\def\constr#1^#2{\mathrel{\mathop{\kern 0pt#1}\limits^{#2}}}
\def\build#1_#2{\mathrel{\mathop{\kern 0pt#1}\limits_{#2}}}
\theoremstyle{plain}
\newtheorem{theorem}{Theorem}[section]
\newtheorem{lemma}{Lemma}[section]
\newtheorem{proposition}{Proposition}[section]
\theoremstyle{definition}
\theoremstyle{remark}
\newtheorem{remark}{\bf Remark}[section]
\numberwithin{equation}{section}
\newcommand{\nsubseteq}{\nsubseteq}
\newcommand{\R}{\mathbb{R}}
\newcommand{\N}{\mathbb{N}}
\newcommand{\ds}{\displaystyle}
\newcommand{\be}{\begin{equation}}
\newcommand{\ee}{\end{equation}}
\begin{document}
%%%%%%%%%%%%%%%%%%%%%%%%%%%%%%%%%%%%%%%%%%%%%%%%%%%%%%%%%%%%%%%%%%%%%%%%%%%%%%%%%%%%%%%%%%%%%%%
\title[Abstract thermoelastic systems with
delay]
{Well-posedness and stability of abstract thermoelastic delayed systems}
\thispagestyle{empty}

\author{Ka\"{\i}s Ammari}
\address{LR Analysis and Control of PDEs, LR 22ES03,
Department of Mathematics, Faculty of Sciences of Mona-
stir, University of Monastir, 5019 Monastir, Tunisia}
\email{kais.ammari@fsm.rnu.tn}

\author{Makrem Salhi}
\address{LR Analysis and Control of PDEs, LR 22ES03, Department of Mathematics, Faculty of
Sciences of Sfax, University of Sfax, Tunisia}
\email{makram.salhi@ipeis.usf.tn}

\author{Farhat Shel}
\address{LR Analysis and Control of PDEs, LR 22ES03,
Department of Mathematics, Faculty of Sciences of
Monastir, University of Monastir, 5019 Monastir, Tunisia}
\email{farhat.shel@fsm.rnu.tn}

\date{}

\begin{abstract} In this paper, we consider a
stabilization problem of a generalized thermoelastic system (the so called $\alpha$-$\beta$ system) with delay in a part of the coupled system. For each case, we prove the well-posedness of the corresponding system using semigroup approach, then under some sufficient conditions we establish some results of exponential and polynomial stability of the system through a frequency-domain approach. The results are applied to concrete examples in thermoelasticity.
\end{abstract}

\keywords{Coupled system, Thermoelastic plate system, Kelvin-voigt damping, delay, well posedness, exponential
stability, polynomial stability}
\subjclass[2020] {35B35, 35B40, 35L90, 47D06, 93D20}

\maketitle

\tableofcontents
%\vfill\break

\section{Introduction}

Let $H$ be a Hilbert space equipped with an inner product
$(.,.)_{H}$ and a norm $\|\;\|_{H}$, $A:D(A)\subset H\rightarrow H$  a self-adjoint, strictly positive (unbounded) operator and $(\alpha,\beta)\in[0,1]\times[0,1]$. We consider  the following two Cauchy problems modeled, respectively, by the abstract thermoelastic systems with delay given by
 \begin{equation}\label{s0}
\left\{
\begin{array}{ll}
u''(t)+ Au(t-\tau)+aAu'(t)-A^{\beta}\theta(t)=0, & \quad t\in(0,+\infty), \\
\theta'(t)+A^{\alpha}\theta(t)+A^{\beta}u'(t)=0, & \quad t\in(0,+\infty), \\
u(0)=u_{0}, u'(0)=u_{1}, \theta(0)=\theta_{0}, &  \\
\ds A^{1/2}u(t-\tau)=\phi(t-\tau), & \quad t\in(0,\tau),
\end{array}
\right.
\end{equation}
and
\begin{equation}\label{s1}
\left\{
\begin{array}{ll}
u''(t)+Au(t)-A^{\beta}\theta(t)=0, & \quad t\in(0,+\infty), \\
\theta'(t)+\kappa A^\alpha\theta(t-\tau)+aA^\alpha\theta(t)
+A^{\beta}u'(t)=0, & \quad t\in(0,+\infty), \\
u(0)=u_{0}, u'(0)=u_{1}, \theta(0)=\theta_{0}, &  \\
\ds A^{\alpha/2} 
\theta (t-\tau) = \psi (t-\tau), & \quad t \in (0,\tau).
\end{array}
\right.
\end{equation}
where $\tau>0$ is a constant time delay,  $a>0$ and $\kappa>0$. We are interested in the well-posedness of systems (\ref{s0}) and (\ref{s1}) and the study of the asymptotic behavior of solutions $u,\theta:[0,\infty)\rightarrow H$ as $t\rightarrow\infty$, for $(\beta,\alpha)$ in the region $Q$ where $Q$ is given by $$Q:=\{(\beta,\alpha)\in[0,1]\times [0,1]\mid 2\beta-\alpha\leq 1\} \quad \text{(Figure \ref{QQ})}.$$
%\begin{figure}[th]
%\centering
%\includegraphics[width=7cm, keepaspectratio =true]{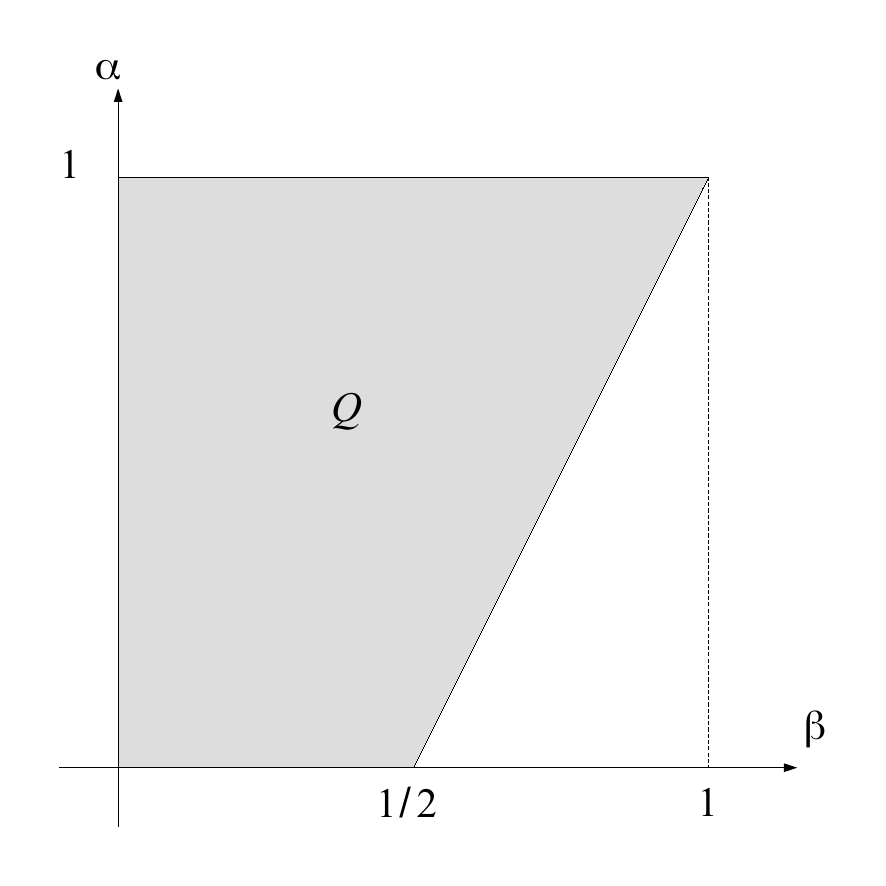}
%\caption{Region $Q$}
%\label{QQ}
%\end{figure}

%Second, we consider a similar abstract thermoelastic system where the delay $\tau$ is given in the equation corresponding to temperature,

%where $\kappa>0$ is a constant. Here, we deal with the well-posedness of the system (\ref{s1}) in the region $Q$ and its stability for $(\beta,\alpha)\in S$ where the region $S$ is given by:
%$$S:=\{ (\beta,\alpha)\in[0,1]\times[0,1]\mid \max\{1-2\beta, 2\beta-1\} \leq \alpha \leq 2\beta\} \quad \text{(Figure \ref{SSS})}.$$
\begin{figure}[th]
\centering
\includegraphics[width=7cm, keepaspectratio =true]{Q}
\caption{Region $Q$}
\label{QQ}
\end{figure}

%\begin{figure}[tbp]
%\centering
%\begin{minipage}[t]{7cm}
%\centering
%\includegraphics[width=7cm, keepaspectratio =true]{QQ.pdf}
%\caption{Region $Q$}
%\label{QQ}
%\hfill
%\end{minipage}
%\begin{minipage}[t]{7cm}
%\centering
%\includegraphics[width=7cm, keepaspectratio =true]{}
%\caption{Region $S$}
%\label{SSS}
%\end{minipage}
%\end{figure}

\medskip

Note that the regions of the couple $(\beta,\alpha)$ considered for the two delayed $\alpha,\beta$ systems (\ref{s0}) and (\ref{s1}) cover some concrete examples given in the literature (in the non-delayed form), such as, the thermoelastic plate equations (with $\alpha=\beta=1/2$ and $A=\Delta^2$), which is widely discussed since some decades (see e.g. \cite{Kim92, LiZh93, RiRa96,LiZh97, AvLa97, LiZh99}). The choice $H= L^2(0,L)$, with $L>0$, $A=-\frac{\partial^4}{\partial x^4}$,  $\alpha=\frac{1}{2}$ and $\beta=0$ corresponds to a one dimensional thermoelastic beam and it was proved that the corresponding system is polynomially stable of order $1$ for various boundary conditions (\cite{HaLi13, Naf23, Naf20}). 

\medskip

In recent years, several research has been dealing with delayed equations. In fact, delay effects arise in so
many applications and physical problems, see e.g. \cite{SuBi80, ADB93} and  \cite{Benhassi, Ammari, Ammari1, Ammari2, Pruss, DS, Batkai}, but may be source of some instabilities (\cite{DLP86, Dat88, Dat97, Dreher, NiPi06, Jordan, Rac12}).

\medskip

In order to ensure well-posedness or stability of a delayed system, many ideas were recently deployed: one can add a non-delay term (see \cite{Pruss} for example). A delayed system can also be stabilized using standard feedback compensating the
destabilizing delay effect
(see e.g. \cite{Ammari1, Benhassi, NiPi06, NiPi08, NPV11, NiPi18, Messaoudi}).
Furthermore, recent papers shoes that, under a particular
choice of the time delay, we may restitute the exponential stability property
(see e.g. \cite{Ammari}, \cite{Ammari2}, \cite{Gugat}).

\medskip

In \cite{Rac12}, Racke considered the  $\alpha$-$\beta$ systems with delay
\begin{equation}\label{s0'}
\left\{
\begin{array}{ll}
u''(t)+ Au(t-\tau)-A^{\beta}\theta(t)=0, & \quad t\in(0,+\infty), \\
\theta'(t)+A^{\alpha}\theta(t)+A^{\beta}u'(t)=0, & \quad t\in(0,+\infty),
\end{array}
\right.
\end{equation}
and
\begin{equation}\label{s1'}
\left\{
\begin{array}{ll}
u''(t)+Au(t)-A^{\beta}\theta(t)=0, & \quad t\in(0,+\infty), \\
\theta'(t)+\kappa A^\alpha\theta(t-\tau)
+A^{\beta}u'(t)=0, & \quad t\in(0,+\infty),
\end{array}
\right.
\end{equation}
with some initial and boundary conditions.
 He proved that under the hypothesis that the operator $A$ has a countable complete orthonormal system of eigenfunctions $(e_j)$ with corresponding eigenvalues $0<\lambda_j\rightarrow\infty$ as  $j\rightarrow \infty$, systems (\ref{s0'}) and (\ref{s1'}) are respectively unstable (not well-posed in the sense of Hadamard) at least in the regions
$$\mathcal{A}_1:=\{ (\beta,\alpha)\mid 0 \leq \beta \leq \alpha \leq 1,\;\alpha\geq \frac{1}{2},\;(\beta,\alpha)\neq (1,1)\} $$
and
$$\mathcal{A}_2:= \{ (\beta,\alpha)\mid 0 \leq \beta\leq \alpha \leq 1,\;(\beta,\alpha)\neq (1,1)\}.$$
\begin{figure}[tbp]
\centering
\begin{minipage}[t]{7cm}
\centering
\includegraphics[width=7cm, keepaspectratio =true]{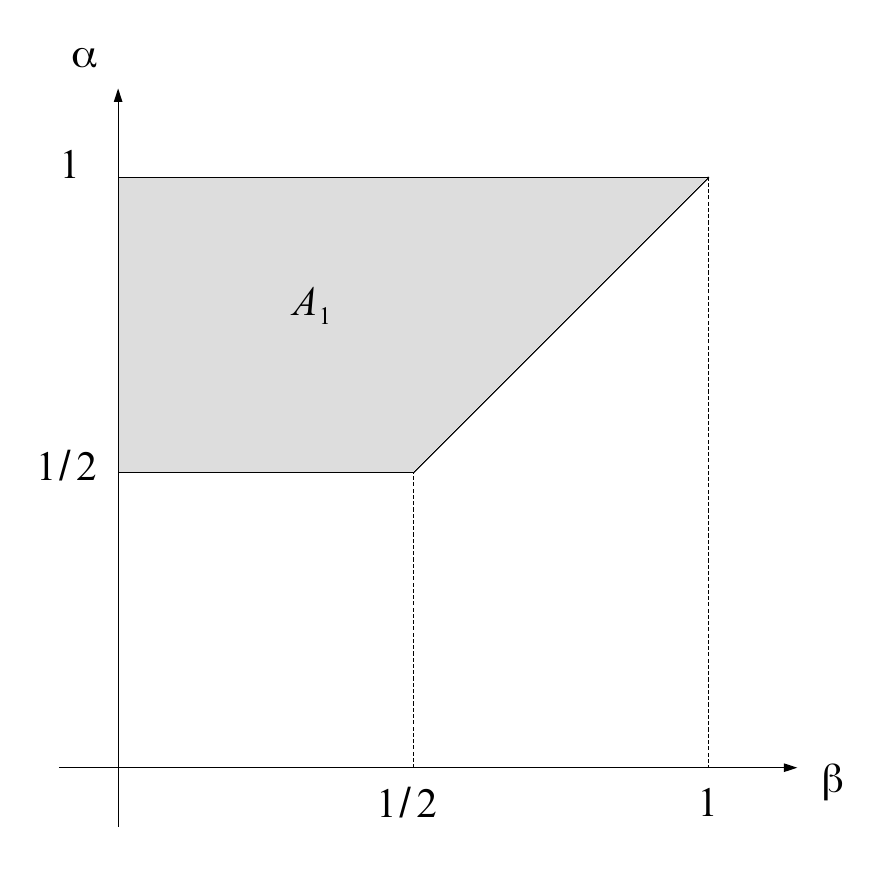}
\caption{Area of instability $A_1$ (with delay and no damping)}
\label{A1}
\hfill
\end{minipage}
\begin{minipage}[t]{7cm}
\centering
\includegraphics[width=7cm, keepaspectratio =true]{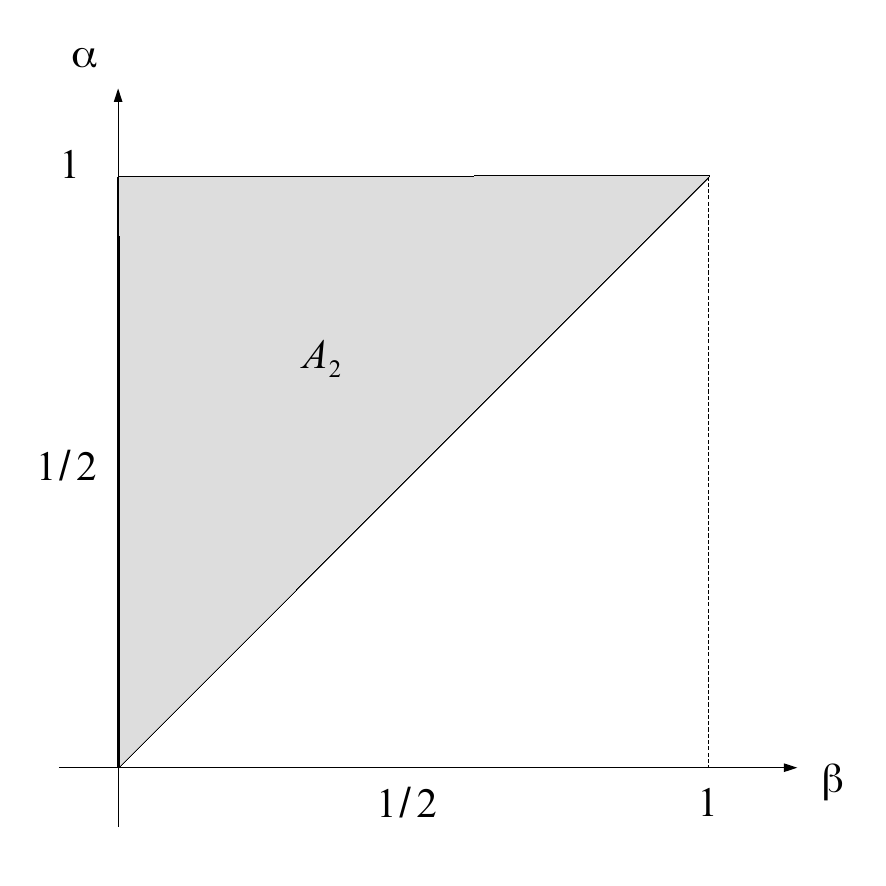}
\caption{Area of instability $A_2$ (with delay and no damping)}
\label{A2}
\end{minipage}
\end{figure}
However, it is well known that the $\alpha$-$\beta$ system without delay is well posed in the whole region $[0,1]\times[0,1]$. Moreover, in \cite{ABB99} F. A. Khodja et \textit{al.} identified the region of exponential stability to be
$$S:=\{ (\beta,\alpha)\in[0,1]\times[0,1]\mid \max\{1-2\beta, 2\beta-1\} \leq \alpha \leq 2\beta\}.$$
Afterwards Hao and Liu \cite{HaLi13} (see also \cite{ALS23, ASbook, AHbook}) give a classification of the regions of stability and instability (Figure \ref{SS}). They showed that the region of polynomial stability is given by 
$R=S_1\cup S_2$
where 
$$ 
S_1 :=\{ (\beta,\alpha)\in[0,1]\times[0,1] \mid  \max{\left(\frac{1}{2},2\beta \right)} < \alpha \}, \, S_2 := \{(\beta,\alpha)\in[0,1]\times[0,1] \mid 
\alpha < 1 - 2\beta, \alpha \leq \frac{1}{2}\}$$
and the region of instability is given by
$$S_3:=\{ (\beta,\alpha)\in[0,1]\times[0,1]\mid  0< \alpha < 2\beta-1\}.$$
Note that, the region of stability is then, $\fbox{$S\cup S_1\cup S_2=Q$}.$

\begin{figure}[th]
\centering
\includegraphics[width=7cm, keepaspectratio =true]{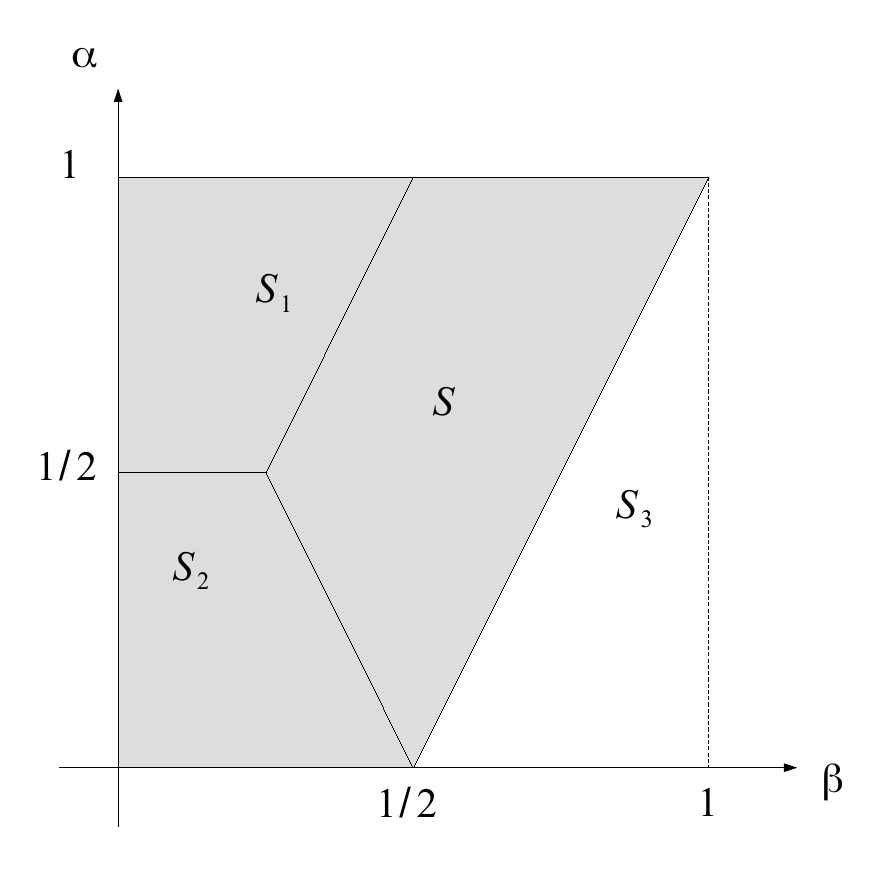}
\caption{Region of stability and region of instability (no delay \& no damping)}
\label{SS}
\end{figure}

Recently K. Ammari et \textit{al.} give more stability results taking into account the irregularity of the associated operator at the origin  \cite{ALS23}. See also  \cite{HLY15} for a complete study of the regularity of solutions of the $\alpha$-$\beta$ system, see \cite{SLR19, Naf23} and some references therein for recent works about stability of abstract thermoelastic systems.

\medskip

Besides, Racke considered in the same work \cite{Rac12} the one dimensional delayed thermoelastic systems,
\begin{equation}
\left\{ 
\begin{tabular}{l}
$u_{tt}(x,t)-\alpha u_{xx}(x,t-\tau)+\gamma \theta _x(x,t)=0,\;\;\;\;\;\;\text{ in }(0,\ell )\times (0,\infty ),$ \\
$\theta_{t}(x,t)-\kappa\theta_{xx}(x,t)+\gamma
u_{xt}(x,t)=0,\;\;\;\;\;\;\;\;\;\;\;\;\;\text{ in }(0,\ell)\times (0,\infty ),$
\end{tabular}
\right.  \label{d1.0.0}
\end{equation}
and
\begin{equation}
\left\{ 
\begin{tabular}{l}
$u_{tt}(x,t)-\alpha u_{xx}(x,t)+\gamma \theta _x(x,t)=0,\;\;\;\;\;\;\text{ in }(0,\ell )\times (0,\infty ),$ \\
$\theta_{t}(x,t)-\kappa\theta_{xx}(x,t-\tau)+\gamma
u_{xt}(x,t)=0,\;\;\;\;\;\;\;\;\;\;\;\;\;\text{ in }(0,\ell)\times (0,\infty).$
\end{tabular}
\right.  \label{d2.0.0}
\end{equation}
He proved also that theses systems are ill-posed (even if $\tau$ is small). However, we know that in the absence of delay, the classical thermoelastic system in one dimension corresponding to (\ref{d1.0.0}) or (\ref{d2.0.0}) with $\tau=0$, is well-posed and even exponentially stable (see e.g. \cite{Rac02, LiZh99}).

\medskip

In order to restitute the well-posedness character and the stability of the one dimensional thermoelastic system (\ref{d1.0.0}) Khatir and Shel \cite{KhSh21} added a Kelvin-Voight damping $-a u_{xxt}(x,t)$ to the instable system part and proved that it is not only well-posed (in an appropriate Hilbert space) but also exponentially stable for $a>a_0$ (for some positive number $a_0$. Rather, Mustapha and Kafini in\cite{MuKa13} added to the delayed equation in  (\ref{d2.0.0}) a non delayed term $-a\theta_{xx}(x,t)$ and proved a result of exponential stability under the condition $\kappa<a$.

\medskip

Even if systems (\ref{d1.0.0}) and (\ref{d2.0.0}) are not direct examples of (\ref{s0'}) and (\ref{s1'}), we try in this work, to generalize such procedure to (\ref{s0'}) and (\ref{s1'}) by adding a Kelvin-Voigt damping term $a \, Au^\prime (t)$ to the delayed equation in (\ref{s0'}) to obtain system (\ref{s0}), and a damping term $aA^{\alpha}\theta(t)$ in the second equation in (\ref{s1'}) to obtain system (\ref{s1}). We will prove in particular that, not only the systems (\ref{s0}) is well-posed (in an appropriate Hilbert space), but also, the associated semigroup is exponentially stable for $(\beta,\alpha)$ in the region $Q$. In particular the presence of the damping terms $a \, Au^\prime (t)$  provides damped systems with better stability than that acquired for $\tau=0$ at least in the regions $S_1$ and $S_2$. For system (\ref{s1}), we will prove that the presence of the damping term $a \, A^{\alpha}\theta(t)$ preserves well-posedness in  region $Q$, exponential stability in region $S$, and polynomial stability in region $S_1\cup S_2$,  acquired for $\tau=0$.

\medskip

Finally, to cover more classical thermoelastic systems, such as, the delayed systems of linear second order-thermoelasticity in one space dimension (\ref{d1.0.0}) and (\ref{d2.0.0}), we will consider in this work more general abstract thermoelastic systems with delay of the form
\begin{equation}\label{abs1}
\left\{
\begin{array}{ll}
u''(t)+ Au(t-\tau)+aAu'(t)-C\theta(t)=0, & \quad t\in(0,+\infty), \\
\theta'(t)+A^{\alpha}\theta(t)+C^{*}u'(t)=0, & \quad t\in(0,+\infty), \\
u(0)=u_{0}, u'(0)=u_{1}, \theta(0)=\theta_{0}, &  \\
\ds B^{*}u(t-\tau)=\phi(t-\tau), & \quad t\in(0,\tau),
\end{array}
\right.
\end{equation}
and 
\begin{equation}\label{abs2}
\left\{
\begin{array}{ll}
u''(t)+ Au(t)-C\theta(t)=0, & \quad t\in(0,+\infty), \\
\theta'(t)+\kappa A^\alpha \theta(t-\tau)+aA^\alpha \theta(t)+C^{*}u'(t)=0, & \quad t\in(0,+\infty), \\
u(0)=u_{0}, u'(0)=u_{1}, \theta(0)=\theta_{0}, &  \\
\ds B^{*}\theta(t-\tau)=g_0(t-\tau), & \quad t\in(0,\tau),
\end{array}
\right.
\end{equation}
where $B:D(B)\subset H\rightarrow H$, $C:D(C)\subset H\rightarrow H$ are closed densely defined operators on $H$ satisfying some properties for each system, $B^*$ is the adjoint of $B$, and  with $A = BB^*$ in (\ref{abs1}) and $A^\alpha = BB^*$ in (\ref{abs2}). The corresponding non delayed system (where $\tau=0$) was studied before in a more general form (by taking an operator $A_1$ instead of $A^\alpha$), see e.g. \cite{GRT92, RiRa95, LiLi97}) and \cite{AmBe00} and some references therein.  We will give sufficient conditions on $A^\alpha$, $B$ and $C$ which ensure the exponential (or polynomial) stability of the associated semigroup, and which can be compared to those proposed in \cite{AmBe00} for $\tau=0$. 

\medskip

The paper is organized as follows. The second and third sections deals with the well-posedness and stability
of the problems (\ref{s0}) and (\ref{s1}) respectively. We end each section by a general abstract system and applications.

\medskip

In the sequel,  $\fbox{$(\beta,\alpha)\in Q:=\{(\beta,\alpha)\in[0,1]\times [0,1]\mid 2\beta-\alpha\leq 1\}$}$.

\section{Well-posedness and stability of the problem (\ref{s0})}

\subsection{Well-posedness of the coupled system (\ref{s0})}\label{sec21} We introduce, as in \cite{Ammari}, the auxiliary variable
$$\ds z(\rho,t)=A^{1/2}u(t-\tau\rho),\quad\rho\in(0,1), t>0.$$
Then, problem (\ref{s1}) is equivalent to

\begin{equation*}\label{s2}
\ds u''(t)+A
^{1/2}z(1,t)+aAu'(t)-A^{\beta}\theta(t)=0,\quad t>0,
\end{equation*}

\begin{equation*}\label{s3}
\theta'(t)+A^{\alpha}\theta(t)+A^{\beta}u'(t)=0,\quad t>0,
\end{equation*}

\begin{equation*}\label{s4}
\tau z_{t}(\rho,t)+z_{\rho}(\rho,t)=0,\quad (\rho,t)\in(0,1)\times(0,+\infty),
\end{equation*}

\begin{equation*}
u(0)=u_{0}, u'(0)=u_{1}, \theta(0)=\theta_{0},
\end{equation*}

\begin{equation*}\label{s5}
z(\rho,0)=\phi(-\tau\rho),\quad \rho\in(0,1),
\end{equation*}

\begin{equation*}\label{s6}
\ds z(0,t)=A^{1/2}u(t),\quad t>0.
\end{equation*}

\noindent Define
$$U=(u,u',\theta,z)^{\top},$$
Then, problem (\ref{s0}) can be formulated as a first order system of the form

\begin{equation}\label{s7}
\left\{
\begin{array}{ll}
U^\prime (t)=\mathcal{A}_{\alpha,\beta}U(t), \, t > 0, & \\
\ds 
U(0)=\big(u_{0},u_{1},\theta_{0},\phi( - \tau\cdot)\big)^{\top},
\end{array}
\right.
\end{equation}
where the operator $\mathcal{A}_{\alpha,\beta}$ is defined by
$$\mathcal{A}_{\alpha,\beta}
\left(
\begin{array}{c}
u \\
v \\
\theta \\
z \\
\end{array}
\right)=
\left(
\begin{array}{c}
v \\
\ds-A^{1/2}\big(z(1)+aA^{1/2}v-A^{\beta-\frac{1}{2}}\theta\big) \\
-A^{\alpha/2}\left( A^{\alpha/2}\theta+A^{\beta-\frac{\alpha}{2}}v\right)  \\
\ds-\frac{1}{\tau}z_{\rho} \\
  \end{array}
\right),$$
with domain
$$D(\mathcal{A}_{\alpha,\beta})=\left\{
\begin{array}{c}
(u,v,\theta,z)^{\top}\in D(A^{1/2})\times D(A^{1/2})
\times D(A^{\alpha/2})\times H^{1}\big((0,1),H\big): \quad  z(0)=A^{1/2}u, \\
\ds z(1)+aA^{1/2}v-A^{\beta-\frac{1}{2}}\theta \in D(A^\frac{1}{2}), \quad\textrm{ and }\quad A^{\alpha/2}\theta+A^{\beta-\frac{\alpha}{2}}v\in D(A^{\alpha/2})
\end{array}
\right\},$$
in the Hilbert space
$$\mathcal{H}=D(A^{1/2})\times H\times H\times L^{2}\big((0,1),H\big),$$
equipped with the scalar product
$$\ds \big((u,v,\theta,z)^{\top},(u_1,v_1,\theta_1,z_1)^{\top}\big)_{\mathcal{H}}
=\big(A^{1/2}u,A^{1/2}u_1\big)_{H}
+(v,v_1)_{H}+(\theta,\theta_1)_{H}+\xi\int_{0}^{1}(z,z_1)_{H}d\rho,$$
where $\xi > 0$ is a parameter that will be fixed later on.

\begin{remark}If $(\beta,\alpha)\in [0,1/2]\times[0,1]$ and $\beta \leq \alpha$, then $A^{\beta-\frac{\alpha}{2}}v\in D(A^{\alpha/2})$, hence $\mathcal{A}_{\alpha,\beta}$ is reduced to
$$\mathcal{A}_{\alpha,\beta}
\left(
\begin{array}{c}
u \\
v \\
\theta \\
z \\
\end{array}
\right)=
\left(
\begin{array}{c}
v \\
\ds-A^{1/2}\big(z(1)+aA^{1/2}v\big)+A^{\beta}\theta \\
- A^{\alpha}\theta-A^{\beta}v  \\
\ds-\frac{1}{\tau}z_{\rho} \\
  \end{array}
\right),$$
with domain
$$D(\mathcal{A}_{\alpha,\beta})=\left\{
\begin{array}{c}
(u,v,\theta,z)^{\top}\in D(A^{1/2})\times D(A^{1/2})
\times D(A^{\alpha})\times H^{1}\big((0,1),H\big): \\
\ds z(0)=A^{1/2}u,\quad\textrm{ and }\quad z(1)+aA^{1/2}v \in D(A^\frac{1}{2})
\end{array}
\right\}.$$

\end{remark}
\begin{lemma}\rm\label{lemma21}
If $\ds\xi\geq \frac{2\tau}{a}$, then
$\mathcal{A}_{\beta,\alpha}-mId$ is dissipative in $\mathcal{H}$, where $m = \frac{1}{a} + \frac{\xi}{2\tau}$.
\end{lemma}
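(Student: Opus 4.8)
The plan is to verify the dissipativity inequality directly: I must show that $\mathrm{Re}\big(\mathcal{A}_{\alpha,\beta}U,U\big)_{\mathcal{H}}\le m\,\|U\|_{\mathcal{H}}^2$ for every $U=(u,v,\theta,z)^{\top}\in D(\mathcal{A}_{\alpha,\beta})$, which is equivalent to $\mathrm{Re}\big((\mathcal{A}_{\alpha,\beta}-mId)U,U\big)_{\mathcal{H}}\le 0$. First I would expand $(\mathcal{A}_{\alpha,\beta}U,U)_{\mathcal{H}}$ against the four pieces of the scalar product. The $D(A^{1/2})$ part of $U$ pairs the first line $v$ of $\mathcal{A}_{\alpha,\beta}U$ with $u$ to give $(A^{1/2}v,A^{1/2}u)_H$; the first $H$ part pairs the second line with $v$, giving, after moving $A^{1/2}$ by self-adjointness, $-(z(1),A^{1/2}v)_H-a\|A^{1/2}v\|_H^2+(A^{\beta}\theta,v)_H$; the second $H$ part pairs the third line with $\theta$, giving $-\|A^{\alpha/2}\theta\|_H^2-(A^{\beta}v,\theta)_H$; and the transport part contributes $-\frac{\xi}{\tau}\int_0^1(z_\rho,z)_H\,d\rho$.

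The first key step is that the thermoelastic coupling cancels: since $A$ is self-adjoint, $(A^{\beta}v,\theta)_H=\overline{(A^{\beta}\theta,v)_H}$, so the two cross terms $+(A^{\beta}\theta,v)_H$ and $-(A^{\beta}v,\theta)_H$ have equal real part and drop out. The second key step is the integration by parts in the delay variable, $\mathrm{Re}\int_0^1(z_\rho,z)_H\,d\rho=\frac{1}{2}\big(\|z(1)\|_H^2-\|z(0)\|_H^2\big)$, after which the boundary constraint $z(0)=A^{1/2}u$ built into $D(\mathcal{A}_{\alpha,\beta})$ lets me replace $\|z(0)\|_H^2$ by $\|A^{1/2}u\|_H^2$. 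Collecting the surviving real parts yields
\begin{equation*}
\mathrm{Re}(\mathcal{A}_{\alpha,\beta}U,U)_{\mathcal{H}}=\mathrm{Re}\big(A^{1/2}v,\,A^{1/2}u-z(1)\big)_H-a\|A^{1/2}v\|_H^2-\|A^{\alpha/2}\theta\|_H^2-\frac{\xi}{2\tau}\|z(1)\|_H^2+\frac{\xi}{2\tau}\|A^{1/2}u\|_H^2 .
\end{equation*}

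The remaining step is to absorb the indefinite term via Young's inequality, balanced against the damping term $-a\|A^{1/2}v\|_H^2$: I would write $\mathrm{Re}\big(A^{1/2}v,A^{1/2}u-z(1)\big)_H\le\frac{a}{2}\|A^{1/2}v\|_H^2+\frac{1}{2a}\|A^{1/2}u-z(1)\|_H^2$ and then $\frac{1}{2a}\|A^{1/2}u-z(1)\|_H^2\le\frac{1}{a}\|A^{1/2}u\|_H^2+\frac{1}{a}\|z(1)\|_H^2$. Substituting and regrouping gives
\begin{equation*}
\mathrm{Re}(\mathcal{A}_{\alpha,\beta}U,U)_{\mathcal{H}}\le-\frac{a}{2}\|A^{1/2}v\|_H^2-\|A^{\alpha/2}\theta\|_H^2+\Big(\frac{1}{a}-\frac{\xi}{2\tau}\Big)\|z(1)\|_H^2+\Big(\frac{1}{a}+\frac{\xi}{2\tau}\Big)\|A^{1/2}u\|_H^2 .
\end{equation*}

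This is where the hypothesis $\xi\ge 2\tau/a$ enters: it forces the coefficient $\frac{1}{a}-\frac{\xi}{2\tau}$ of $\|z(1)\|_H^2$ to be nonpositive, so that together with the two manifestly nonpositive terms everything is discarded except $\big(\frac{1}{a}+\frac{\xi}{2\tau}\big)\|A^{1/2}u\|_H^2=m\|A^{1/2}u\|_H^2\le m\|U\|_{\mathcal{H}}^2$, which is exactly dissipativity of $\mathcal{A}_{\alpha,\beta}-mId$. I do not anticipate a genuine obstacle; the only points needing care are the bookkeeping of the fractional powers of $A$ in the coupling cancellation (so that the shifts $A^{\beta-1/2}$ and $A^{\beta-\alpha/2}$ act on elements lying in the right domains, which the definition of $D(\mathcal{A}_{\alpha,\beta})$ guarantees) and choosing the Young splitting so that the two surviving constants are precisely $m$ and $\frac{1}{a}-\frac{\xi}{2\tau}$, the latter sign being governed exactly by the assumed lower bound on $\xi$.
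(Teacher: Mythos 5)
Your proposal is correct and follows essentially the same route as the paper: the same expansion of $\mathrm{Re}(\mathcal{A}_{\alpha,\beta}U,U)_{\mathcal{H}}$ with cancellation of the thermoelastic coupling terms, the same integration by parts in $\rho$ using $z(0)=A^{1/2}u$, and the same Young-type absorption against $-a\|A^{1/2}v\|_H^2$, arriving at exactly the paper's final inequality with coefficients $-\frac{a}{2}$, $\frac{1}{a}-\frac{\xi}{2\tau}$ and $m=\frac{1}{a}+\frac{\xi}{2\tau}$. The only cosmetic difference is that the paper runs Young's inequality with a free parameter $\varepsilon$ and then sets $\varepsilon=a/2$, whereas you combine the two cross terms first and split with weight $a$ directly; the constants and conclusion are identical.
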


\begin{proof}
Take $U=(u,v,\theta,z)^{\top}\in D(\mathcal{A}_{\alpha,\beta})$.
We have
\begin{eqnarray*}
% \nonumber to remove numbering (before each equation)
\ds\big(\mathcal{A}_{\alpha,\beta}U,U\big)_{\mathcal{H}}&=&
\big(A^{1/2}v,A^{1/2}u\big)_{H}-\big(z(1)
+aA^{1/2}v,A^{1/2}v\big)_{H}
+(A^{\beta-\frac{1}{2}}\theta,A^{\frac{1}{2}}v)_{H}-(A^{\alpha/2}\theta,A^{\alpha/2}\theta)_{H}\\&-&(A^{\frac{1}{2}}v,A^{\beta-\frac{1}{2}}\theta)_{H}
-\frac{\xi}{\tau}\int_{0}^{1}(z_{\rho},z)_{H}d\rho.
\end{eqnarray*}
Using the Young's inequality and that $\ds z(0)=A^{1/2}u$,
we find that, for every $\varepsilon>0$,
\begin{eqnarray*}
% \nonumber to remove numbering (before each equation)
\ds Re\big(\mathcal{A}_{\alpha,\beta}U,U\big)_{\mathcal{H}}&=&
\big(A^{1/2}v,A^{1/2}u\big)_{H}-\big(z(1),A^{1/2}v\big)_{H}
-a\|A^{1/2}v\|^{2}_{H}-\|A^{\frac{\alpha}{2}}\theta\|^{2}_{H} \\
&-&\frac{\xi}{2\tau}\|z(1)\|^{2}_{H}+\frac{\xi}{2\tau}\|A^{1/2}u\|^{2}_{H} \\
&\leq&(\varepsilon-a)\|A^{1/2}v\|_{H}^{2}+\left(\frac{1}{2\varepsilon}+
\frac{\xi}{2\tau}\right)\|A^{1/2}u\|_{H}^{2}-\|A^{\frac{\alpha}{2}}\theta\|_{H}^{2}
+\left(\frac{1}{2\varepsilon}-\frac{\xi}{2\tau}\right)\|z(1)\|^{2}_{H}.
\end{eqnarray*}
Choosing $\ds\varepsilon=\frac{a}{2}$, we get
$$\ds Re\big(\mathcal{A}_{\alpha,\beta}U,U\big)_{\mathcal{H}}\leq
-\frac{a}{2}\|A^{1/2}v\|_{H}^{2}
+\left(\frac{1}{a}+\frac{\xi}{2\tau}\right)
\|A^{1/2}u\|_{H}^{2}-\|A^{\frac{\alpha}{2}}\theta\|_{H}^{2}
+\left(\frac{1}{a}-\frac{\xi}{2\tau}\right)\|z(1)\|^{2}_{H}.$$
Then, we choose $\xi$ such that $\ds\frac{1}{a}-\frac{\xi}{2\tau}\leq 0$, that
is $\ds\xi\geq\frac{2\tau}{a}$. Furthermore,
we take $\ds m=\frac{1}{a}+\frac{\xi}{2\tau}$
which leads to
\begin{equation}\label{s8}
\ds Re\big(\mathcal{A}_{\alpha,\beta}U,U\big)_{\mathcal{H}}\leq
-\frac{a}{2}\|A^{1/2}v\|_{H}^{2}
+\left(\frac{1}{a}-\frac{\xi}{2\tau}\right)
\|z(1)\|^{2}_{H}-\|A^{\frac{\alpha}{2}}\theta\|_{H}^{2}
+m\|A^{1/2}u\|_{H}^{2}.
\end{equation}
This readily shows the dissipativeness of the operator
$\mathcal{A}_{\alpha,\beta}-mId$. 
\end{proof}

In the sequel, denote by 
\begin{equation*}
\mathbb{C}_0:=\big\{\lambda\in\mathbb{C}\mid\textrm{Re}(\lambda)>0\big\},\quad \text{and}\quad \overline{\mathbb{C}}_0:=\big\{\lambda\in\mathbb{C}\mid\textrm{Re}(\lambda)\geq 0\big\}
\end{equation*}
\begin{lemma}\rm\label{lemma21"}
Assume that $a\geq \tau$. Then 
$$\overline{\mathbb{C}}_0\subset \rho(\mathcal{A}_{\alpha,\beta})$$
\end{lemma}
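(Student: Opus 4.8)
The plan is to prove that, for every $\lambda$ with $\mathrm{Re}\,\lambda\ge 0$, the operator $\lambda I-\mathcal{A}_{\alpha,\beta}$ is a bijection from $D(\mathcal{A}_{\alpha,\beta})$ onto $\mathcal{H}$ with bounded inverse, by diagonalizing the resolvent equation over the eigenbasis of $A$. Let $(e_j)_j$ be the orthonormal eigenvectors of $A$, $A e_j=\lambda_j e_j$ with $0<\lambda_j\to\infty$. Given $F=(f_1,f_2,f_3,f_4)^\top\in\mathcal{H}$, I would solve $(\lambda I-\mathcal{A}_{\alpha,\beta})U=F$ with $U=(u,v,\theta,z)^\top$ by successive elimination: first $v=\lambda u-f_1$; next the transport equation $\lambda z+\tau^{-1}z_\rho=f_4$ with the boundary relation $z(0)=A^{1/2}u$ integrates explicitly to $z(\rho)=e^{-\lambda\tau\rho}A^{1/2}u+\tau\int_0^\rho e^{-\lambda\tau(\rho-s)}f_4(s)\,ds$, so that $A^{1/2}z(1)=e^{-\lambda\tau}Au+\text{(data)}$; and $\theta=(\lambda+A^\alpha)^{-1}\big(f_3+A^\beta f_1-\lambda A^\beta u\big)$, which is legitimate since $\mathrm{Re}\,\lambda\ge0$ and $A^\alpha\ge0$ make $\lambda+A^\alpha$ boundedly invertible. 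Substituting into the second equation collapses the system, mode by mode, to the scalar equation $p_j(\lambda)\,u_j=(\text{data})_j$, where
$$p_j(\lambda)=\lambda^2+a\lambda_j\lambda+\lambda_j e^{-\lambda\tau}+\frac{\lambda\,\lambda_j^{2\beta}}{\lambda+\lambda_j^\alpha}.$$
Everything reduces to showing that $p_j(\lambda)\ne0$ for all $j$ and all $\mathrm{Re}\,\lambda\ge0$, with a lower bound strong enough to reconstruct $U\in D(\mathcal{A}_{\alpha,\beta})$ depending boundedly on $F$.

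The core of the argument is the non-vanishing of $p_j$, and this is where the hypothesis $a\ge\tau$ enters. I would compare $p_j$ with the undelayed symbol $\widetilde p_j(\lambda):=\lambda^2+a\lambda_j\lambda+\lambda_j+\frac{\lambda\,\lambda_j^{2\beta}}{\lambda+\lambda_j^\alpha}$, obtained by replacing $e^{-\lambda\tau}$ by $1$. For $\lambda=x+iy$ with $x\ge0$, a direct computation gives
$$\mathrm{Re}\big(\overline{\lambda}\,\widetilde p_j(\lambda)\big)=x|\lambda|^2+a\lambda_j|\lambda|^2+\lambda_j x+\lambda_j^{2\beta}|\lambda|^2\,\frac{x+\lambda_j^\alpha}{|\lambda+\lambda_j^\alpha|^2},$$
a sum of nonnegative terms, whence $|\widetilde p_j(\lambda)|\ge a\lambda_j|\lambda|$ for $\lambda\ne0$, together with $\widetilde p_j(0)=\lambda_j>0$. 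To transfer this to $p_j$ I would use $p_j-\widetilde p_j=\lambda_j(e^{-\lambda\tau}-1)$ and the elementary estimate $|e^{-\lambda\tau}-1|=|\lambda|\,\big|\int_0^\tau e^{-\lambda s}\,ds\big|\le\tau|\lambda|$, valid on $\mathrm{Re}\,\lambda\ge0$, to obtain
$$|p_j(\lambda)|\ge|\widetilde p_j(\lambda)|-\lambda_j|e^{-\lambda\tau}-1|\ge(a-\tau)\,\lambda_j|\lambda|.$$
Under $a\ge\tau$ this forces $p_j(\lambda)\ne0$ for every $\lambda\ne0$ in the closed right half-plane, while $\lambda=0$ is covered by $p_j(0)=\lambda_j>0$; in the borderline case $a=\tau$ the same conclusion holds because $|e^{-\lambda\tau}-1|<\tau|\lambda|$ is in fact strict for $\lambda\ne0$, $\mathrm{Re}\,\lambda\ge0$. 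This step yields at once the injectivity of $\lambda I-\mathcal{A}_{\alpha,\beta}$ and the solvability of every modal equation.

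It then remains to reassemble $u_j=(\text{data})_j/p_j(\lambda)$ into $U=(u,v,\theta,z)^\top$, to verify that $U\in D(\mathcal{A}_{\alpha,\beta})$, and to establish $\|U\|_{\mathcal{H}}\le C(\lambda)\,\|F\|_{\mathcal{H}}$. The lower bounds $|p_j(\lambda)|\ge a\lambda_j|\lambda|$ (for large $\lambda_j$) and $|p_j(\lambda)|\ge\tfrac12\lambda_j$ (near $\lambda=0$) dominate the powers of $\lambda_j$ carried by the data and by the $D(A^{1/2})$, $D(A^{\alpha/2})$ weights of $\mathcal{H}$, so the reconstructed series converge and define a bounded two-sided inverse of $\lambda I-\mathcal{A}_{\alpha,\beta}$. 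The standing constraint $(\beta,\alpha)\in Q$, that is $2\beta-\alpha\le1$ (equivalently $\beta-\tfrac\alpha2\le\tfrac12$), is exactly what is needed here: it makes $A^{\beta}u$ compatible with the weight $A^{\alpha/2}$, so that the reconstructed $\theta=(\lambda+A^\alpha)^{-1}(\dots-\lambda A^\beta u)$ indeed lies in $D(A^{\alpha/2})$ and the two membership conditions defining $D(\mathcal{A}_{\alpha,\beta})$ hold.

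The main obstacle is this last step: turning the mode-wise non-vanishing of $p_j$ into an honest bounded resolvent requires keeping track of the precise powers of $\lambda_j$ in the data and in the domain conditions, checking that the lower bound on $|p_j(\lambda)|$ is uniform in $j$, and handling the limiting case $a=\tau$ through the strict form of the delay estimate; the eliminations, the explicit transport solution and the invertibility of $\lambda+A^\alpha$ are all routine.
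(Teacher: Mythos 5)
Your core computation is correct and your key estimate is sound: the symbol $p_j(\lambda)$ you obtain by elimination is the right one, the identity for $\mathrm{Re}\big(\overline{\lambda}\,\widetilde p_j(\lambda)\big)$ is a sum of nonnegative terms when $\mathrm{Re}\,\lambda\ge 0$, and the perturbation bound $|e^{-\lambda\tau}-1|\le\tau|\lambda|$ (strict for $\lambda\neq 0$) does force $p_j(\lambda)\neq 0$ under $a\ge\tau$; moreover the strictness constant $c(\lambda):=\tau|\lambda|-|e^{-\lambda\tau}-1|>0$ is independent of $j$, so $|p_j(\lambda)|\ge c(\lambda)\,\lambda_j$ uniformly in $j$, which settles the uniformity worry you flag at the end, including the borderline case $a=\tau$. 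However, there is a genuine gap in generality: your entire argument starts from ``let $(e_j)$ be the orthonormal eigenvectors of $A$ with $0<\lambda_j\to\infty$.'' The paper's standing hypothesis is only that $A$ is self-adjoint and strictly positive; the existence of a complete orthonormal eigenbasis with discrete spectrum is \emph{not} assumed (it appears in the introduction only as part of Racke's hypotheses for his instability results, not as an assumption of this paper). A self-adjoint strictly positive operator can have continuous spectrum, and then there is no modal decomposition to diagonalize over, so as written your proof establishes the lemma only for operators with compact resolvent. The repair is standard but must be said: every operator appearing in your elimination ($A^{1/2}$, $A^{\beta}$, $(\lambda+A^\alpha)^{-1}$, $e^{-\lambda\tau}A^{1/2}$, \dots) is a function of $A$, so by the spectral theorem (multiplication-operator form) the reduced equation reads $p(\lambda,\cdot)(A)u=\mathrm{data}$ with the same symbol $p(\lambda,\mu)$, and your lower bound $|p(\lambda,\mu)|\ge c(\lambda)\,\mu$ for all $\mu\in\sigma(A)\subset(0,\infty)$ inverts it by functional calculus.

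Beyond that, your route is genuinely different from the paper's. The paper never diagonalizes: it eliminates $v$ and $z$ the same way you do, but then treats the coupled $(u,\theta)$ system variationally, building a sesquilinear form $b$ on $\mathcal{F}=D(A^{1/2})\times D(A^{\alpha/2})$ and applying Lax--Milgram, for which everything reduces to the scalar coercivity inequality
\begin{equation*}
\mathrm{Re}\big(\overline{\lambda}e^{-\lambda\tau}\big)+a|\lambda|^2>0\quad\text{for }\mathrm{Re}\,\lambda\ge 0,\ a\ge\tau,
\end{equation*}
which the paper proves by a case analysis on $|\lambda|\lessgtr 1/a$. Your delay estimate is essentially this same inequality in disguise: writing $\mathrm{Re}(\overline{\lambda}e^{-\lambda\tau})+a|\lambda|^2\ge \mathrm{Re}(\overline\lambda)-|\lambda|\,|e^{-\lambda\tau}-1|+a|\lambda|^2\ge(a-\tau)|\lambda|^2$, your integral bound actually gives a cleaner proof of the paper's key inequality than the paper's own argument. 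What the Lax--Milgram route buys is exactly the generality you lose (no spectral assumption on $A$), plus a cheaper conclusion: since $\mathcal{A}_{\alpha,\beta}-m\,Id$ is dissipative (the paper's Lemma 2.1) and the operator is closed, bijectivity of $\lambda Id-\mathcal{A}_{\alpha,\beta}$ alone yields $\lambda\in\rho(\mathcal{A}_{\alpha,\beta})$, so the explicit resolvent-norm bookkeeping that you describe as your ``main obstacle'' is largely unnecessary --- you need quantitative control of $p$ only to prove that your candidate solution lies in $\mathcal{H}$ and in $D(\mathcal{A}_{\alpha,\beta})$, not to bound the inverse.
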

\begin{proof}
Since $\mathcal{A}_{\alpha,\beta}-mId$ is dissipative, it suffices to prove that  $\lambda Id-\mathcal{A}_{\alpha,\beta}$
is bijective for every $\lambda\in\overline{\mathbb{C}}_0$.

\medskip

To this end, we fix $(f,g,p,h)^{\top}\in\mathcal{H}$ and we seek
a unique solution $U=(u,v,\theta,z)^{\top}\in D(\mathcal{A}_{\alpha,\beta})$ of
$$\ds\big(\lambda Id-\mathcal{A}_{\alpha,\beta}\big)
\left(
\begin{array}{c}
u \\
v \\
\theta \\
z \\
\end{array}
\right)=
\left(
\begin{array}{c}
f \\
g \\
p \\
h \\
\end{array}
\right),$$
that is verifying
\begin{equation}\label{s9}
\left\{
\begin{array}{ll}
\lambda u-v=f, &  \\
\ds\lambda v+A^{1/2}\big(z(1)+aA^{1/2}v-A^{\beta-\frac{1}{2}}\theta\big)=g, &  \\
\lambda\theta+A^{\alpha/2}\left( A^{\alpha/2}\theta+A^{\beta-\frac{\alpha}{2}}v\right) =p, &  \\
\ds\lambda z+\frac{1}{\tau}z_\rho=h. &
\end{array}
\right.
\end{equation}
The case $\lambda=0$ is immediate. So in the rest of the proof we take $\lambda\neq 0$.

\medskip

Suppose that we have found $u$ with the appropriate regularity. Then,
\begin{equation}\label{s10}
v=\lambda u-f.
\end{equation}
To determine $z$, we set $\ds z(0)=A^{1/2}u$. Then,
by $(\ref{s9})_{4}$, we obtain
\begin{equation}\label{s11}
\ds z(\rho)=e^{-\lambda\tau\rho}A^{1/2}u+\tau
e^{-\lambda\tau\rho}\int_{0}^{\rho}h(s)e^{\lambda\tau s}ds.
\end{equation}
In particular, we have
\begin{equation}\label{s12}
z(1)=e^{-\lambda\tau}A^{1/2}u+z_{0}
\end{equation}
with
\begin{equation}\label{s13}
z_{0}=\tau e^{-\lambda\tau}\int_{0}^{1}h(s)e^{\lambda\tau s}ds\in L^{2}\big((0,1),H\big).
\end{equation}
Using (\ref{s10}) in (\ref{s9})$_2$, we see that $u$ satisfies
\begin{equation}\label{s14}
\ds\lambda^{2}(u,x)_{H}-\lambda(f,x)_{H}+(z(1)+\lambda aA^{1/2}u-
A^{\beta-1/2}\theta,A^{1/2}x)_{H}-a(A^{1/2}f,A^{1/2}x)_{H}=(g,x)_{H},
\end{equation}
for all $\ds x\in D(A^{1/2})$. 

\medskip

Substituting (\ref{s12}) in (\ref{s14}), we get
\begin{equation}\label{s15}
\ds\lambda^{2}(u,x)_{H}+(e^{-\lambda\tau}+\lambda a)
(A^{1/2}u,A^{1/2}x)_{H}-
(A^{\beta-1/2}\theta,A^{1/2}x)_{H}=(\lambda f+g,x)+
(aA^{1/2}f-z_{0},A^{1/2}x)_{H}
\end{equation}
for all $\ds x\in D(A^{1/2})$. 

\medskip

Moreover, plugging (\ref{s10}) in (\ref{s9})$_{3}$, we obtain
\begin{equation}\label{s16}
\ds\lambda(\theta,y)_{H}+(A^{\alpha/2}\theta,A^{\alpha/2}y)_{H}
+\lambda(A^{\beta-\frac{\alpha}{2}}u,A^{\alpha/2}y)_{H}=(p,y)_{H}+(A^{\beta-\frac{\alpha}{2}}f,A^{\alpha/2}y)_{H}
\end{equation}
for all $\ds y\in D(A^{\alpha/2})$. 

\medskip

Next, summing (\ref{s16}) and (\ref{s15}) multiplied by $\ds\overline{\lambda}$,
to get
\begin{equation}\label{s17}
b\big((u,\theta),(x,y)\big)=F(x,y)
\end{equation}
with
\begin{eqnarray*}
b\big((u,\theta),(x,y)\big)&=&\lambda|\lambda|^{2}(u,x)_{H}+
(\overline{\lambda}e^{-\lambda\tau}+|\lambda|^{2}a)
(A^{1/2}u,A^{1/2}x)_{H}-\overline{\lambda}(A^{\beta-1/2}\theta,A^{1/2}x)_{H}\\
&+&\lambda(\theta,y)_{H}+(A^{\frac{\alpha}{2}}\theta,A^{\frac{\alpha}{2}}y)_{H}
+\lambda(A^{1/2}u,A^{\beta-1/2}y)_{H},
\end{eqnarray*}
and
$$\ds F(x,y)=\overline{\lambda}(\lambda f+g,x)_{H}+
\overline{\lambda}(aA^{1/2}f-z_{0},A^{1/2}x)_{H}+
(p,y)_{H}+(A^{\beta-\frac{\alpha}{2}}f,A^{\alpha/2}y)_{H}.$$
The space
$$\mathcal{F}=D(A^{1/2})\times D(A^{\frac{\alpha}{2}})$$
endowed with the inner product
$$\ds\big((x,y),(x_1,y_1)\big)_{\mathcal{F}}=(A^{1/2}x,
A^{1/2}x_{1})_{H}+(x,
x_{1})_{H}+
(A^{\frac{\alpha}{2}}y,A^{\frac{\alpha}{2}}y_1)_{H}+
(y,y_1)_{H}$$
is a Hilbert space. 

\medskip

Using that $2\beta\leq \alpha+1$, we claim that the sesquilinear form $b$ is continuous on $\mathcal{F}\times\mathcal{F}$.
In fact, we have
$$\ds\big|b\big((u,\theta),(x,y)\big)\big|\leq
M\|(u,\theta)\|_{\mathcal{F}}\|(x,y)\|_{\mathcal{F}},
\quad\forall(u,\theta),(x,y)\in\mathcal{F}\times\mathcal{F},$$
for some positive constant $M$.
The anti-linear form $F$ is also clearly continuous on the Hilbert space $\mathcal{F}$. 

\medskip

Now, it is clear that for all $(x,y)\in\mathcal{F}$, we have
\begin{equation}\label{s18}
\ds Re\big(b\big((x,y),(x,y)\big)\big)=
\textrm{Re}(\lambda)|\lambda|^{2}\|x\|_{H}^{2}+
\big(\textrm{Re}(\overline{\lambda}e^{-\lambda\tau})
+|\lambda|^{2}a\big)\|A^{1/2}x\|_{H}^{2}
+\textrm{Re}(\lambda)\|y\|_{H}^{2}+\|A^{\frac{\alpha}{2}}y\|_{H}^{2}.
\end{equation}

To prove that $b$ is coercive, it suffices to prove that $\textrm{Re}(\overline{\lambda}e^{-\lambda\tau})
+|\lambda|^{2}a>0$.

First, we have
\begin{equation*}
\textrm{Re}(\overline{\lambda}e^{-\lambda\tau})
+|\lambda|^{2}a\geq |\lambda|\left( |\lambda|a-1\right)>0
\end{equation*}
for $|\lambda|>\frac{1}{a}$.

\medskip

Now, we suppose that  $|\lambda|\leq\frac{1}{a}$ and putting $\lambda=r+is,\quad(r,s)\in\R_{+}\times\R^{\ast}$. Since we assume that $a\geq \tau$, we have $|s\tau|\leq 1$ and moreover
\begin{eqnarray*}
\textrm{Re}(\overline{\lambda}e^{-\lambda\tau})
+|\lambda|^{2}a&=&(r^2+s^2)a+\big(r\cos(s\tau)-s\sin(s\tau)\big)e^{-r\tau}\\
&> &e^{-r\tau}r\cos(s\tau)+\left(a-\tau \right)s^2+r^2a\\
 &\geq &\left(a-\tau \right)s^2\geq 0.
\end{eqnarray*}

\noindent We conclude that, for all $\lambda\in\mathbb{C}$
such that $\textrm{Re}(\lambda)\geq0$,
we have
\begin{equation}\label{s23}
\textrm{Re}(\overline{\lambda}e^{-\lambda\tau})
+|\lambda|^{2}a>0
\end{equation}
for $a\geq\tau$. 

\medskip

By using (\ref{s23}) in (\ref{s18}), we deduce that $b$ is
coercive on $\mathcal{F}\times\mathcal{F}$.
By the Lax-Milgram lemma, equation (\ref{s17})
has a unique solution $(u,\theta)\in\mathcal{F}$. 

\medskip

We define $v$ by (\ref{s10}) that belongs to $D(A^{1/2})$ and $z$ by (\ref{s11}) that belongs to $H^1(0,1;H)$

\medskip

 Now, if we take $x=0$ in (\ref{s17}), we get
$$(A^{\frac{\alpha}{2}}\theta-A^{\beta-\frac{\alpha}{2}} v,A^{\frac{\alpha}{2}}y)_{H}=
(p-\lambda\theta,y)_{H},\quad\forall y\in D(A^{\frac{\alpha}{2}}).$$
It follows that
$$A^{\frac{\alpha}{2}}\theta-A^{\beta-\frac{\alpha}{2}} v\in D(A^{\alpha/2})\quad\textrm{ and }\quad
A^{\alpha/2}\left(A^{\frac{\alpha}{2}}\theta-A^{\beta-\frac{\alpha}{2}} v \right) +\lambda\theta=p.$$
%It comes that
%$$\theta\in D(A^{\alpha})\cap D(A^{\beta}).$$
\noindent 
Next, taking $y=0$ in (\ref{s17}) divided by $\overline{\lambda}$ leads to
$$(aA^{1/2}v+z(1)-A^{\beta-\frac{1}{2}}\theta,A^{1/2}x)_{H}=(g-\lambda v,x)_{H},
\quad\forall x\in D(A^{1/2}).$$
Thus, we get
$$\ds aA^{1/2}v+z(1)-A^{\beta-\frac{1}{2}}\theta\in D(A^{1/2})\quad\textrm{ and }\quad
A^{1/2}\big(aA^{1/2}v+z(1)-A^{\beta-\frac{1}{2}}\theta\big)+\lambda v=g.$$
We conclude that $\lambda Id-\mathcal{A}_{\alpha,\beta}$ is bijective for all $\lambda\in\mathbb{C}$
such that $\textrm{Re}(\lambda)\geq0$ (in particular, $\mathcal{A}_{\alpha,\beta}-mI$ is m-dissipative).

%To conclude, it suffices to remark that $\lambda Id-\mathcal{A}_{\alpha,\beta}$ is injective for $\lambda\in\mathbb{C}_0$ (It suffices to take $(f,g,h,p)=(0,0,0,0)$ in (\ref{3.9'}, we deduce immediately that $U:=(u,v,\theta, z)=(0,0,0,0)$). 
\end{proof}

Since the operator $\mathcal{A}_{\alpha,\beta}-mI$ is m-dissipative, it generates a $\mathcal{C}_0$-semigroup of contractions 
on $\mathcal{H}$. Then we have the following result:

\begin{proposition}
For $(\beta,\alpha)\in Q$, $a\geq\tau$ and
$\ds\xi\geq\frac{2\tau}{a}$, the system (\ref{s0}) is well-posed.
More precisely, the operator $\mathcal{A}_{\alpha,\beta}$ generates a
$\mathcal{C}_0$-semigroup on $\mathcal{H}$.
Hence, for every
$(u_0,v_0,\theta_0,z_0)\in \mathcal{H}$, there exists a unique solution
$(u,v,\theta,z)\in\mathcal{C}\big((0,+\infty),\mathcal{H}\big)$ of (\ref{s7}).
Moreover, if
$(u_0,v_0,\theta_0,z_0)\in D(\mathcal{A}_{\alpha,\beta})$,
then we have $(u,u^\prime,\theta,z)\in\mathcal{C}\big((0,+\infty),
D(\mathcal{A}_{\alpha,\beta})\big)\cap
\mathcal{C}^{1}\big((0,+\infty),
\mathcal{H}\big)$.
\end{proposition}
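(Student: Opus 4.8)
The plan is to derive generation of the semigroup from the Lumer--Phillips theorem, feeding in the two preceding lemmas as its hypotheses, and then to obtain existence, uniqueness and regularity of solutions from the standard theory of abstract Cauchy problems. First I would assemble the ingredients. Lemma \ref{lemma21} provides that $\mathcal{A}_{\alpha,\beta}-mId$ is dissipative on $\mathcal{H}$ whenever $\xi\geq\frac{2\tau}{a}$, with $m=\frac1a+\frac{\xi}{2\tau}$. Lemma \ref{lemma21"} provides, under $a\geq\tau$, that $\overline{\mathbb{C}}_0\subset\rho(\mathcal{A}_{\alpha,\beta})$; in particular, for every $\lambda>0$ the operator $\lambda Id-(\mathcal{A}_{\alpha,\beta}-mId)=(\lambda+m)Id-\mathcal{A}_{\alpha,\beta}$ is bijective, since $\mathrm{Re}(\lambda+m)\geq 0$, which is exactly the range condition. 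Together these say that $\mathcal{A}_{\alpha,\beta}-mId$ is m-dissipative, as already observed at the end of the proof of Lemma \ref{lemma21"}.

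The one hypothesis of Lumer--Phillips not covered by the lemmas is the density of $D(\mathcal{A}_{\alpha,\beta})$ in $\mathcal{H}$, which I would verify by a direct approximation argument: the domains of the fractional powers of the self-adjoint positive operator $A$ are dense in $H$, and smooth $H$-valued functions are dense in $L^{2}\big((0,1),H\big)$, so a generic element of $\mathcal{H}$ can be approximated by elements of $D(\mathcal{A}_{\alpha,\beta})$ while respecting the compatibility constraint $z(0)=A^{1/2}u$. With density in hand, the Lumer--Phillips theorem yields that $\mathcal{A}_{\alpha,\beta}-mId$ generates a $\mathcal{C}_0$-semigroup of contractions on $\mathcal{H}$. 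Because $mId$ is bounded, the bounded perturbation theorem (equivalently, the rescaling $S(t)=e^{mt}T(t)$) then shows that $\mathcal{A}_{\alpha,\beta}$ generates a $\mathcal{C}_0$-semigroup $(S(t))_{t\geq 0}$ with growth bound $\|S(t)\|\leq e^{mt}$, which is the generation claim of the proposition.

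It then remains only to transcribe the standard consequences for the Cauchy problem (\ref{s7}): for any initial datum $U(0)\in\mathcal{H}$ the mild solution $U(t)=S(t)U(0)$ belongs to $\mathcal{C}\big([0,+\infty),\mathcal{H}\big)$ and is unique, while for $U(0)\in D(\mathcal{A}_{\alpha,\beta})$ the corresponding solution is classical, lying in $\mathcal{C}\big([0,+\infty),D(\mathcal{A}_{\alpha,\beta})\big)\cap\mathcal{C}^1\big([0,+\infty),\mathcal{H}\big)$. I do not expect a genuine obstacle: the essential analysis is already contained in Lemmas \ref{lemma21} and \ref{lemma21"}, so the only point calling for an independent check is the density of the domain, a hypothesis the preceding lemmas leave unaddressed.
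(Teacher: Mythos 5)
Your proposal is correct and follows essentially the same route as the paper: the paper likewise combines Lemma \ref{lemma21} (dissipativity of $\mathcal{A}_{\alpha,\beta}-mId$) with Lemma \ref{lemma21"} (the resolvent/range condition) to conclude m-dissipativity, invokes Lumer--Phillips to get a contraction semigroup for the shifted operator, and recovers the semigroup for $\mathcal{A}_{\alpha,\beta}$ by the bounded perturbation $mId$. The only addition you make, the density check on $D(\mathcal{A}_{\alpha,\beta})$, is harmless but in fact automatic: an m-dissipative operator on a Hilbert (or reflexive Banach) space is necessarily densely defined, which is why the paper does not address it.
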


\subsection{Exponential stability of the delayed coupled system (\ref{s0})}

 First, we recall the following frequency domain result for uniform
stability from \cite{Zwart}, Theorem. 8.1.4, of a
$\mathcal{C}_0$-semigroup $e^{t\mathcal{L}}$ on a Hilbert space:
\begin{lemma}\rm\label{lemma exp.stab}
A $\mathcal{C}_0$ semigroup $e^{t\mathcal{L}}$ on a Hilbert space $G$ satisfies
$$\|e^{t\mathcal{L}}\|_{\mathcal{L}(G)}\leq Ce^{-wt}$$
for some constants $C>0$ and $w>0$ if and only if

\begin{equation}\label{s24}
\overline{\mathbb{C}}_0
\subset\rho(\mathcal{L})
\end{equation}

and

\begin{equation}\label{s25}
\underset{\textrm{Re}(\lambda)\geq 0}\sup\|(\lambda I-\mathcal{L})^{-1}\|_{\mathcal{L}(G)}<\infty,
\end{equation}
where $\rho(\mathcal{L})$ denotes the resolvent set of the operator $\mathcal{L}$.
\end{lemma}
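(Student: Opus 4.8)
The plan is to prove both implications of this classical Gearhart--Pr\"uss--Huang characterization, the forward one being routine and the converse carrying all the weight. For the necessity of (\ref{s24})--(\ref{s25}), I would simply observe that if $\norm{e^{t\mathcal{L}}}_{\mathcal{L}(G)}\leq Ce^{-wt}$, then for every $\lambda$ with $\mathrm{Re}(\lambda)>-w$ the Laplace-type integral $\int_0^\infty e^{-\lambda t}e^{t\mathcal{L}}\,dt$ converges in operator norm and represents $(\lambda I-\mathcal{L})^{-1}$; this places $\overline{\mathbb{C}}_0$ inside $\rho(\mathcal{L})$ and yields $\norm{(\lambda I-\mathcal{L})^{-1}}_{\mathcal{L}(G)}\leq C/(\mathrm{Re}(\lambda)+w)\leq C/w$ on $\overline{\mathbb{C}}_0$, which is exactly (\ref{s24})--(\ref{s25}).

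The substance is the converse. First I would upgrade (\ref{s25}) from the closed half-plane to an open strip: writing $M:=\sup_{\mathrm{Re}(\lambda)\geq 0}\norm{(\lambda I-\mathcal{L})^{-1}}_{\mathcal{L}(G)}$ and expanding the resolvent in a Neumann series around points of the imaginary axis, the series converges whenever the perturbation has modulus below $1/M$, so that $\{\mathrm{Re}(\lambda)>-\delta\}\subset\rho(\mathcal{L})$ with a uniform bound $M'$ for $\delta=1/(2M)$. Next I would invoke the vector-valued Plancherel theorem, valid precisely because $G$ is a Hilbert space (this is the ingredient that makes the whole statement fail in general Banach spaces): for $\eta$ above the growth bound and each $x\in G$, the map $t\mapsto e^{-\eta t}e^{t\mathcal{L}}x\mathbf 1_{t>0}$ lies in $L^2(\mathbb{R};G)$ with Fourier transform $s\mapsto ((\eta+is)I-\mathcal{L})^{-1}x$, whence $\int_{-\infty}^\infty\norm{((\eta+is)I-\mathcal{L})^{-1}x}^2\,ds=2\pi\int_0^\infty e^{-2\eta t}\norm{e^{t\mathcal{L}}x}^2\,dt$.

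The key step, and the one I expect to be the main obstacle, is pushing $\eta\downarrow 0$ to obtain a bound $\int_0^\infty\norm{e^{t\mathcal{L}}x}^2\,dt\leq K\norm{x}^2$ valid for \emph{every} $x\in G$, not merely on $D(\mathcal{L})$. I would handle this by comparing two vertical lines through the resolvent identity: fixing $\eta_0$ above the growth bound and using $((\eta+is)I-\mathcal{L})^{-1}x=((\eta_0+is)I-\mathcal{L})^{-1}x+(\eta_0-\eta)((\eta+is)I-\mathcal{L})^{-1}((\eta_0+is)I-\mathcal{L})^{-1}x$ together with the strip bound $M'$, one gets $\norm{((\eta+is)I-\mathcal{L})^{-1}x}\leq (1+\eta_0 M')\norm{((\eta_0+is)I-\mathcal{L})^{-1}x}$ uniformly in $s$ and in $\eta\in(0,\eta_0]$. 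Since the resolvent is bounded and analytic on the strip, the Plancherel identity persists (by a Paley--Wiener support argument) up to the imaginary axis, so inserting this pointwise estimate controls $\int_0^\infty e^{-2\eta t}\norm{e^{t\mathcal{L}}x}^2\,dt$ by a fixed multiple of $\int_0^\infty e^{-2\eta_0 t}\norm{e^{t\mathcal{L}}x}^2\,dt\leq K\norm{x}^2$ independently of $\eta$; monotone convergence as $\eta\downarrow 0$ then delivers the uniform square-integrability $(*)$. Using the vector-valued transform here is what avoids the graph-norm loss that a scalar argument confined to $D(\mathcal{L})$ would incur.

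Finally I would close with Datko's theorem: the estimate $(*)$ forces exponential decay. Concretely, from $(*)$ together with the a priori bound $\norm{e^{t\mathcal{L}}}\leq M_0e^{\omega_0 t}$ one first extracts uniform boundedness $\norm{e^{t\mathcal{L}}}\leq M_1$, and then the elementary inequality $t\norm{e^{t\mathcal{L}}x}^2\leq M_1^2\int_0^t\norm{e^{s\mathcal{L}}x}^2\,ds\leq M_1^2K\norm{x}^2$ shows $\norm{e^{t\mathcal{L}}}^2\leq M_1^2K/t\to 0$; choosing $t_0$ with $\norm{e^{t_0\mathcal{L}}}<1$ and iterating via the semigroup property yields the required decay $\norm{e^{t\mathcal{L}}}_{\mathcal{L}(G)}\leq Ce^{-wt}$, completing the equivalence.
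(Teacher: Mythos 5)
The paper offers no proof of this lemma at all --- it is simply recalled from the Jacob--Zwart reference (Theorem 8.1.4 there) --- so the only question is whether your blind proof is sound, and it is: you have reconstructed the standard Gearhart--Pr\"uss--Huang argument (Laplace representation of the resolvent for necessity; Neumann-series strip extension, vector-valued Plancherel, a resolvent-identity comparison of vertical lines, and Datko's trick for sufficiency), and the Hilbert-space character of $G$ enters exactly where you say it does. Two compressed steps deserve tightening. First, the claim that ``the Plancherel identity persists up to the imaginary axis'' should be ordered so that it is not circular: the pointwise bound $\|((\eta+is)I-\mathcal{L})^{-1}x\|\le(1+\eta_0 M)\|((\eta_0+is)I-\mathcal{L})^{-1}x\|$ coming from the resolvent identity uses only hypothesis (\ref{s25}); it shows that $\lambda\mapsto(\lambda I-\mathcal{L})^{-1}x$ belongs to the vector-valued Hardy space $H^2$ of the open right half-plane; the Paley--Wiener theorem then produces $f\in L^2\big((0,\infty),G\big)$ whose Laplace transform equals the resolvent on that half-plane, and uniqueness of Laplace transforms (comparing with the representation valid for $\mathrm{Re}\,\lambda$ above the growth bound $\omega_0$) identifies $f(t)=e^{t\mathcal{L}}x$ a.e.; this is what delivers $\int_0^\infty\|e^{t\mathcal{L}}x\|^2\,dt\le K\|x\|^2$, after which your monotone-convergence remark is superfluous. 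Second, the extraction of uniform boundedness $\|e^{t\mathcal{L}}\|\le M_1$ from orbit square-integrability plus the a priori bound $M_0e^{\omega_0 t}$ is asserted rather than proved; the standard argument is to write $\|e^{t\mathcal{L}}x\|\le M_0e^{\omega_0 s}\|e^{(t-s)\mathcal{L}}x\|$ and integrate in $s$ over $[0,t]$. With these two routine points spelled out, your proof is complete and, unlike the paper, makes the stability criterion self-contained.
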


\noindent Now, we can state our main result.

\begin{theorem}\rm\label{theorem21}
For $(\beta,\alpha)\in Q$, $a\geq\tau$ and
$\ds\xi\geq\frac{2\tau}{a}$, the system (\ref{s7}) is exponentially
stable.
\end{theorem}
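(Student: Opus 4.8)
The plan is to apply the frequency-domain criterion of Lemma \ref{lemma exp.stab} to $\mathcal{L}=\mathcal{A}_{\alpha,\beta}$. Condition \eqref{s24} is already supplied by Lemma \ref{lemma21"} (here $a\geq\tau$), so the whole task reduces to the uniform resolvent bound \eqref{s25} on $\overline{\mathbb{C}}_0$. I would argue by contradiction: if \eqref{s25} fails, then since the resolvent is continuous on $\overline{\mathbb{C}}_0$ there exist $(\lambda_n)\subset\overline{\mathbb{C}}_0$ and $U_n=(u_n,v_n,\theta_n,z_n)^{\top}\in D(\mathcal{A}_{\alpha,\beta})$ with $\|U_n\|_{\mathcal{H}}=1$ and $F_n:=(\lambda_n Id-\mathcal{A}_{\alpha,\beta})U_n\to0$ in $\mathcal{H}$. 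Because $\mathcal{A}_{\alpha,\beta}-mId$ is m-dissipative one has $\|(\lambda Id-\mathcal{A}_{\alpha,\beta})^{-1}\|\leq(\mathrm{Re}(\lambda)-m)^{-1}$ for $\mathrm{Re}(\lambda)>m$, so the blow-up forces $\mathrm{Re}(\lambda_n)$ to stay bounded while $|\lambda_n|\to\infty$.

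The core is to show that this forces $\|U_n\|_{\mathcal{H}}\to0$, contradicting $\|U_n\|_{\mathcal{H}}=1$. First I would take the real part of $(F_n,U_n)_{\mathcal{H}}=o(1)$. Writing out the four component equations of \eqref{s9}, the thermomechanical coupling terms $-(A^{\beta}\theta_n,v_n)_H$ and $(A^{\beta}v_n,\theta_n)_H$ cancel after taking real parts, since $A^{\beta}$ is self-adjoint; what survives is $\mathrm{Re}(\lambda_n)\|U_n\|_{\mathcal{H}}^2+a\|A^{1/2}v_n\|_H^2+\|A^{\alpha/2}\theta_n\|_H^2+\tfrac{\xi}{2\tau}\big(\|z_n(1)\|_H^2-\|A^{1/2}u_n\|_H^2\big)$ together with the indefinite cross terms $\mathrm{Re}(z_n(1),A^{1/2}v_n)_H-\mathrm{Re}(A^{1/2}v_n,A^{1/2}u_n)_H$. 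The region hypothesis $2\beta-\alpha\leq1$, i.e. $(\beta,\alpha)\in Q$, is exactly what makes the coupling operators $A^{\beta-\alpha/2}$ and $A^{\beta-1/2}$ subordinate to $A^{1/2}$ and $A^{\alpha/2}$, so all these pairings are legitimate and controlled.

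The main obstacle is that the dissipation controls only $\|A^{1/2}v_n\|_H$ and $\|A^{\alpha/2}\theta_n\|_H$, neither of which is a priori bounded by $\|U_n\|_{\mathcal{H}}=1$, and the delay contributes the sign-indefinite boundary term $z_n(1)$. To break the deadlock I would substitute the two available identities $A^{1/2}v_n=\lambda_n A^{1/2}u_n-A^{1/2}f_n$ (from the first line of \eqref{s9}) and $z_n(1)=e^{-\lambda_n\tau}A^{1/2}u_n+z_{0,n}$ (from \eqref{s12}--\eqref{s13}), using $\mathrm{Re}(\lambda_n)\geq0$ so that $|e^{-\lambda_n\tau\rho}|\leq1$ and hence $\|z_{0,n}\|_{L^2}\leq\tau\|h_n\|_{L^2}\to0$. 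These turn the troublesome cross terms into quantities at most linear in $\|A^{1/2}v_n\|_H$ with bounded coefficients, so the coercive term $a\|A^{1/2}v_n\|_H^2$ dominates and forces an a priori bound on $\|A^{1/2}v_n\|_H$.

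Once $\|A^{1/2}v_n\|_H$ is bounded, I would extract the decay in cascade. First, $\|A^{1/2}u_n\|_H=|\lambda_n|^{-1}\|A^{1/2}v_n+A^{1/2}f_n\|_H\to0$ because $|\lambda_n|\to\infty$; consequently $z_n(1)\to0$, and feeding this back into the real-part identity yields $\|A^{1/2}v_n\|_H\to0$ and $\|A^{\alpha/2}\theta_n\|_H\to0$. Using the strict positivity of $A$ (so that $\|w\|_H\leq\lambda_1^{-1/2}\|A^{1/2}w\|_H$, with $\lambda_1>0$ the bottom of the spectrum, and the analogous bound with $A^{\alpha/2}$) I then obtain $\|v_n\|_H\to0$ and $\|\theta_n\|_H\to0$, while the explicit representation \eqref{s11}, together with $\|A^{1/2}u_n\|_H\to0$ and $h_n\to0$, gives $\int_0^1\|z_n\|_H^2\,d\rho\to0$. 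Hence $\|U_n\|_{\mathcal{H}}\to0$, the desired contradiction, so \eqref{s25} holds and Lemma \ref{lemma exp.stab} delivers the exponential stability of \eqref{s7}.
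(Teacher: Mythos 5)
Your proof is correct and takes essentially the same route as the paper: the frequency-domain criterion of Lemma \ref{lemma exp.stab} with Lemma \ref{lemma21"} supplying the resolvent-set condition, then a contradiction sequence that is killed by combining the real part of $\big((\lambda_n I-\mathcal{A}_{\alpha,\beta})U_n,U_n\big)_{\mathcal{H}}$ with the substitutions $A^{1/2}u_n=\lambda_n^{-1}\big(A^{1/2}v_n+f_n\big)$ and the transport representation of $z_n$ to force $\|U_n\|_{\mathcal{H}}\to0$. Your only deviations --- keeping the cross terms explicit instead of invoking the pre-packaged dissipativity estimate (\ref{s8}), and establishing boundedness of $\|A^{1/2}v_n\|_{H}$ before running the decay cascade --- are organizational rather than substantive.
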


\begin{proof}

In view of Lemma \ref{lemma21"} the condition (\ref{s24}) is satisfied.  Moreover,

\medskip

 Now, we suppose that condition (\ref{s25}) is false. Then (using again Lemma \ref{lemma21"},
there exists a sequence of complex numbers $\lambda_n$ such that
$\textrm{Re}(\lambda_n)\geq0$, $|\lambda_n|\rightarrow+\infty$ and a sequence
of vectors $U_n=(u_n,v_n,\theta_n,z_n)^t\in D(\mathcal{A}_{\alpha,\beta})$
with
\begin{equation}\label{s26}
\ds\|A^{1/2}u_n\|^{2}_{H}+\|v_n\|^{2}_{H}+\|\theta_{n}\|_{H}^{2}
+\xi\int_{0}^{1}\|z_n(\rho)\|_{H}^{2}=1,\quad\forall n \geq 1,
\end{equation}
and such that

\begin{equation}\label{s27}
\underset{n\rightarrow+\infty}\lim\big\|(\lambda_n I-\mathcal{A}_{\alpha,\beta})U_n\big\|_{H}=0,
\end{equation}

i.e.,

\begin{equation}\label{s28}
\ds\lambda_nA^{1/2}u_n-A^{1/2}v_n\equiv f_n\rightarrow0 \textrm{ in } H,
\end{equation}

\begin{equation}\label{s29}
\ds\lambda_nv_n+A^{1/2}\big(z_n(1)+aA^{1/2}v_n
-A^{\beta-\frac{1}{2}}\theta_n\big)\rightarrow0 \textrm{ in } H,
\end{equation}

\begin{equation}\label{s30}
\ds\lambda_n\theta_n+A^{\alpha/2}\big(A^{\alpha/2}\theta_n+A^{\beta-\frac{\alpha}{2}}v_n\big)\rightarrow0 \textrm{ in } H,
\end{equation}

\begin{equation}\label{s31}
\ds\lambda_nz_n+\frac{1}{\tau}\partial_{\rho}z_n
\equiv h_n\rightarrow0 \textrm{ in }L^2\big((0,1)H\big).
\end{equation}
Since $\bigg|\big((\lambda_n I-\mathcal{A}_{\alpha,\beta})U_n,U_n\big)_{\mathcal{H}}\bigg|
\geq\textrm{Re}(\lambda _n)-\textrm{Re}(\mathcal{A}_{\alpha,\beta}U_n,U_n)_{\mathcal{H}}$,
then, using (\ref{s8}) and (\ref{s28}), we obtain that
\begin{eqnarray*}
% \nonumber to remove numbering (before each equation)
\ds\bigg|\big((\lambda_n I-\mathcal{A}_{\alpha,\beta})U_n,U_n\big)_{\mathcal{H}}\bigg|&\geq&
\textrm{Re}(\lambda_n)+\frac{a}{2}\|A^{1/2}v_n\|_{H}^{2}+
\left(\frac{\xi}{2\tau}-\frac{1}{a}\right)\|z_n(1)\|_{H}^{2}+
\|A^{\frac{\alpha}{2}}\theta_n\|_{H}^{2}-m\|A^{1/2}u_n\|_{H}^{2} \\
&\geq&\textrm{Re}(\lambda_n)+
\left(\frac{a}{2}-\frac{2m}{|\lambda_n|}\right)\|A^{1/2}v_n\|_{H}^{2}
-\frac{2m}{|\lambda_n|}\|f_n\|_{H}^{2}+\left(\frac{\xi}{2\tau}
-\frac{1}{a}\right)\|z_n(1)\|_{H}^{2}\\
&&+\|A^{\frac{\alpha}{2}}\theta_n\|_{H}^{2}.
\end{eqnarray*}
Note that there exists $n_0\in\N$ such that
$\ds\frac{a}{2}-\frac{2m}{|\lambda_n|}\geq\frac{a}{4}$ for all
$n\geq n_0$. Thus, for $n\geq n_0$, we get
\begin{eqnarray}
\bigg|\big((\lambda_n I-\mathcal{A}_{\alpha,\beta})U_n,U_n\big)_{\mathcal{H}}\bigg| &\geq &
\textrm{Re}(\lambda_n)+\frac{a}{4}\|A^{1/2}v_n\|_{H}^{2}
-\frac{2m}{|\lambda_n|}\|f_n\|_{H}^{2} \notag\\
&+&\left(\frac{\xi}{2\tau}
-\frac{1}{a}\right)\|z_n(1)\|_{H}^{2}+\|A^{\frac{\alpha}{2}}\theta_n\|_{H}^{2},\label{s31'}
\end{eqnarray}
which further leads to

\begin{equation}\label{s32}
\ds A^{1/2}v_n\underset{n\rightarrow+\infty}\rightarrow0\;\;\quad\textrm{ and }\quad
A^{\frac{\alpha}{2}}\theta_n\underset{n\rightarrow+\infty}\rightarrow0\textrm{ in }H,
\end{equation}
%$\; z_n(1)
%\underset{n\rightarrow+\infty}\rightarrow0\textrm{ in }H$
(we have used the estimate (\ref{s27}) in (\ref{s31'})). Then, it follows
\begin{equation}\label{s33}
v_n\underset{n\rightarrow+\infty}\rightarrow0\quad\textrm{ and }\quad
\theta_n\underset{n\rightarrow+\infty}\rightarrow0\textrm{ in }H.
\end{equation}
Furthermore,
\begin{equation}\label{s34}
\ds A^{1/2}u_n\underset{n\rightarrow+\infty}\rightarrow0\textrm{ in }H,
\end{equation}
due to (\ref{s28}). 

\medskip

By integration of the identity (\ref{s31}), we obtain
\begin{equation}\label{s35}
\ds z_n(\rho)=A^{1/2}u_n e^{-\lambda_n\rho}+
\tau\int_{0}^{\rho}e^{-\tau\lambda_n(\rho-s)}h_n(s)ds.
\end{equation}

Combining (\ref{s34}) and (\ref{s35}), we get
\begin{equation}\label{s36}
\ds\|z_n\|_{L^{2}\big((0,1),H\big)}^{2}\leq2\|A^{1/2}u_n\|_{H}^{2}
+2\tau^{2}\|h_n\|_{L^{2}\big((0,1),H\big)}^{2}\ds\underset{n\rightarrow+\infty}\rightarrow0.
\end{equation}
Finally, we have shown that $\|U_n\|_{\mathcal{H}}\underset{n\rightarrow+\infty}\rightarrow0$
which clearly contradicts (\ref{s26}). Thus, (\ref{s25}) holds and the proof of
Theorem \ref{theorem21}
 is then finished.
\end{proof}
\begin{remark}
Adding the Kelvin-Voigt damping $aAu^\prime(t)$ to $ Au(t-\tau)$ not only restores the exponential stability in the region $S$, but  improves the  stability in the $S_1\cup S_2$ which become exponential.
\end{remark}

\subsection{Some related systems}
We consider the following system
 \begin{equation}\label{s0'"}
\left\{
\begin{array}{ll}
u''(t)+ Au(t-\tau)+aAu'(t)-C\theta(t)=0, & \quad t\in(0,+\infty), \\
\theta'(t)+A^{\alpha}\theta(t)+C^{*}u'(t)=0, & \quad t\in(0,+\infty), \\
u(0)=u_{0}, u'(0)=u_{1}, \theta(0)=\theta_{0}, &  \\
\ds B^{*}u(t-\tau)=\phi(t-\tau), & \quad t\in(0,\tau),
\end{array}
\right.
\end{equation}
where $\tau>0$ is a constant time delay and $a>0$, $B:D(B) \subset H\rightarrow H$ and $C:D(C) \subset H\rightarrow H$ are closed densely defined linear operators with $B^*$ and $C^*$ are the adjoints of $B$ and $C$. %satisfying $D(A^{1/2})= D(B^*)$ and there exists $c_1>0$ such that
%$$\|A^{1/2}v\|_H\leq c_1\|B^*v\|_{H},\;\;\forall\
%v\in D(A^{1/2}),$$
%and 
 We suppose that $A=BB^*$. Formally, system (\ref{s0'"}) can be seen as a generalization of the delayed  $\alpha$-$\beta$ system (\ref{s0}). 
 
 We suppose also that 
 \be
\label{coer}
\exists \, c_1 > 0 \; \hbox{ s.t. } \; \|B^{*}v\|_H\geq c_1\|v\|_{H},\;\;\forall\
v\in D(B^{*}),
\ee
 \begin{equation*}
 D(A^{\alpha})\subset D(C), \quad D(B^{*})\subset D(C^*)
 \end{equation*}
 and
\be
\label{comp}
\exists \, c_2 > 0 \; \hbox{ s.t. } \; \|C^{*}v\|_H\leq c_2\|B^*v\|_{H},\;\;\forall\
v\in D(B^{*}).
\ee  %We are interested in the study of the asymptotic behavior of solutions $u,\theta:[0,\infty)\rightarrow H$ as $t\rightarrow\infty$, %for $(\alpha,\beta)\in[0,1]\times [0,1/2]$  such that $\beta \leq \alpha$.

\subsubsection{Well-posedness}
Here we take
$$\ds z(\rho,t):=B^{*}u(t-\tau\rho),\quad\rho\in(0,1), t>0.$$
Then, problem (\ref{s0'"}) is equivalent to

\begin{equation}\label{s011}
\left\{
\begin{array}{ll}
u''(t)+Bz(1,t)+aBB^*u'(t)-C\theta(t)=0, &  t>0, \\
\theta'(t)+A^{\alpha}\theta(t)+C^{*}u'(t)=0, & t>0, \\
\tau z_{t}(t,\rho)+z_{\rho}(\rho,t)=0, & (\rho,t)\in(0,1)\times(0,+\infty), \\
u(0)=u_{0}, u'(0)=u_{1}, \theta(0)=\theta_{0}, & \\
z(\rho,0)=\phi(-\tau\rho),& \rho\in(0,1),\\
z(0,t)=B^*u(t),&\quad t>0.
\end{array}
\right.
\end{equation}

Define
$U=(u,u',\theta,z)^{\top}$,
then problem (\ref{s011}) can be formulated as a first order system of the form

\begin{equation}\label{s77}
\left\{
\begin{array}{ll}
U'=\mathcal{A}U, & \\
\ds U(0)=\big(u_{0},u_{1},\theta_{0},\phi(-\tau.)\big)^{\top},
\end{array}
\right.
\end{equation}
where the operator $\mathcal{A}$ is defined by
$$\mathcal{A}
\left(
\begin{array}{c}
u \\
v \\
\theta \\
z \\
\end{array}
\right)=
\left(
\begin{array}{c}
v \\
\ds-B\big(z(1)+aB^*v\big)+C\theta \\
-A^{\alpha}\theta-C^{*}v \\
\ds-\frac{1}{\tau}z_{\rho} \\
  \end{array}
\right),$$
with domain
$$D(\mathcal{A})=\left\{
\begin{array}{c}
(u,v,\theta,z)^{\top}\in D(B^*)\times D(B^*)
\times D(A^{\alpha})\times H^{1}\big((0,1),H\big): \\
\ds z(0)=B^*u\quad\textrm{ and }\quad aB^*v+z(1)\in D(B)
\end{array}
\right\},$$
in the Hilbert space
$$\mathcal{H}=D(B^*)\times H\times H\times L^{2}\big((0,1),H\big),$$
equipped with the scalar product
$$\big((u,v,\theta,z)^\top,(u_1,v_1,\theta_1,z_1)^\top\big)_{\mathcal{H}}
=\big(B^*u,B^*u_1\big)_{H}
+(v,v_1)_{H}+(\theta,\theta_1)_{H}+\xi\int_{0}^{1}(z,z_1)_{H}d\rho,$$
where $\xi>0$ is a positive constant.

\medskip

 As in the previous case,  for $\ds\xi\geq\frac{2\tau}{a}$, $\mathcal{A}-mId$ is dissipative in $\mathcal{H}$, where $m = \frac{1}{a} + \frac{\xi}{2\tau}$. Moreover, we have the following lemma:

\begin{lemma}\rm\label{lemma23'}
Assume that $a \geq \tau$.
Then
\begin{equation}\label{s49'}
\overline{\mathbb{C}}_0
\subset\rho\big(\mathcal{A}\big).
\end{equation}
\end{lemma}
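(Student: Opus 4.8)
The plan is to run the proof of Lemma \ref{lemma21"} almost verbatim, with $A^{1/2}$ replaced by $B$, the coupling $A^{\beta-1/2}\theta$ replaced by $C\theta$ and $A^{\beta-\alpha/2}v$ replaced by $C^{*}v$, and to verify that hypotheses \eqref{coer} and \eqref{comp} deliver exactly what the explicit fractional powers (together with $2\beta-\alpha\le 1$) provided there. Since $\mathcal{A}-mId$ is dissipative, it suffices to show that $\lambda Id-\mathcal{A}$ is bijective for every $\lambda\in\overline{\mathbb{C}}_0$. The case $\lambda=0$ is immediate: the three equations decouple, $\theta$ is obtained from $A^{\alpha}\theta=p-C^{*}v$ by the boundedness of $A^{-\alpha}$, and $u$ from $B\big(z(1)+aB^{*}v\big)=g+C\theta$ using that $A=BB^{*}$ is boundedly invertible, hence $B$ is surjective. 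For $\lambda\neq 0$ I fix $(f,g,p,h)^{\top}\in\mathcal{H}$, set $v=\lambda u-f$, impose $z(0)=B^{*}u$ to solve the transport equation explicitly (yielding $z(1)=e^{-\lambda\tau}B^{*}u+z_{0}$ as in \eqref{s12}), and reduce the system to a single variational identity $b\big((u,\theta),(x,y)\big)=F(x,y)$ on $\mathcal{F}=D(B^{*})\times D(A^{\alpha/2})$, where
\begin{equation*}
b\big((u,\theta),(x,y)\big)=\lambda|\lambda|^{2}(u,x)_{H}+\big(\overline{\lambda}e^{-\lambda\tau}+|\lambda|^{2}a\big)(B^{*}u,B^{*}x)_{H}-\overline{\lambda}(\theta,C^{*}x)_{H}+\lambda(\theta,y)_{H}+(A^{\alpha/2}\theta,A^{\alpha/2}y)_{H}+\lambda(C^{*}u,y)_{H}.
\end{equation*}

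Next I would check continuity of $b$ and $F$ on $\mathcal{F}$. The only nonstandard terms are the two coupling terms, and here \eqref{comp} is used in place of the inequality $2\beta-\alpha\le 1$: since $\|C^{*}x\|_H\le c_2\|B^{*}x\|_H$, one has $|(\theta,C^{*}x)_H|\le c_2\|\theta\|_H\|B^{*}x\|_H$ and $|(C^{*}u,y)_H|\le c_2\|B^{*}u\|_H\|y\|_H$, both controlled by the $\mathcal{F}$-norms. Thus $b$ is bounded on $\mathcal{F}\times\mathcal{F}$ and $F$ is a bounded antilinear form.

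The decisive step is coercivity. Evaluating $b$ on the diagonal, the two coupling terms become, at $(x,y)=(u,\theta)$, the conjugate pair $-\overline{\lambda}\,w+\lambda\,\overline{w}$ with $w=(\theta,C^{*}u)_H$, which is purely imaginary and therefore disappears from the real part; exactly as in \eqref{s18} one is left with
\begin{equation*}
\textrm{Re}\,b\big((x,y),(x,y)\big)=\textrm{Re}(\lambda)|\lambda|^{2}\|x\|_H^{2}+\big(\textrm{Re}(\overline{\lambda}e^{-\lambda\tau})+a|\lambda|^{2}\big)\|B^{*}x\|_H^{2}+\textrm{Re}(\lambda)\|y\|_H^{2}+\|A^{\alpha/2}y\|_H^{2}.
\end{equation*}
The coefficient of $\|B^{*}x\|_H^{2}$ is strictly positive by \eqref{s23}, which is where $a\ge\tau$ enters. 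When $\textrm{Re}(\lambda)=0$ the two lower-order terms drop out, so the full $\mathcal{F}$-norm must be recovered from $\|B^{*}x\|_H^{2}$ and $\|A^{\alpha/2}y\|_H^{2}$ alone: here \eqref{coer} gives $\|B^{*}x\|_H\ge c_1\|x\|_H$ and the strict positivity of $A$ gives $\|A^{\alpha/2}y\|_H\gtrsim\|y\|_H$, so $b$ is coercive on $\mathcal{F}$ for the fixed $\lambda$. This coercivity argument is the main obstacle, precisely because it hinges on the purely-imaginary cancellation of the coupling and on \eqref{coer}, \eqref{comp} substituting for the concrete fractional-power estimates.

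Finally, Lax--Milgram produces a unique $(u,\theta)\in\mathcal{F}$; I then define $v=\lambda u-f\in D(B^{*})$ and $z$ by the explicit formula in $H^{1}\big((0,1),H\big)$. Taking $x=0$ in the variational identity recovers $\lambda\theta+A^{\alpha}\theta+C^{*}v=p$ and shows $\theta\in D(A^{\alpha})\subset D(C)$; taking $y=0$ and dividing by $\overline{\lambda}$ recovers $\lambda v+B\big(z(1)+aB^{*}v\big)-C\theta=g$ together with the domain condition $aB^{*}v+z(1)\in D(B)$ (using that $B=(B^{*})^{*}$). Hence $U=(u,v,\theta,z)^{\top}\in D(\mathcal{A})$ solves $(\lambda Id-\mathcal{A})U=(f,g,p,h)^{\top}$, which establishes the bijectivity of $\lambda Id-\mathcal{A}$ and therefore \eqref{s49'}.
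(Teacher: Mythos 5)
Your proposal is correct and takes essentially the same route as the paper's proof: the same transport-equation reduction with $z(0)=B^{*}u$, the same sesquilinear form $b$ on $\mathcal{F}=D(B^{*})\times D(A^{\alpha/2})$, continuity via (\ref{comp}), coercivity via $\textrm{Re}(\overline{\lambda}e^{-\lambda\tau})+a|\lambda|^{2}>0$ for $a\geq\tau$, Lax--Milgram, and recovery of the domain conditions by testing with $x=0$ and $y=0$. The only differences are cosmetic: you make explicit the coercivity when $\textrm{Re}(\lambda)=0$, where (\ref{coer}) and the strict positivity of $A$ are genuinely needed to recover the full $\mathcal{F}$-norm (a point the paper leaves implicit), and your sketch of the case $\lambda=0$ is at the same level of detail as the paper's statement that $0\in\rho(\mathcal{A})$ is easy to show.
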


\begin{proof}
First, it is easy to show that $0\in\rho(\mathcal{A})$. Now let $\lambda \in \overline{\mathbb{C}}_0$ such that $\lambda\neq 0$.  
We will prove that  $\lambda Id-\mathcal{A}$
is bijective.
We fix $(f,g,p,h)\in\mathcal{H}$ and we solve the equation
\begin{equation}\label{bij}
\big(\lambda Id-\mathcal{A}\big)
\left(
\begin{array}{c}
u \\
v \\
\theta \\
z \\
\end{array}
\right)=
\left(
\begin{array}{c}
f \\
g \\
p \\
h \\
\end{array}
\right)
\end{equation}
with $U=(u,v,\theta,z)^{\top}\in D(\mathcal{A})$.

\medskip

Equation (\ref{bij}) is written explicitly as
\begin{equation}\label{s9'}
\left\{
\begin{array}{ll}
\lambda u-v=f, &  \\
\ds\lambda v+B\big(z(1)+aB^*v\big)-C\theta=g, &  \\
\lambda\theta+A^{\alpha}\theta+C^{*}v=p, &  \\
\ds\lambda z+\frac{1}{\tau}z_\rho=h. &
\end{array}
\right.
\end{equation}
By taking in mind that $z(0)=B^*u$, equation (\ref{s9'})$_{4}$ can be solved as follows
\begin{equation}\label{s11'}
\ds z(\rho)=e^{-\lambda\tau\rho}B^*u+\tau
e^{-\lambda\tau\rho}\int_{0}^{\rho}h(s)e^{\lambda\tau s}ds.
\end{equation}
In particular, we have
\begin{equation}\label{s12'}
z(1)=e^{-\lambda\tau}B^*u+z_{0}
\end{equation}
with
\begin{equation*}
z_{0}=\tau e^{-\lambda\tau}\int_{0}^{1}h(s)e^{\lambda\tau s}ds\in L^{2}\big((0,1),H\big).
\end{equation*}
Multiplying (\ref{s9'})$_2$ by $\lambda x$,  $\ds x\in D(B^*)$, and (\ref{s9'})$_3$ by $y\in D(A^\alpha)$ respectively, then summing the obtained results, we get, using (\ref{s9'})$_1$ and (\ref{s12'}),   

\begin{equation}\label{s17'}
b\big((u,\theta),(x,y)\big)=F(x,y)
\end{equation}
with
\begin{eqnarray*}
b\big((u,\theta),(x,y)\big)&=&\lambda|\lambda|^{2}(u,x)_{H}+
\left(\overline{\lambda}e^{-\lambda\tau}+|\lambda|^{2}a
\right)
(B^*u,B^*x)_{H}-\overline{\lambda}(\theta,C^*x)_{H}\\
&+&\lambda(\theta,y)_{H}+(A^{\frac{\alpha}{2}}\theta,A^{\frac{\alpha}{2}}y)_{H}
+\lambda(C^{*}u,y)_{H},
\end{eqnarray*}
and
$$\ds F(x,y)=\overline{\lambda}(\lambda f+g,x)_{H}+
\overline{\lambda}(aB^*f-z_{0},B^*x)_{H}+
(p+C^{*}f,y)_{H}.$$
The space
$$\mathcal{F}=D(B^*)\times D(A^{\frac{\alpha}{2}})$$
endowed with the inner product
$$\ds\big((x,y),(x_1,y_1)\big)_{\mathcal{F}}=(B^*x,
B^*x_{1})_{H}+(x,x_1)_{H}+
(A^{\frac{\alpha}{2}}y,A^{\frac{\alpha}{2}}y_1)_{H}+(y,y_1)_{H}$$
is a Hilbert space. 

\medskip

Using (\ref{comp}), we have that the sesquilinear form $b$ is continuous on $\mathcal{F}\times \mathcal{F}$. Moreover, the anti-linear form $F$ is continuous on $\mathcal{F}$. 

\medskip

 Now, for all $(x,y)\in\mathcal{F}$, we have
\begin{equation}\label{s18'}
\ds Re\bigg(b\big((x,y),(x,y)\big)\bigg)=
\textrm{Re}(\lambda)|\lambda|^{2}\|x\|_{H}^{2}+
\big(\textrm{Re}(\overline{\lambda}e^{-\lambda\tau})
+|\lambda|^{2}a\big)\|B^*x\|_{H}^{2}
+\textrm{Re}(\lambda)\|y\|_{H}^{2}+\|A^{\frac{\alpha}{2}}y\|_{H}^{2}.
\end{equation}
As in the proof of Lemma \ref{lemma21}, we have
$\textrm{Re}(\overline{\lambda}e^{-\lambda\tau})
+|\lambda|^{2}a>0$
for $a\geq\tau$. 
Then $b$ is
coercive on $\mathcal{F}\times\mathcal{F}$.
By the Lax-Milgram lemma, equation (\ref{s17'})
has a unique solution $(u,\theta)\in\mathcal{F}$. 

\medskip

We define $v$ by (\ref{s9'})$_1$ that belongs to $D(B^*)$ and $z$ by (\ref{s11'}) that belongs to $H^1(0,1;H).$

\medskip

 Now, if we consider $(0,y)\in\mathcal{F}$ in (\ref{s17'}), we get
$$(A^{\frac{\alpha}{2}}\theta,A^{\frac{\alpha}{2}}y)_{H}=
(p-\lambda\theta-C^* v,y)_{H},\quad\forall y\in D(A^{\frac{\alpha}{2}}).$$
It follows that
$$\theta\in D(A^{\alpha})\quad\textrm{ and }\quad
A^{\alpha}\theta+\lambda\theta+C^{*}v=p.$$
%It comes that
%$$\theta\in D(A^{\alpha})\cap D(A^{\beta}).$$
\noindent Next, Taking $(x,0)\in\mathcal{F}$ in (\ref{s17'}) leads to
$$(aB^*v+z(1),B^*x)_{H}=(g+C\theta-\lambda v,x)_{H},
\quad\forall x\in D(B^*).$$
Thus, we get
$$\ds aB^*v+z(1)\in D(B)\quad\textrm{ and }\quad
B\big(aB^*v+z(1)\big)+\lambda v-C\theta=g.$$

We conclude that equation (\ref{bij}) has a unique solution $U\in D(\mathcal{A})$. Hence $\lambda Id-\mathcal{A}$ is bijective. Further, by the closedness of $\mathcal{A}$ (because $0\in\rho(\mathcal{A})$), we deduce that $\lambda\in\rho(\mathcal{A})$ for every $\lambda\in \overline{\mathbb{C}}_0-\{0\}$.
%Now, let's take
%$(u,v,\theta,z)\in\ker\big(\mathcal{A}-\lambda I\big)$.
%We know that there exits a unique $(u,\theta)\in\mathcal{F}$
%such that $\Phi\big((u,\theta),(x,y)\big)=0$ for all $(x,y)\in\mathcal{F}$.
%Thus, $u=\theta=0$.  from (\ref{s9'})$_1$ and (\ref{s11'}), taking in mind that $f=0$ and $h=0$,  we get $v=0$ and $z=0$. This proves that $\mathcal{A}-\lambda I$
%is injective and since it is closed, we conclude that
%$\lambda\in\rho\big(\mathcal{A}\big)$ so that the condition (\ref{s24})
%is fulfilled.
\end{proof}
 In particular, we have the following result.

\begin{proposition}
For $a\geq\tau$ and
$\ds\xi\geq\frac{2\tau}{a}$, the system (\ref{s011}) is well-posed.
More precisely, the operator $\mathcal{A}$ generates a
$\mathcal{C}_0$-semigroup on $\mathcal{H}$.
Hence, for every
$(u_0,v_0,\theta_0,z_0)\in \mathcal{H}$, there exists a unique solution
$(u,v,\theta,z)\in\mathcal{C}\big((0,+\infty),\mathcal{H}\big)$ of (\ref{s77}).
Moreover, if
$(u_0,v_0,\theta_0,z_0)\in D(\mathcal{A})$,
then we have $(u,u^\prime,\theta,z)\in\mathcal{C}\big((0,+\infty),
D(\mathcal{A})\big)\cap
\mathcal{C}^{1}\big((0,+\infty),\mathcal{H}\big)$.
\end{proposition}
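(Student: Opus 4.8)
The plan is to deduce the statement from the Lumer--Phillips theorem together with a standard rescaling, exactly as for the corresponding proposition attached to system (\ref{s0}). First I would record the two ingredients that are already in place: the discussion preceding Lemma \ref{lemma23'} shows that, for $\xi\geq\frac{2\tau}{a}$ and $m=\frac{1}{a}+\frac{\xi}{2\tau}$, the operator $\mathcal{A}-mId$ is dissipative on $\mathcal{H}$; and Lemma \ref{lemma23'} gives $\overline{\mathbb{C}}_0\subset\rho(\mathcal{A})$. Since $D(\mathcal{A})$ contains $D(B^*)\times D(B^*)\times D(A^\alpha)\times H^1((0,1),H)$ with the constraint $z(0)=B^*u$, it is dense in $\mathcal{H}$, so $\mathcal{A}-mId$ is a densely defined dissipative operator.

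The only remaining point is the range (surjectivity) condition required by Lumer--Phillips. Here I would simply observe that $1+m>0$, hence $1+m\in\overline{\mathbb{C}}_0\subset\rho(\mathcal{A})$ by Lemma \ref{lemma23'}; consequently $(1+m)Id-\mathcal{A}=Id-(\mathcal{A}-mId)$ maps $D(\mathcal{A})$ bijectively onto $\mathcal{H}$, and in particular its range is all of $\mathcal{H}$. Combined with dissipativity this shows that $\mathcal{A}-mId$ is m-dissipative, so by the Lumer--Phillips theorem it generates a $\mathcal{C}_0$-semigroup of contractions $(S(t))_{t\geq0}$ on $\mathcal{H}$.

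To pass from $\mathcal{A}-mId$ to $\mathcal{A}$ itself, I would use that $mId$ is bounded: writing $\mathcal{A}=(\mathcal{A}-mId)+mId$, the bounded perturbation theorem (equivalently, the explicit rescaling $T(t)=e^{mt}S(t)$) shows that $\mathcal{A}$ generates the $\mathcal{C}_0$-semigroup $(e^{mt}S(t))_{t\geq0}$ on $\mathcal{H}$. With the generation property established, the existence, uniqueness and regularity assertions are the classical consequences of the theory of abstract Cauchy problems: for $U(0)\in\mathcal{H}$ the orbit $t\mapsto T(t)U(0)$ is the unique mild solution in $\mathcal{C}([0,+\infty),\mathcal{H})$, while for $U(0)\in D(\mathcal{A})$ it is the unique classical solution, lying in $\mathcal{C}([0,+\infty),D(\mathcal{A}))\cap\mathcal{C}^{1}([0,+\infty),\mathcal{H})$ and solving $U'(t)=\mathcal{A}U(t)$ pointwise.

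I do not anticipate any genuine obstacle in this step, because all of the analytic work has already been absorbed into Lemma \ref{lemma23'} (the bijectivity of $\lambda Id-\mathcal{A}$ via Lax--Milgram, using the coercivity hypothesis (\ref{coer}), the compatibility hypothesis (\ref{comp}), and the sign condition $a\geq\tau$) and into the dissipativity estimate. The one detail that warrants a line of care is the verification of the range condition, but the inclusion $\overline{\mathbb{C}}_0\subset\rho(\mathcal{A})$ reduces it to the single scalar $\lambda=1+m$; everything else is a direct invocation of Lumer--Phillips, the bounded perturbation theorem, and the standard solution theory for $U'=\mathcal{A}U$.
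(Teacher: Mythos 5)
Your proposal is correct and follows essentially the same route as the paper: dissipativity of $\mathcal{A}-mId$ (with $m=\frac{1}{a}+\frac{\xi}{2\tau}$) combined with the resolvent inclusion $\overline{\mathbb{C}}_0\subset\rho(\mathcal{A})$ from Lemma \ref{lemma23'} yields m-dissipativity, hence generation of a contraction semigroup by Lumer--Phillips, and the bounded perturbation (rescaling by $e^{mt}$) then gives the generation statement for $\mathcal{A}$ itself. The only difference is that you spell out the details the paper leaves implicit (density of the domain, the range condition via $\lambda=1+m$, and the rescaling), which is a faithful expansion rather than a different argument.
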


\subsubsection{Exponential stability of the delay coupled system (\ref{s77})}  

\begin{theorem}
    \rm\label{theorem21'}
For $a\geq\tau$ and
$\ds\xi\geq\frac{2\tau}{a}$, the system (\ref{s77}) is exponentially
stable.
\end{theorem}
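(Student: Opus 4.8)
The plan is to mirror the proof of Theorem \ref{theorem21}, applying the frequency-domain criterion of Lemma \ref{lemma exp.stab} to the operator $\mathcal{A}$. Since Lemma \ref{lemma23'} already establishes $\overline{\mathbb{C}}_0\subset\rho(\mathcal{A})$, condition (\ref{s24}) holds, and it remains only to verify the uniform resolvent bound (\ref{s25}). I would argue by contradiction: supposing (\ref{s25}) fails, there exist $\lambda_n$ with $\textrm{Re}(\lambda_n)\geq 0$, $|\lambda_n|\to+\infty$, and normalized vectors $U_n=(u_n,v_n,\theta_n,z_n)^{\top}\in D(\mathcal{A})$ with $\|U_n\|_{\mathcal{H}}=1$ such that $\|(\lambda_n Id-\mathcal{A})U_n\|_{\mathcal{H}}\to 0$. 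Writing this componentwise yields the analogues of (\ref{s28})--(\ref{s31}), with $A^{1/2}$ replaced by $B^*$ and the coupling terms replaced by $C$ and $C^*$.

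The core of the argument is the dissipation estimate. Using that $\mathcal{A}-mId$ is dissipative (with $m=\frac{1}{a}+\frac{\xi}{2\tau}$, exactly as noted before Lemma \ref{lemma23'}), I would bound $\big|\big((\lambda_n Id-\mathcal{A})U_n,U_n\big)_{\mathcal{H}}\big|$ from below by $\textrm{Re}(\lambda_n)-\textrm{Re}(\mathcal{A}U_n,U_n)_{\mathcal{H}}$ and insert the dissipativity inequality. This produces the positive damping terms $\frac{a}{2}\|B^*v_n\|_H^2$, $\big(\frac{\xi}{2\tau}-\frac{1}{a}\big)\|z_n(1)\|_H^2$ and $\|A^{\alpha/2}\theta_n\|_H^2$, against the single indefinite term $m\|B^*u_n\|_H^2$. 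As in the proof of Theorem \ref{theorem21}, I would absorb the $\|B^*u_n\|_H^2$ term by relating $B^*u_n$ to $B^*v_n$ through the first resolvent equation $\lambda_n B^* u_n-B^* v_n=f_n\to 0$ (the image of (\ref{s28}) under $B^*$), so that $\|B^*u_n\|_H\leq\frac{1}{|\lambda_n|}(\|B^*v_n\|_H+\|f_n\|_H)$ and the factor $\frac{m}{|\lambda_n|}\to 0$ renders it harmless for $n$ large. Since the left-hand side tends to $0$ by (\ref{s27}) while $\textrm{Re}(\lambda_n)\geq 0$, I conclude $B^*v_n\to 0$ and $A^{\alpha/2}\theta_n\to 0$ in $H$.

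From here the convergences cascade exactly as before. The coercivity assumption (\ref{coer}), namely $\|B^*v\|_H\geq c_1\|v\|_H$, upgrades $B^*v_n\to 0$ to $v_n\to 0$ in $H$, and since $D(A^{\alpha/2})\hookrightarrow H$ we get $\theta_n\to 0$; then (\ref{s28}) forces $B^*u_n\to 0$ in $H$. Finally I would integrate the transport equation (\ref{s31}) to obtain the representation $z_n(\rho)=B^*u_n\,e^{-\lambda_n\rho}+\tau\int_0^\rho e^{-\tau\lambda_n(\rho-s)}h_n(s)\,ds$, giving $\|z_n\|_{L^2((0,1),H)}^2\leq 2\|B^*u_n\|_H^2+2\tau^2\|h_n\|^2\to 0$ (using $\textrm{Re}(\lambda_n)\geq 0$ so the exponential is bounded). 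Collecting these, $\|U_n\|_{\mathcal{H}}\to 0$, contradicting the normalization, and (\ref{s25}) follows.

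The main obstacle, such as it is, lies in the dissipation step: one must be slightly careful that the hypotheses (\ref{coer}) and (\ref{comp}) genuinely replace the spectral-inequality manipulations on $A^{\beta-\alpha/2}$ used in the concrete case, so that the sesquilinear estimates and the control of the cross term $(C\theta_n,v_n)$ or $(C^*v_n,\theta_n)$ remain valid. Concretely, (\ref{comp}) guarantees $\|C^*v_n\|_H\leq c_2\|B^*v_n\|_H\to 0$, which is what keeps the coupling from obstructing the argument; verifying that no term involving $C$ survives without a matching damping factor is the one place requiring attention, but it is handled uniformly by (\ref{coer})--(\ref{comp}) and follows the template of Theorem \ref{theorem21} line for line.
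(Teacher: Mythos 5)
Your proposal is correct and follows essentially the same route as the paper: the paper's own proof simply invokes Lemma \ref{lemma exp.stab}, cites Lemma \ref{lemma23'} for the resolvent containment, and declares the remaining resolvent bound to be "the same as in the proof of Theorem \ref{theorem21}" — which is precisely the contradiction argument you carried out, with $A^{1/2}$ replaced by $B^*$, the dissipativity of $\mathcal{A}-mId$ supplying the terms $\|B^*v_n\|^2$, $\|A^{\alpha/2}\theta_n\|^2$, $\|z_n(1)\|^2$, the indefinite term $m\|B^*u_n\|^2$ absorbed via the first resolvent equation, and (\ref{coer}) (together with the strict positivity of $A=BB^*$ and the skew-adjoint cancellation of the $C$, $C^*$ coupling in the real part) yielding the cascade $v_n,\theta_n,B^*u_n,z_n\to 0$.
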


\begin{proof}
Since $\overline{\mathbb{C}}_0\subset\rho(\mathcal{L})$ it suffices to prove that
\begin{equation*}
\underset{\textrm{Re}(\lambda) \geq 0}\sup\|(\lambda I-\mathcal{L})^{-1}\|_{\mathcal{L}(\mathcal{H})}<\infty.
\end{equation*}
The rest of the proof is the same as in the proof of Theorem \ref{theorem21}.
\end{proof}
\subsection{Applications}
\subsubsection{Thermoelastic plate with delay}\label{sec1}

Taking $\alpha=\beta=\frac{1}{2}$, $H=L^2(\Omega)$ where $\Omega$ is a smooth open bounded domain in $\mathbb{R}^n$,  and consider  

\begin{equation}\label{exp11}
\left\{
\begin{array}{ll}
u_{tt}(x,t)+\Delta^2u(x,t-\tau)+a \Delta^2u_{t}(x,t)+\Delta \theta(x,t)=0, &
\quad (x,t)\in\Omega\times(0,+\infty), \\
\theta_{t}(x,t)-\Delta\theta(x,t)-\Delta u_{t}(x,t)=0, &
\quad(x,t)\in\Omega\times(0,+\infty), \\
u(x,t)=\Delta u(x,t)=0, & \quad (x,t)\in\partial \Omega\times(0,+\infty), \\
u(x,0)=u^0(x),u_t(x,0)=u^1(x), & \quad t\in(0,\tau), \\
\theta(x,t)=0, & \quad (x,t)\in\partial \Omega\times(0,+\infty), \\
\theta(x,0)=\theta^0(x), &  \\
-\Delta u(x,t)=f_0(x,t), &\quad-\tau\leq t<0, x\in\Omega,
\end{array}
\right.
\end{equation}
where $\tau$ and $a$ are real positive constants.

\medskip

Here, $A^{1/2}=-\Delta$, with domain $D(A^{1/2})=H^1_0(\Omega)\cap H^2(\Omega)$, and $A=-\Delta^2$, with domain $D(A)=\{u\in H^1_0(\Omega)\cap H^2(\Omega),\,\Delta u\in H^1_0(\Omega)\cap H^2(\Omega) \}.$

\medskip

The operator $\mathcal{A}_{\frac{1}{2},\frac{1}{2}}$ is given by
\begin{equation*}
\ds\mathcal{A}_{\frac{1}{2},\frac{1}{2}}
\left(
\begin{array}{c}
u \\
v \\
\theta \\
z \\
\end{array}
\right)=
\left(
\begin{array}{c}
v \\
\ds-\Delta\big(-z(\cdot,1)+a\Delta v\big)-\Delta \theta \\
\Delta\theta+\Delta v \\
\ds-\frac{1}{\tau}z_{\rho} \\
  \end{array}
\right),
\end{equation*}
with domain
\begin{equation*}
D(\mathcal{A}_{\frac{1}{2},\frac{1}{2}})=\left\{
\begin{array}{c}
(u,v,\theta,z)^{\top}\in  \left( H_{0}^{1}\big(\Omega\big)\cap H^{2}\big(\Omega\big)\right)^3 \times H^{1}\big((0,1),L^{2}(\Omega)\big): \\
\ds \quad z(\cdot,0)=-\Delta u\quad\textrm{ and }\quad z(\cdot,1)-a\Delta v\in H_{0}^{1}\big(\Omega\big)\cap H^{2}\big(\Omega\big)
\end{array}
\right\},
\end{equation*}
in the Hilbert space
\begin{equation*}
\mathcal{H}=\left( H_{0}^{1}\big(\Omega\big)\cap H^{2}\big(\Omega\big)\right)\times
L^{2}\big(\Omega\big)\times L^{2}\big(\Omega\big)\times
L^{2}\big(\Omega\times (0,1)\big).
\end{equation*}

%\noindent In order to set the abstract formulation of system (\ref{s37}),
%we define

%$$H=L^{2}\big((0,L)\big),\quad\quad A=-\frac{d}{dx^{2}},
%\quad\quad D(A)=H^{2}\big((0,L)\big).$$
%Let's recall that $A$ is a strictly positive self adjoint operator and that
%$$\ds D(A^{\frac{1}{2}})=H_{0}^{1}\big((0,L)\big).$$

 Applying Theorem \ref{theorem21} with
$\ds (\beta,\alpha)=\left(\frac{1}{2},\frac{1}{2}\right)\in Q$,
one has the following exponential stability result.

\begin{theorem}
If $\tau\leq a$, the system (\ref{exp11}) is exponentially stable,
namely for $\ds\xi\geq\frac{2\tau}{a}$, the energy
$$\ds E(t)=\frac{1}{2}\left(\int_{\Omega}
\big(|u_{t}(x,t)|^{2}+|\Delta u(x,t)|^{2}+|\theta(x,t)|^{2}\big)dx
+\xi\int_{\Omega}\int_{0}^{1}|\Delta u(x,t-\tau\rho)|^{2}d\rho dx\right),$$
satisfies
$$E(t)\leq Ce^{-wt}E(0),\quad\forall t \geq 0, \, (u^0,u^1,\theta^0,f_0) \in D(\mathcal{A}_{\frac{1}{2},\frac{1}{2}}) $$
for some positive constants $C$ and $w$.
\end{theorem}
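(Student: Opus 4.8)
The plan is to recognize the concrete system (\ref{exp11}) as the particular realization of the abstract delayed system (\ref{s0}) obtained for $\alpha=\beta=\frac12$, and then to invoke Theorem \ref{theorem21} verbatim. First I would record that, on $H=L^2(\Omega)$, the operator $A^{1/2}=-\Delta$ with domain $D(A^{1/2})=H^1_0(\Omega)\cap H^2(\Omega)$ is self-adjoint and strictly positive: this is standard elliptic theory under the hinged boundary conditions $u=\Delta u=0$, strict positivity coming from the Poincar\'e inequality. Consequently $A=(A^{1/2})^2=\Delta^2$ is self-adjoint, strictly positive, and carries exactly the domain displayed in the application. With $\alpha=\beta=\frac12$ one has $A^{\beta}=A^{\alpha}=-\Delta$, and substituting into (\ref{s0}) reproduces (\ref{exp11}), including the signs of the coupling terms $+\Delta\theta$ in the first equation and $-\Delta u_t$ in the second.

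Next I would check the hypotheses of Theorem \ref{theorem21}. The point $(\beta,\alpha)=(\frac12,\frac12)$ lies in $Q$ since $2\beta-\alpha=\frac12\leq1$; the standing assumption $\tau\leq a$ is precisely $a\geq\tau$; and $\xi\geq\frac{2\tau}{a}$ is fixed as in the statement. Because the operator $\mathcal{A}_{\frac12,\frac12}$ and the Hilbert space $\mathcal{H}$ written out in the application coincide with the abstract $\mathcal{A}_{\alpha,\beta}$ and $\mathcal{H}$ of Section \ref{sec21} for this choice of exponents, Theorem \ref{theorem21} applies directly and furnishes constants $C>0$ and $w>0$ such that $\|e^{t\mathcal{A}_{\frac12,\frac12}}\|_{\mathcal{L}(\mathcal{H})}\leq Ce^{-wt}$ for all $t\geq0$.

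Finally I would translate this semigroup estimate into the energy bound. Setting $U(t)=(u,u_t,\theta,z)^{\top}$ with the history variable $z(\rho,t)=A^{1/2}u(t-\tau\rho)=-\Delta u(t-\tau\rho)$, a direct computation of the $\mathcal{H}$-norm gives $\|U(t)\|_{\mathcal{H}}^2=\int_\Omega\big(|u_t|^2+|\Delta u|^2+|\theta|^2\big)\,dx+\xi\int_0^1\int_\Omega|\Delta u(x,t-\tau\rho)|^2\,dx\,d\rho=2E(t)$. For initial data in $D(\mathcal{A}_{\frac12,\frac12})$ one has $U(t)=e^{t\mathcal{A}_{\frac12,\frac12}}U(0)$, whence $2E(t)=\|U(t)\|_{\mathcal{H}}^2\leq C^2e^{-2wt}\|U(0)\|_{\mathcal{H}}^2=2C^2e^{-2wt}E(0)$, which is the claimed inequality after relabeling $C^2$ as $C$ and $2w$ as $w$.

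The only genuine content is the verification in the first paragraph that $-\Delta$, and hence $\Delta^2$, fits the abstract self-adjoint strictly positive framework with the correct domains under the hinged boundary conditions; everything afterwards is bookkeeping. I therefore expect the self-adjointness and domain identification to be the main point to pin down, whereas the passage from the operator-norm decay to the energy decay is immediate once the identity $\|U(t)\|_{\mathcal{H}}^2=2E(t)$ is observed.
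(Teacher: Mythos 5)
Your proposal is correct and follows exactly the paper's route: the paper proves this theorem precisely by invoking Theorem \ref{theorem21} with $(\beta,\alpha)=\left(\frac{1}{2},\frac{1}{2}\right)\in Q$, $A^{1/2}=-\Delta$ on $D(A^{1/2})=H^1_0(\Omega)\cap H^2(\Omega)$, and the identification $\|U(t)\|_{\mathcal{H}}^2=2E(t)$. Your added verifications (self-adjointness and strict positivity of $-\Delta$ under the hinged boundary conditions, the sign bookkeeping for the coupling terms, and the passage from operator-norm decay to the energy estimate) merely make explicit what the paper leaves implicit, and they are all accurate.
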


\subsubsection{Thermoelastic string system with delay}
Consider the system (\ref{s0'"}) with $H=L^2(0,L)$, $A=-\frac{\partial^2}{\partial x^2}:D(A)=H_0^1(0,L)\cap H^2(0,L)\rightarrow L^2(0,L)$, $\alpha=1$, $B=C^*=-\frac{\partial}{\partial x}:D(B)=H^1(0,L)\rightarrow L^2(0,L)$, $B^*=C=\frac{\partial}{\partial x}:D(B^*)=H_0^1(0,L)\rightarrow L^2(0,L)$. We have that $A=BB^*$, $B^*$ satisfies (\ref{coer}), $D(A)\subset D(C)$, $D(B^*)\subset D(C^*)$ and $\|C^*v\|\leq \|B^*v\|, \quad \forall\,v\in D(B^*)$. We thus find the system considered in \cite{KhSh21} with a small difference at the boundary conditions
 of $\theta$ (by considering here Dirichlet conditions instead of Neumann conditions):
 
\begin{equation}\label{s84""}
\left\{
\begin{array}{ll}
u_{tt}(t,x)-u_{xx}(x,t)-au_{xxt}(x,t-\tau)+\theta_x(x,t)=0, &
\quad (x,t)\in(0,L)\times(0,+\infty), \\
\theta_{t}(x,t)-\theta_{xx}(x,t)+u_{xt}(x,t)=0, &
\quad(x,t)\in(0,L)\times(0,+\infty), \\
u(0,t)=u(L,t)=0, & \quad t\in(0,+\infty), \\
u(x,0)=u^0(x),u_t(x,0)=u^1(x), & \quad t\in(0,\tau), \\
\theta(0,t)=\theta(L,t)=0, & \quad t\in(0,+\infty), \\
\theta(x,0)=\theta^0(x), &  \\
u_x(x,t)=f_0(x,t), &\quad-\tau\leq t<0, x\in (0,L).
\end{array}
\right.
\end{equation}

Then, the operator $\mathcal{A}$ is simply written as follows
\begin{equation*}
\ds\mathcal{A}
\left(
\begin{array}{c}
u \\
v \\
\theta \\
z \\
\end{array}
\right)=
\left(
\begin{array}{c}
v \\
\ds \left( z_{x}(\cdot,1)+av_x\right)_x-\theta_x \\
\theta_{xx}-v_x \\
\ds-\frac{1}{\tau}z_{\rho} \\
  \end{array}
\right),
\end{equation*}
with domain
\begin{equation*}
D(\mathcal{A})=\left\{
\begin{array}{c}
(u,v,\theta,z)^{\top}\in  H_{0}^{1}\big(0,L\big) \times H_{0}^{1}\big(0,L\big)
\times  \left(H_{0}^{1}\big(0,L\big)\cap H^{2}\big(0,L\big)\right) \times H^{1}\big((0,1),L^2(0,L)\big): \\
\ds \quad z(\cdot,0)=u_x\quad\textrm{ and }\quad z(\cdot,1)+av_x\in H^{1}\big(0,L\big)
\end{array}
\right\},
\end{equation*}
in the Hilbert space
\begin{equation*}
\mathcal{H}=H_{0}^{1}\big(0,L\big)\times
L^{2}\big(0,L\big)\times L^{2}\big(0,L\big)\times
L^{2}\big((0,1), L^2(0,L)\big).
\end{equation*}

%\noindent In order to set the abstract formulation of system (\ref{s37}),
%we define

%$$H=L^{2}\big((0,L)\big),\quad\quad A=-\frac{d}{dx^{2}},
%\quad\quad D(A)=H^{2}\big((0,L)\big).$$
%Let's recall that $A$ is a strictly positive self adjoint operator and that
%$$\ds D(A^{\frac{1}{2}})=H_{0}^{1}\big((0,L)\big).$$

 Applying Theorem \ref{theorem21'}
one has the following exponential stability result.

\begin{theorem}
If $\tau\leq a$, the system (\ref{s84""}) is exponentially stable.
\end{theorem}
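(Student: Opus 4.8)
The plan is to recognize that system (\ref{s84""}) is a concrete instance of the abstract system (\ref{s0'"}) under the identifications already recorded in the text, so that the proof reduces to verifying the structural hypotheses of Theorem \ref{theorem21'} and then invoking it. First I would confirm the operator identifications: with $H=L^2(0,L)$, $A=-\partial_{xx}$ on $H_0^1(0,L)\cap H^2(0,L)$, $B=C^*=-\partial_x$ on $H^1(0,L)$, and $B^*=C=\partial_x$ on $H_0^1(0,L)$, one checks that $BB^*u=-\partial_x(\partial_x u)=-u_{xx}=Au$ for $u\in H_0^1(0,L)\cap H^2(0,L)$, so $A=BB^*$ holds; since $\alpha=1$ here, $A^\alpha=A$, matching the second equation of (\ref{s84""}).

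Next I would verify the three assumptions on $B$ and $C$. The coercivity condition (\ref{coer}) is precisely the Poincar\'e inequality on $H_0^1(0,L)$: there is $c_1>0$ with $\|B^*v\|_{L^2}=\|v_x\|_{L^2}\geq c_1\|v\|_{L^2}$ for all $v\in H_0^1(0,L)$. The domain inclusions $D(A^\alpha)=D(A)\subset D(C)$ and $D(B^*)\subset D(C^*)$ hold at once, since $D(A)=H_0^1\cap H^2\subset H_0^1=D(C)$ and $D(B^*)=H_0^1\subset H^1=D(C^*)$. Finally, the comparison condition (\ref{comp}) is immediate with $c_2=1$, because $C^*=B=-\partial_x$ and $B^*=\partial_x$ give $\|C^*v\|_{L^2}=\|v_x\|_{L^2}=\|B^*v\|_{L^2}$.

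With all structural hypotheses in place, the assumption $\tau\leq a$ is exactly the hypothesis $a\geq\tau$ required by Theorem \ref{theorem21'}. Choosing $\xi\geq 2\tau/a$ as in that theorem, the generator $\mathcal{A}$ of the semigroup associated to (\ref{s84""}) satisfies $\overline{\mathbb{C}}_0\subset\rho(\mathcal{A})$ (by Lemma \ref{lemma23'}) together with the uniform resolvent bound (\ref{s25}) (by the contradiction argument used in Theorem \ref{theorem21}), whence the semigroup decays exponentially. There is essentially no analytic obstacle at this stage; the only point demanding care is the bookkeeping of the adjoint relations $B^*=C$ and $C^*=B$, and the confirmation that the boundary data encoded in $D(B)=H^1(0,L)$ and $D(B^*)=H_0^1(0,L)$ reproduce exactly the Dirichlet conditions on $u$ and $\theta$ in (\ref{s84""}), so that the abstract first-order formulation (\ref{s77}) genuinely coincides with the given string system.
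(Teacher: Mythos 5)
Your proposal is correct and follows essentially the same route as the paper: the paper's own proof consists of exactly the identifications you list ($A=BB^*$ with $\alpha=1$, the Poincar\'e-type coercivity (\ref{coer}), the domain inclusions, and $\|C^*v\|_H=\|B^*v\|_H$ giving (\ref{comp})), followed by a direct invocation of Theorem \ref{theorem21'} under the hypothesis $a\geq\tau$. Your write-up is in fact slightly more explicit than the paper's, which states these verifications without elaboration before citing the abstract theorem.
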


\section{Well-posedness and stability of the problem (\ref{s1})}

\subsection{Well-posedness of the delayed coupled system (\ref{s1})}

In this subsection, we suppose that $(\beta,\alpha)\in Q$.

 We introduce, as in section \ref{sec21} or in \cite{Ammari}, the auxiliary variable
$$\ds z(\rho,t)=A^{\frac{\alpha}{2}}\theta(t-\tau\rho),\quad\rho\in(0,1), t>0.$$
Then, problem (\ref{s1}) is equivalent to

\begin{equation*}\label{s41}
\ds u''(t)+Au(t)+aAu'(t)-A^{\beta}\theta(t)=0,\quad t>0,
\end{equation*}

\begin{equation*}\label{s42}
\theta'(t)+\kappa A^{\frac{\alpha}{2}}\big(z(1)+\frac{a}{\kappa}
A^{\frac{\alpha}{2}}\theta(t)\big)+A^{\beta}u'(t)=0,\quad t>0,
\end{equation*}

\begin{equation*}\label{s43}
\tau z_{t}(\rho, t)+z_{\rho}(\rho,t)=0,\quad (\rho,t)\in(0,1)\times(0,+\infty),
\end{equation*}

\begin{equation*}\label{s44}
u(0)=u_{0}, u'(0)=u_{1}, \theta(0)=\theta_{0},
\end{equation*}

\begin{equation*}\label{s45}
z(\rho,0)=\psi(-\tau\rho),\quad \rho\in(0,1),
\end{equation*}

\begin{equation*}\label{s46}
\ds z(0,t)=A^{\frac{\alpha}{2}}\theta(t),\quad t>0.
\end{equation*}

\noindent Define
$$U=(u,u',\theta,z)^{\top},$$
then, problem (\ref{s1}) can be formulated as a first order system of the form

\begin{equation}\label{s47}
\left\{
\begin{array}{ll}
U'=\mathcal{A}_{\alpha,\beta}U, & \\
\ds U(0)=\big(x_{0},x_{1},\theta_{0},\psi(-\tau \cdot)\big)^{\top},
\end{array}
\right.
\end{equation}
where the operator $\mathcal{A}_{\alpha,\beta}$ is defined by
$$\mathcal{A}_{\alpha,\beta}
\left(
\begin{array}{c}
u \\
v \\
\theta \\
z \\
\end{array}
\right)= \left(
\begin{array}{c}
v \\
-A^{1/2}\left(A^{1/2} u-A^{\beta-\frac{1}{2}}\theta\right)  \\
\ds-\kappa A^{\frac{\alpha}{2}}\big(z(1)+\frac{a}
{\kappa}A^{\frac{\alpha}{2}}\theta+\frac{1}{\kappa} A^{\beta-\frac{\alpha}{2}}v\big) \\
\ds-\frac{1}{\tau}z_{\rho} \\
  \end{array}
\right),$$
with domain
$$D(\mathcal{A}_{\alpha,\beta})=\left\{
\begin{array}{c}
(u,v,\theta,z)^{\top}\in D(A^{\frac{1}{2}})\times D(A^{\frac{1}{2}})
\times D(A^{\frac{\alpha}{2}})\times H^{1}\big((0,1),H\big): \quad z(0)=A^{\frac{\alpha}{2}}\theta, \\
\ds  A^{1/2} u-A^{\beta-\frac{1}{2}}\theta\in D(A^{1/2}), \quad\textrm{ and }\quad z(1)+\frac{a}
{\kappa}A^{\frac{\alpha}{2}}\theta+\frac{1}{\kappa} A^{\beta-\frac{\alpha}{2}}v\in D(A^{\frac{\alpha}{2}})
\end{array}
\right\},$$
in the Hilbert space
$$\mathcal{H}=D(A^{\frac{1}{2}})\times H\times H\times L^{2}\big((0,1),H\big),$$
equipped with the scalar product
$$\ds \big((u,v,\theta,z)^\top,(u_1,v_1,\theta_1,z_1)^\top\big)_{\mathcal{H}}
=\big(A^{\frac{1}{2}}u,A^{\frac{1}{2}}u_1\big)_{H}
+(v,v_1)_{H}+(\theta,\theta_1)_{H}+\xi\int_{0}^{1}(z,z_1)_{H}d\rho,$$
where $\xi$ is a parameter that will be fixed later on. 

%\begin{remark}If $(\beta,\alpha)\in [0,1/2]\times[0,1]$ and $2\beta \leq \alpha$, then $A^{\beta-\frac{1}{2}}\theta\in D(A^{1/2})$, hence $\mathcal{A}_{\beta,\alpha}$ is reduced to
%$$\mathcal{A}_{\beta,\alpha}
%\left(
%\begin{array}{c}
%u \\
%v \\
%\theta \\
%z \\
%\end{array}
%\right)=
%\left(
%\begin{array}{c}
%v \\
%-A u+A^{\beta}\theta  \\
%\ds-\kappa A^{\frac{\alpha}{2}}\big(z(1)+\frac{a}
%{\kappa}A^{\frac{\alpha}{2}}\theta\big)+ A^{\beta}v \\
%\ds-\frac{1}{\tau}z_{\rho} \\
%  \end{array}
%\right),$$
%with domain
%$$D(\mathcal{A}_{\beta,\alpha})=\left\{
%\begin{array}{c}
%(u,v,\theta,z)^{\top}\in D(A)\times D(A^{\frac{1}{2}})
%\times D(A^{\frac{\alpha}{2}})\times H^{1}\big((0,1),H\big): \quad z(0)=A^{\frac{\alpha}{2}}u, \\
%\ds    \quad\textrm{ and }\quad z(1)+\frac{a}
%{\kappa}A^{\frac{\alpha}{2}}\theta\in D(A^{\frac{\alpha}{2}})
%\end{array}
%\right\},$$
%\end{remark}
\begin{lemma}\rm\label{lemma22"}
Suppose that $(\beta,\alpha)\in Q$ and assume that $a\geq \kappa$. Let
$$\ds\mathcal{J}_{a,\kappa,\tau}=\big[\tau\big(a-\sqrt{a^2-\kappa^2}\big),\tau\big(a+\sqrt{a^2-\kappa^2}\big)\big].$$
Then, for
$\ds\xi\in\mathcal{J}_{a,\kappa,\tau}$, the operator
$\mathcal{A}_{\alpha,\beta}$ is dissipative in $\mathcal{H}$.
\end{lemma}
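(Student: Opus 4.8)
The plan is to compute $\textrm{Re}\big(\mathcal{A}_{\alpha,\beta}U,U\big)_{\mathcal{H}}$ for an arbitrary $U=(u,v,\theta,z)^{\top}\in D(\mathcal{A}_{\alpha,\beta})$, exactly as in the proof of Lemma \ref{lemma21}, and to read off that it is nonpositive precisely when $\xi\in\mathcal{J}_{a,\kappa,\tau}$. Pairing the four components of $\mathcal{A}_{\alpha,\beta}U$ against the four slots of the inner product, I first isolate the conservative contributions. The first slot produces $\big(A^{1/2}v,A^{1/2}u\big)_{H}$, and the elastic part of the second slot produces $-\big(A^{1/2}u,A^{1/2}v\big)_{H}$; by self-adjointness of $A^{1/2}$ these are complex conjugates, so their real parts cancel. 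Likewise, the thermoelastic coupling terms $\big(A^{\beta-\frac{1}{2}}\theta,A^{1/2}v\big)_{H}$ (from the second slot) and $-\big(A^{\beta-\frac{\alpha}{2}}v,A^{\alpha/2}\theta\big)_{H}$ (from the third slot) are complex conjugates of one another, as is seen at once from the spectral resolution of $A$, so they also drop out after taking real parts. Here the standing assumption $(\beta,\alpha)\in Q$, i.e. $2\beta-\alpha\leq 1$, is exactly what makes these pairings legitimate: it gives $\beta-\frac{1}{2}\leq\frac{\alpha}{2}$ and $\beta-\frac{\alpha}{2}\leq\frac{1}{2}$, so that $A^{\beta-\frac{1}{2}}\theta$ and $A^{\beta-\frac{\alpha}{2}}v$ are controlled by $A^{\alpha/2}\theta$ and $A^{1/2}v$ respectively, just as $2\beta\leq\alpha+1$ was used in Lemma \ref{lemma21}.

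What remains is the dissipative part. The third slot further contributes $-a\|A^{\frac{\alpha}{2}}\theta\|_{H}^{2}$ together with the cross term $-\kappa\,\textrm{Re}\big(z(1),A^{\alpha/2}\theta\big)_{H}$, while the transport slot $-\frac{\xi}{\tau}\int_{0}^{1}(z_{\rho},z)_{H}\,d\rho$ is treated by integration by parts, giving $\textrm{Re}\int_{0}^{1}(z_{\rho},z)_{H}\,d\rho=\frac{1}{2}\big(\|z(1)\|_{H}^{2}-\|z(0)\|_{H}^{2}\big)$, into which one substitutes the boundary condition $z(0)=A^{\alpha/2}\theta$. Collecting the surviving terms I expect to arrive at
\begin{equation*}
\textrm{Re}\big(\mathcal{A}_{\alpha,\beta}U,U\big)_{\mathcal{H}}=-\Big(a-\frac{\xi}{2\tau}\Big)\|A^{\frac{\alpha}{2}}\theta\|_{H}^{2}-\kappa\,\textrm{Re}\big(z(1),A^{\alpha/2}\theta\big)_{H}-\frac{\xi}{2\tau}\|z(1)\|_{H}^{2}.
\end{equation*}

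To conclude I treat the right-hand side as a quadratic form in the nonnegative quantities $X=\|A^{\frac{\alpha}{2}}\theta\|_{H}$ and $Y=\|z(1)\|_{H}$. Estimating the cross term by Cauchy--Schwarz, $\big|\textrm{Re}(z(1),A^{\alpha/2}\theta)_{H}\big|\leq XY$, it suffices that $\big(a-\frac{\xi}{2\tau}\big)X^{2}-\kappa XY+\frac{\xi}{2\tau}Y^{2}\geq 0$ for all $X,Y\geq 0$, i.e. that the symmetric matrix with diagonal entries $a-\frac{\xi}{2\tau}$ and $\frac{\xi}{2\tau}$ and off-diagonal entry $-\frac{\kappa}{2}$ be positive semidefinite. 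Writing $t=\frac{\xi}{2\tau}$, this reduces to $t\geq 0$, $a-t\geq 0$, and the determinant condition $t^{2}-at+\frac{\kappa^{2}}{4}\leq 0$. The last inequality is solvable exactly when $a\geq\kappa$, and then $t$ must lie in $\big[\frac{a-\sqrt{a^{2}-\kappa^{2}}}{2},\frac{a+\sqrt{a^{2}-\kappa^{2}}}{2}\big]$; since $\frac{a+\sqrt{a^{2}-\kappa^{2}}}{2}\leq a$, the constraints $t\geq 0$ and $a-t\geq 0$ are automatic on this interval. Translating back through $\xi=2\tau t$ gives precisely $\xi\in\mathcal{J}_{a,\kappa,\tau}$, and hence $\textrm{Re}\big(\mathcal{A}_{\alpha,\beta}U,U\big)_{\mathcal{H}}\leq 0$, which is the asserted dissipativity.

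The routine bookkeeping aside, the step I expect to be most delicate is checking that the two coupling terms cancel exactly in real part, rather than merely being mutually controlled: this rests on the self-adjointness of the fractional powers of $A$ and on the admissibility of the exponents guaranteed by $Q$. It is also the reason the damping strength $a$ and the delay strength $\kappa$ enter the $2\times 2$ matrix on the diagonal and off-diagonal respectively, producing the sharp threshold $a\geq\kappa$ together with the admissible window $\mathcal{J}_{a,\kappa,\tau}$ for the weight $\xi$.
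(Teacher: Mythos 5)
Your proposal is correct and follows essentially the same route as the paper: compute $\mathrm{Re}\big(\mathcal{A}_{\alpha,\beta}U,U\big)_{\mathcal{H}}$, note that the conservative and coupling terms cancel in real part, integrate the transport term using $z(0)=A^{\alpha/2}\theta$, and reduce dissipativity to the nonnegativity of a quadratic form in $\|A^{\alpha/2}\theta\|_H$ and $\|z(1)\|_H$ whose discriminant condition $\xi^2-2a\tau\xi+\kappa^2\tau^2\leq 0$ is exactly $\xi\in\mathcal{J}_{a,\kappa,\tau}$. The only difference is presentational: the paper absorbs the cross term via Young's inequality with a tunable parameter $\varepsilon$ and shows a valid $\varepsilon$ exists precisely when $\xi\in\mathcal{J}_{a,\kappa,\tau}$, while you check positive semidefiniteness of the associated $2\times 2$ matrix directly, which amounts to the same computation.
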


\begin{proof}
Take $U=(u,v,\theta,z)^{\top}\in D(\mathcal{A}_{\alpha,\beta})$.
Then, we have
\begin{eqnarray*}
% \nonumber to remove numbering (before each equation)
\ds\big(\mathcal{A}_{\alpha,\beta}U,U\big)_{\mathcal{H}}&=&
\big(A^{\frac{1}{2}}v,A^{\frac{1}{2}}u\big)_{H}-
\big(A^{\frac{1}{2}}u,A^{\frac{1}{2}}v\big)_{H}
+(A^{\beta-\frac{1}{2}}\theta,A^{\frac{1}{2}}v)_{H}-(A^{\beta-\frac{\alpha}{2}}v+\kappa z(1),A^{\frac{\alpha}{2}}\theta)_{H} \\
&-&a\|A^{\frac{\alpha}{2}}\theta\|_{H}^{2}-
\frac{\xi}{\tau}\int_{0}^{1}(z_{\rho},z)_{H}d\rho.
\end{eqnarray*}

\noindent Using the Young's inequality and that
$\ds z(0)=A^{\frac{\alpha}{2}}\theta$,
we find that, for every $\varepsilon>0$,

\begin{eqnarray*}
% \nonumber to remove numbering (before each equation)
Re\big(\mathcal{A}_{\alpha,\beta}U,U\big)_{\mathcal{H}}&\leq&
\bigg(\frac{\kappa}{2\varepsilon}-\frac{\xi}{2\tau}\bigg)\|z(1)\|_H^{2}+
\bigg(\frac{\varepsilon\kappa}{2}-a+\frac{\xi}{2\tau}\bigg)
\|A^{\frac{\alpha}{2}}\theta\|_H^{2}.
\end{eqnarray*}

\noindent An easy computation shows that

$$\ds\frac{\kappa\tau}{\xi}\leq\frac{2}{\kappa}\left(a-\frac{\xi}{2\tau}\right)
\Longleftrightarrow\xi\in\mathcal{J}_{a,\kappa,\tau}.$$
Hence, once we have $\xi\in\mathcal{J}_{a,\kappa,\tau}$,
one can choose
$\ds \varepsilon=:\varepsilon_0\in\left[\frac{\kappa\tau}{\xi},\frac{2}{\kappa}
\big(a-\frac{\xi}{2\tau}\big)\right]$.
Thus,
\begin{equation}\label{s48"}
Re\big(\mathcal{A}_{\alpha,\beta}U,U\big)_{\mathcal{H}}\leq 0,\quad
\forall \, U \in D(\mathcal{A}_{\alpha,\beta}),
\end{equation}
that is, $\mathcal{A}_{\alpha,\beta}$ is dissipative
\end{proof}

In fact, we will prove that $\mathcal{A}_{\alpha,\beta}$ generates a $\mathcal{C}_0$-semigroup of contractions. For this we use the following result together with Lemma \ref{lemma22"},

\begin{lemma}\rm\label{lemma23"}
For $(\beta,\alpha)\in Q$ and
$\ds\xi>0$ we have
\begin{equation}\label{s49"}
0\in\rho\big(\mathcal{A}_{\alpha,\beta}\big).
\end{equation}
\end{lemma}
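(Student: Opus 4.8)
The plan is to show directly that $\mathcal{A}_{\alpha,\beta}$ maps $D(\mathcal{A}_{\alpha,\beta})$ bijectively onto $\mathcal{H}$ with bounded inverse (which is exactly $0\in\rho(\mathcal{A}_{\alpha,\beta})$, since $0\cdot Id-\mathcal{A}_{\alpha,\beta}=-\mathcal{A}_{\alpha,\beta}$), by solving the equation $\mathcal{A}_{\alpha,\beta}U=F$ explicitly. Fix $F=(f,g,p,h)^{\top}\in\mathcal{H}$, so $f\in D(A^{1/2})$, $g,p\in H$ and $h\in L^{2}\big((0,1),H\big)$, and seek $U=(u,v,\theta,z)^{\top}\in D(\mathcal{A}_{\alpha,\beta})$. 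Reading off the four components, the first line gives $v=f$, which already lies in $D(A^{1/2})$, and the last line is the first order ODE $-\tfrac{1}{\tau}z_{\rho}=h$, whose solution subject to the boundary condition $z(0)=A^{\alpha/2}\theta$ is $z(\rho)=A^{\alpha/2}\theta-\tau\int_{0}^{\rho}h(s)\,ds$. In particular $z(1)=A^{\alpha/2}\theta+z_{0}$ with $z_{0}:=-\tau\int_{0}^{1}h(s)\,ds\in H$.

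Next I would reduce the remaining two lines to closed formulas. Substituting $z(1)$ and $v=f$ into the third line and using $A^{\alpha/2}A^{\beta-\alpha/2}=A^{\beta}$, the leading factor $A^{\alpha/2}$ is inverted once, leaving the algebraic identity $(a+\kappa)A^{\alpha}\theta=-p-\kappa A^{\alpha/2}z_{0}-A^{\beta}f$, hence $\theta=-\tfrac{1}{a+\kappa}A^{-\alpha}\big(p+\kappa A^{\alpha/2}z_{0}+A^{\beta}f\big)$. Inserting this into the second line gives $A^{1/2}u=A^{\beta-1/2}\theta-A^{-1/2}g$, a closed formula for $u$. Since every step determines its unknown uniquely, injectivity of $\mathcal{A}_{\alpha,\beta}$ is automatic; the substance is surjectivity, i.e. checking that the candidate $(u,v,\theta,z)$ genuinely lies in $D(\mathcal{A}_{\alpha,\beta})$.

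That verification is where the real work sits, and it is precisely where $(\beta,\alpha)\in Q$, i.e. $2\beta-\alpha\le 1$, is used. For $\theta\in D(A^{\alpha/2})$ I would compute $A^{\alpha/2}\theta=-\tfrac{1}{a+\kappa}\big(A^{-\alpha/2}p+\kappa z_{0}+A^{\beta-\alpha/2}f\big)$ and observe that $A^{-\alpha/2}$ is bounded on $H$ (as $A$ is strictly positive), that $z_{0}\in H$, and that $A^{\beta-\alpha/2}f\in H$ because $\beta-\tfrac{\alpha}{2}\le\tfrac12$ renders $A^{\beta-\alpha/2}$ bounded from $D(A^{1/2})$ into $H$. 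Likewise, $u\in D(A^{1/2})$ requires $A^{\beta-1/2}\theta\in H$, which holds since $\beta-\tfrac12\le\tfrac{\alpha}{2}$ (again equivalent to $2\beta-\alpha\le1$) and $\theta\in D(A^{\alpha/2})$. The two compatibility conditions of the domain are then immediate: $A^{1/2}u-A^{\beta-1/2}\theta=-A^{-1/2}g\in D(A^{1/2})$ because $g\in H$, and $z(1)+\tfrac{a}{\kappa}A^{\alpha/2}\theta+\tfrac{1}{\kappa}A^{\beta-\alpha/2}v\in D(A^{\alpha/2})$ because its image under $\kappa A^{\alpha/2}$ equals $-p\in H$ by the third equation. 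Finally, each operation above uses only bounded operators and integration, so the construction yields an estimate $\|U\|_{\mathcal{H}}\le C\|F\|_{\mathcal{H}}$, giving boundedness of the inverse and hence $0\in\rho(\mathcal{A}_{\alpha,\beta})$. The main obstacle is purely the bookkeeping of fractional powers: one must ensure that every negative or "excess" power of $A$ appearing in $\theta$ and $u$ is absorbed by the inequality $2\beta-\alpha\le1$, rather than any genuine analytic difficulty.
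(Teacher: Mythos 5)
Your proposal is correct and takes essentially the same route as the paper: both invert $\mathcal{A}_{\alpha,\beta}$ explicitly by solving the transport equation for $z$ with $z(0)=A^{\alpha/2}\theta$, reading off closed formulas for $\theta$ and then $u$ (your expressions, once the negative powers are distributed, collapse to exactly the formulas the paper writes down), and both use $2\beta\le 1+\alpha$ to ensure every fractional power of $A$ is bounded on the relevant space and to obtain the estimate $\|U\|_{\mathcal{H}}\le C\|F\|_{\mathcal{H}}$. If anything, you carry out the domain-membership verification in more detail than the paper, which merely asserts it.
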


\begin{proof}
\noindent Fix $F=(f,g,p,h)^{\top}\in \mathcal{H}$. We seek
a unique solution $U=(u,v,\theta,z)^{\top}\in D(\mathcal{A}_{\alpha,\beta})$ of
\begin{equation*}
\mathcal{A}_{\alpha,\beta}U=F,\label{3.9'}
\end{equation*}

that is, verifying
\begin{equation}\label{s50"}
\left\{
\begin{array}{ll}
v=f, &  \\
\ds-A^{1/2}\big(A^{1/2} u-A^{\beta-\frac{1}{2}}\theta\big)=g, &  \\
-\kappa A^{\frac{\alpha}{2}}\big(z(1)+\frac{a}
{\kappa}A^{\frac{\alpha}{2}}\theta+\frac{1}{\kappa} A^{\beta-\frac{\alpha}{2}}v\big)=p, &  \\
\ds-\frac{1}{\tau}z_\rho=h. &
\end{array}
\right.
\end{equation}

First, to determine z, we set $\ds z(0)=A^{\frac{\alpha}{2}}\theta$. Then,
by $(\ref{s50"})_{4}$, we get
\begin{equation*}\label{s52"}
\ds z(\rho)=A^{\frac{\alpha}{2}}\theta-\tau
\int_{0}^{\rho}h(s)ds.
\end{equation*}
and
\begin{equation*}\label{s53"}
z(1)=A^{\frac{\alpha}{2}}\theta-z_{0}
\end{equation*}
with
\begin{equation*}\label{s54"}
z_{0}=\tau \int_{0}^{1}h(s)
ds\in L^{2}\big((0,1),H\big).
\end{equation*}

Second, define
\begin{eqnarray*}
v:=f,\;\;\theta:=-\frac{1}{\kappa+a}A^{-\alpha/2}\left( A^{-\alpha/2}p+A^{\beta-\alpha/2}f-\kappa z_0\right) ,\\
u:=-A^{-1/2}\left( A^{-1/2}g+\frac{1}{\kappa+a}A^{\beta-1/2-\alpha/2}\Big(A^{-\alpha/2}p+A^{\beta-\alpha/2}f-\kappa z_0\Big
)\right).
\end{eqnarray*}

We have $U=(u,v,\theta,z)^{\top}\in D(\mathcal{A}_{\alpha,\beta})$, $\mathcal{A}_{\alpha,\beta}U=F$ and there exists a constant $c>0$ such that $\|U\|_{\mathcal{H}}\leq c\|F\|_{\mathcal{H}}$ , where we have used that $2\beta\leq 1+\alpha$.
\end{proof}

Now, combining the results in Lemma \ref{lemma22"} and Lemma \ref{lemma23"}, to derive the following result:

\begin{proposition}\label{p4.1"}
Suppose that $(\beta,\alpha)\in Q$, assume that $a\geq\kappa$ and let
$\ds\xi\in\mathcal{J}_{a,\kappa,\tau}$.
 Then, the system (\ref{s1}) is well posed.
More precisely, the operator $\mathcal{A}_{\alpha,\beta}$ generates a
$\mathcal{C}_0$-semigroup of contractions on $\mathcal{H}$.
Thus, for every
$(u_0,v_0,\theta_0,z_0)\in \mathcal{H}$, there exists a unique solution
$(u,v,\theta,z)\in\mathcal{C}\big((0,+\infty),\mathcal{H}\big)$ of (\ref{s47}).
In addition, if the initial datum
$(u_0,v_0,\theta_0,z_0)\in D(\mathcal{A}_{\alpha,\beta})$,
then the solution satisfies
$(u,u^\prime,\theta,z)\in\mathcal{C}\big((0,+\infty),D(\mathcal{A}_{\alpha,\beta})\big)\cap
\mathcal{C}^{1}\big((0,+\infty),\mathcal{H}\big)$.
\end{proposition}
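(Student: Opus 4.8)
The plan is to assemble the two preceding lemmas into a single application of the Lumer--Phillips theorem. First I would record that $\mathcal{A}_{\alpha,\beta}$ is densely defined on $\mathcal{H}$: its domain contains the finite linear combinations of eigenvectors of $A$ (in the $u,v,\theta$ slots) coupled with smooth functions of $\rho$ obeying the compatibility relation $z(0)=A^{\alpha/2}\theta$, and such elements are dense in $\mathcal{H}=D(A^{1/2})\times H\times H\times L^{2}\big((0,1),H\big)$. Under the standing hypotheses $(\beta,\alpha)\in Q$, $a\geq\kappa$ and $\xi\in\mathcal{J}_{a,\kappa,\tau}$, Lemma~\ref{lemma22"} already provides the dissipativity of $\mathcal{A}_{\alpha,\beta}$, so the only thing left is a surjectivity (range) statement.

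The delicate point I would flag is that Lemma~\ref{lemma23"} furnishes a range condition only at $\lambda=0$, namely $0\in\rho(\mathcal{A}_{\alpha,\beta})$, whereas the Lumer--Phillips theorem demands $\mathrm{Range}(\lambda_0 I-\mathcal{A}_{\alpha,\beta})=\mathcal{H}$ for some $\lambda_0>0$. I would bridge this as follows. Since $0\in\rho(\mathcal{A}_{\alpha,\beta})$, the operator $\mathcal{A}_{\alpha,\beta}^{-1}$ is bounded, whence $\mathcal{A}_{\alpha,\beta}$ is closed and its resolvent set $\rho(\mathcal{A}_{\alpha,\beta})$ is open. Consequently a sufficiently small $\lambda_0>0$ still belongs to $\rho(\mathcal{A}_{\alpha,\beta})$, so that $\lambda_0 I-\mathcal{A}_{\alpha,\beta}$ is bijective and in particular onto. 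Having density of the domain, dissipativity from Lemma~\ref{lemma22"}, and this range condition at a genuinely positive $\lambda_0$, the Lumer--Phillips theorem then yields that $\mathcal{A}_{\alpha,\beta}$ is m-dissipative and generates a $\mathcal{C}_0$-semigroup of contractions $\big(e^{t\mathcal{A}_{\alpha,\beta}}\big)_{t\geq 0}$ on $\mathcal{H}$.

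Finally, the well-posedness assertions are a direct transcription of classical semigroup theory. For every initial datum $(u_0,v_0,\theta_0,z_0)\in\mathcal{H}$ the orbit $U(t)=e^{t\mathcal{A}_{\alpha,\beta}}U(0)$ is the unique mild solution and lies in $\mathcal{C}\big((0,+\infty),\mathcal{H}\big)$, while for $(u_0,v_0,\theta_0,z_0)\in D(\mathcal{A}_{\alpha,\beta})$ the Hille--Yosida regularity theorem upgrades it to a classical solution $(u,u^\prime,\theta,z)\in\mathcal{C}\big((0,+\infty),D(\mathcal{A}_{\alpha,\beta})\big)\cap\mathcal{C}^{1}\big((0,+\infty),\mathcal{H}\big)$ of the Cauchy problem (\ref{s47}).

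I expect no serious obstacle: essentially all of the analytic content is already packaged in Lemmas~\ref{lemma22"} and~\ref{lemma23"}, and the proposition is a routine synthesis. The single step that genuinely requires care is the passage from the one resolvent point $0\in\rho(\mathcal{A}_{\alpha,\beta})$ to a positive $\lambda_0$ with full range, which is exactly what the closedness of $\mathcal{A}_{\alpha,\beta}$ and the openness of the resolvent set deliver; everything else is textbook.
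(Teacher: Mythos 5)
Your overall strategy coincides with the paper's: the paper obtains this proposition precisely by combining Lemma \ref{lemma22"} (dissipativity) with Lemma \ref{lemma23"} (the single resolvent point $0\in\rho(\mathcal{A}_{\alpha,\beta})$), and your bridge from that one point to a genuinely positive $\lambda_0$ --- closedness of $\mathcal{A}_{\alpha,\beta}$ because its resolvent set is nonempty, openness of the resolvent set, hence $\lambda_0\in\rho(\mathcal{A}_{\alpha,\beta})$ for small $\lambda_0>0$, then Lumer--Phillips --- is exactly the standard way of making that combination rigorous (it is the argument behind the Liu--Zheng variant of Lumer--Phillips: dissipative plus $0\in\rho$ implies generator of a contraction semigroup). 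The final well-posedness and regularity statements are indeed routine semigroup theory.

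There is, however, one step that is wrong as written: your density argument. The operator $A$ is only assumed to be self-adjoint and strictly positive; it is \emph{not} assumed to have compact resolvent, so it need not possess any eigenvectors at all (think of a multiplication operator with purely continuous spectrum), and then the ``finite linear combinations of eigenvectors of $A$'' span only $\{0\}$. This step must be replaced, and the cleanest repair costs nothing: since $\mathcal{H}$ is a Hilbert space, hence reflexive, a dissipative operator satisfying the range condition $\textrm{Range}(\lambda_0 I-\mathcal{A}_{\alpha,\beta})=\mathcal{H}$ for some $\lambda_0>0$ is automatically densely defined (Pazy, \emph{Semigroups of Linear Operators}, Ch.~1, Thm.~4.6). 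Your argument already supplies that range condition before density is ever needed, so you should reorder the proof: dissipativity and the range condition first, density for free, then Lumer--Phillips. Alternatively one can verify density by hand, approximating each component while respecting the compatibility condition $z(0)=A^{\alpha/2}\theta$ and the two auxiliary domain conditions, but the eigenvector shortcut is not available at this level of generality.
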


%\begin{pf}
%By Lemma \ref{lemma22} and Lemma \ref{lemma23}, we deduce
%that $\mathcal{\mathcal{A}_{\alpha,\beta}}$ is maximal dissipative.
%Therefore, the Lumer-Philippes theorem concludes
%the proof.
%\end{pf}

%\vskip0.5cm

\subsection{Stability of the delayed coupled system (\ref{s1})}
In this section we will prove that the semigroup $e^{t\mathcal{A}_{\alpha,\beta}}$ is exponentially stable in region $S$ and polynomially stable in region $S_1\cup S_2$. The proofs  are based on the frequency domain results for uniform and polynomial stability of a semigroup of contractions.

The first characterizes a $\mathcal{C}_0$-semigroup of contractions to be exponentially stable, it is an invariant of that proposed in Lemma \ref{lemma exp.stab}, and due to \cite{Gea78}.  
\begin{lemma} \label{lem3.3}
A $\mathcal{C}_0$-semigroup of contractions $e^{t\mathcal{L}}$ on a Hilbert space $G$ satisfies
$$\|e^{t\mathcal{L}}\|_{\mathcal{L}(G)}\leq Ce^{-wt}$$
for some constants $C>0$ and $w>0$ if and only if

\begin{equation}\label{s24''}
i\mathbb{R}\subseteq \rho (\mathcal{L})
\end{equation}

and

\begin{equation}\label{s25''}
\underset{\lambda\in\mathbb{R},\,\left| \lambda \right| \rightarrow \infty }{\limsup }\left\| (i\lambda
-\mathcal{L})^{-1}\right\|<\infty .
\end{equation}
\end{lemma}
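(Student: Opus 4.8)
The plan is to establish this as the Gearhart--Pr\"uss characterization of exponential stability on Hilbert space. Both implications are proved, but the genuine content is the sufficiency of $(\ref{s24''})$--$(\ref{s25''})$; the converse is routine.

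\emph{Necessity.} Suppose $\|e^{t\mathcal{L}}\|_{\mathcal{L}(G)}\leq Ce^{-wt}$. Then for every $\lambda$ with $\textrm{Re}(\lambda)>-w$ the Laplace integral converges in operator norm and represents the resolvent,
$$\int_{0}^{\infty}e^{-\lambda t}e^{t\mathcal{L}}\,dt=(\lambda I-\mathcal{L})^{-1},\qquad \textrm{Re}(\lambda)>-w.$$
In particular $i\mathbb{R}\subseteq\rho(\mathcal{L})$, which is $(\ref{s24''})$, and $\|(\lambda I-\mathcal{L})^{-1}\|_{\mathcal{L}(G)}\leq C/(\textrm{Re}(\lambda)+w)\leq C/w$ for $\textrm{Re}(\lambda)\geq 0$, which gives $(\ref{s25''})$ (indeed a bound on the whole closed right half-plane).

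\emph{Sufficiency, first reduction.} Assume $(\ref{s24''})$ and $(\ref{s25''})$. I would first promote these to a \emph{uniform resolvent bound on the closed right half-plane}. From $(\ref{s24''})$ the map $s\mapsto(is-\mathcal{L})^{-1}$ is well defined and continuous, so it is bounded on each compact interval; combined with $(\ref{s25''})$ this yields $M_{0}:=\sup_{s\in\mathbb{R}}\|(is-\mathcal{L})^{-1}\|_{\mathcal{L}(G)}<\infty$. Since $e^{t\mathcal{L}}$ is a contraction semigroup one has $\|(\lambda I-\mathcal{L})^{-1}\|\leq 1/\textrm{Re}(\lambda)$ for $\textrm{Re}(\lambda)>0$, and a Neumann-series perturbation in the real direction (valid while $\textrm{Re}(\lambda)<1/M_{0}$) extends $M_{0}$ off the axis. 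Together these give $M:=\sup_{\textrm{Re}(\lambda)\geq 0}\|(\lambda I-\mathcal{L})^{-1}\|_{\mathcal{L}(G)}<\infty$.

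\emph{Sufficiency, the Plancherel step.} This is where the main obstacle lies, and it is resolved using that $G$ is Hilbert. Fix $x\in G$ and $\varepsilon>0$. The function $s\mapsto(\varepsilon+is-\mathcal{L})^{-1}x$ is the Fourier transform of $t\mapsto\mathbf{1}_{t\geq 0}\,e^{-\varepsilon t}e^{t\mathcal{L}}x$, so the vector-valued Plancherel identity gives
$$\int_{\mathbb{R}}\big\|(\varepsilon+is-\mathcal{L})^{-1}x\big\|_{G}^{2}\,ds=2\pi\int_{0}^{\infty}e^{-2\varepsilon t}\big\|e^{t\mathcal{L}}x\big\|_{G}^{2}\,dt.$$
The difficulty is that the pointwise bound $M$ is \emph{not} integrable in $s$, so naively letting $\varepsilon\downarrow 0$ fails; moreover a bound valid only on the dense set $D(\mathcal{L})$ would not suffice. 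The device is the resolvent identity shifting the abscissa to $1$,
$$(\varepsilon+is-\mathcal{L})^{-1}=(1+is-\mathcal{L})^{-1}+(1-\varepsilon)(\varepsilon+is-\mathcal{L})^{-1}(1+is-\mathcal{L})^{-1},$$
which yields $\|(\varepsilon+is-\mathcal{L})^{-1}x\|_{G}\leq(1+M)\|(1+is-\mathcal{L})^{-1}x\|_{G}$ for $0<\varepsilon<1$; and applying Plancherel once more together with contractivity gives $\int_{\mathbb{R}}\|(1+is-\mathcal{L})^{-1}x\|_{G}^{2}\,ds=2\pi\int_{0}^{\infty}e^{-2t}\|e^{t\mathcal{L}}x\|_{G}^{2}\,dt\leq\pi\|x\|_{G}^{2}$. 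Feeding this back and letting $\varepsilon\downarrow 0$ by monotone convergence on the right-hand side, one obtains, \emph{for every} $x\in G$,
$$\int_{0}^{\infty}\big\|e^{t\mathcal{L}}x\big\|_{G}^{2}\,dt\leq\frac{(1+M)^{2}}{2}\,\|x\|_{G}^{2}.$$
Finally, by Datko's theorem a $\mathcal{C}_{0}$-semigroup all of whose orbits are square-integrable on $(0,\infty)$ (uniformly, as here) is exponentially stable, i.e. $\|e^{t\mathcal{L}}\|_{\mathcal{L}(G)}\leq Ce^{-wt}$, which closes the argument. I expect the Plancherel/resolvent-shift step to be the crux, the rest being soft functional analysis; in the paper itself this lemma is simply the Gearhart--Pr\"uss result invoked from \cite{Gea78}.
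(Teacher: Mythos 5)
Your proposal is correct, but it is not comparable to a proof in the paper because the paper gives none: Lemma \ref{lem3.3} is invoked there as a known result (the Gearhart--Pr\"uss--Huang theorem), with a bare citation to \cite{Gea78}, just as the companion Lemma \ref{lemma exp.stab} is quoted from \cite{Zwart}. What you supply is the standard self-contained proof of that theorem, and it checks out at every step: necessity via the Laplace representation of the resolvent on $\textrm{Re}(\lambda)>-w$; sufficiency by (i) promoting (\ref{s24''})--(\ref{s25''}) to a uniform bound $M$ on the whole closed right half-plane (continuity of the resolvent on compacts, the contraction estimate $\|(\lambda I-\mathcal{L})^{-1}\|\leq 1/\textrm{Re}(\lambda)$, and a Neumann series off the axis), (ii) the vector-valued Plancherel theorem --- the one place where the Hilbert-space hypothesis is genuinely used --- combined with the correctly applied resolvent identity shifting the abscissa from $\varepsilon$ to $1$, which converts the uniform resolvent bound into the uniform orbit estimate $\int_{0}^{\infty}\|e^{t\mathcal{L}}x\|_{G}^{2}\,dt\leq \frac{(1+M)^{2}}{2}\|x\|_{G}^{2}$, and (iii) Datko's theorem. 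The trade-off is clear: the paper's citation buys brevity and is entirely appropriate for its purposes, while your argument buys a verifiable, self-contained justification at the cost of importing two standard but nontrivial external ingredients (Plancherel for $L^{2}(\mathbb{R},G)$ and Datko) that you treat as known; since the lemma is stated for contraction semigroups, your uses of contractivity are legitimate and could even be relaxed to mere boundedness of the semigroup.
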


The second, due to Borichev and Tomilov \cite{BoTo10} (see also \cite{BattyDuck}), concerns the polynomial stability of a $\mathcal{C}_0$ semigroup of contractions.
 
\begin{lemma}\label{lem3.4}
A $\mathcal{C}_0$ semigroup of contractions $e^{t\mathcal{L}}$ on a Hilbert space $G$ satisfies 
\begin{equation*}
\left\| e^{\mathcal{L}t}\right\|_{\mathcal{L}(\mathcal{D}(\mathcal{L}),\mathcal{H})}
\leq \frac{C}{t^{\frac{1}{\alpha }}},\;\;\;t>0
\end{equation*}
for some constant $C>0$ and for $\alpha >0$ if, and only if, (\ref{s24''})
holds and 
\begin{equation}\label{s25'''}
\underset{\lambda\in\mathbb{R},\,\left| \lambda \right| \rightarrow \infty }{\limsup } \frac{1}{%
\left| \lambda \right| ^{\alpha }}\left\| (\mathbf{i}\lambda -\mathcal{L})^{-1}\right\| <\infty.
\end{equation} 
In this case, we say that the semigroup is of order $1/\alpha$.
\end{lemma}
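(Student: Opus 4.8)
The plan is to establish the Borichev--Tomilov characterization by treating its two implications separately, the decisive ingredient being the Hilbert-space Plancherel identity
\begin{equation*}
\int_{\mathbb{R}}\big\|(\mu+is-\mathcal{L})^{-1}x\big\|^{2}_{G}\,ds
=2\pi\int_{0}^{\infty}e^{-2\mu t}\big\|e^{t\mathcal{L}}x\big\|^{2}_{G}\,dt,
\qquad \mu>0,\ x\in G,
\end{equation*}
which comes from recognizing $s\mapsto (\mu+is-\mathcal{L})^{-1}x$ as the Fourier transform of $t\mapsto e^{-\mu t}e^{t\mathcal{L}}x\,\mathbf{1}_{t>0}$ and invoking that the latter lies in $L^{2}(\mathbb{R};G)$ because the semigroup is contractive. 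I also use throughout the Laplace representation $(\lambda-\mathcal{L})^{-1}=\int_{0}^{\infty}e^{-\lambda t}e^{t\mathcal{L}}\,dt$ for $\mathrm{Re}\,\lambda>0$, and the standing hypothesis $i\mathbb{R}\subset\rho(\mathcal{L})$ present in both conditions. Of the two directions, the one actually used in the applications, namely that the resolvent bound (\ref{s25'''}) forces the decay, is the substantive one, so I would spend most of the effort there.

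For the converse implication (decay forces resolvent growth), which is the easier half, I would exploit the Laplace representation of the $\mathcal{L}^{-1}$-weighted resolvent together with the Plancherel identity above to transfer the time decay $\|e^{t\mathcal{L}}\mathcal{L}^{-1}\|\le C t^{-1/\alpha}$ into the pointwise growth $\|(is-\mathcal{L})^{-1}\|=O(|s|^{\alpha})$ as $|s|\to\infty$, the resolvent identity and $i\mathbb{R}\subset\rho(\mathcal{L})$ being used to pass from the weighted resolvent back to the full one. I would only record the estimates needed for this transfer and not dwell on it.

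The core of the argument is the direction (\ref{s25'''}) $\Rightarrow$ decay. Assuming $\|(is-\mathcal{L})^{-1}\|\le M(1+|s|^{\alpha})$, I would first use a Neumann series to continue the resolvent into a left neighbourhood of the imaginary axis, obtaining a region $\Omega=\{\mu+is:\ \mu\ge -c\,(1+|s|^{\alpha})^{-1}\}$ on which $\|(\lambda-\mathcal{L})^{-1}\|\le M'(1+|s|^{\alpha})$ still holds. I then represent the weighted semigroup by the Cauchy-type contour integral
\begin{equation*}
e^{t\mathcal{L}}\mathcal{L}^{-1}
=\frac{1}{2\pi i}\int_{\partial\Omega}e^{\lambda t}(\lambda-\mathcal{L})^{-1}\mathcal{L}^{-1}\,d\lambda ,
\end{equation*}
where the extra factor $\mathcal{L}^{-1}$ gains one power of $|\lambda|\approx|s|$ through the resolvent identity and the weight $|e^{\lambda t}|=e^{-ct/(1+|s|^{\alpha})}$ saturates exactly at the frequency scale $|s|\approx t^{1/\alpha}$, which is what produces the rate $t^{-1/\alpha}$.

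The main obstacle, and the reason the statement is restricted to Hilbert spaces, is that a purely pointwise operator-norm estimate of this integral diverges (only a logarithmically weaker bound survives regularization, as in the Batty--Duyckaerts theorem). To obtain the sharp rate I would pair $e^{t\mathcal{L}}\mathcal{L}^{-1}x$ against an arbitrary $y\in G$ with $\|y\|\le1$, split the contour into a low-frequency part $|s|\le t^{1/\alpha}$ controlled directly by the operator bound, and a high-frequency part $|s|>t^{1/\alpha}$ on which I apply the Cauchy--Schwarz inequality in $s$ and bound the arising $L^{2}$-norms of $(\lambda-\mathcal{L})^{-1}x$ and $(\overline{\lambda}-\mathcal{L}^{*})^{-1}y$ by the Plancherel identity above (applied to $\mathcal{L}$ and to $\mathcal{L}^{*}$). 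This replaces the divergent integral $\int|s|^{\alpha-1}\,ds$ by a finite quantity and yields $|\langle e^{t\mathcal{L}}\mathcal{L}^{-1}x,y\rangle|\le C t^{-1/\alpha}\|x\|\,\|y\|$, hence $\|e^{t\mathcal{L}}\mathcal{L}^{-1}\|=O(t^{-1/\alpha})$ as claimed. I expect the careful balancing of the two contour pieces at the threshold $|s|\approx t^{1/\alpha}$, and the justification of the Plancherel estimates along the shifted contour lying to the left of the axis, to be the delicate points.
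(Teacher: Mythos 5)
First, a point of comparison: the paper does not prove this lemma at all. It is stated as a known theorem of Borichev and Tomilov, with a citation to \cite{BoTo10} (and \cite{BattyDuck} for the Banach-space version, which carries a logarithmic loss), and is then used as a black box. So your proposal is not an alternative to an argument in the paper; it is an attempt to reprove a deep theorem from the literature. Your sketch does identify the right circle of ideas for the hard implication (\ref{s25'''}) $\Rightarrow$ decay: Neumann-series extension of the resolvent to a region $\Omega$ to the left of $i\mathbb{R}$, a contour representation of the smoothed semigroup, and Hilbert-space $L^{2}$ (Plancherel) estimates to remove the logarithm that purely pointwise bounds leave behind. But the way you assemble these ingredients has genuine gaps, and as written the argument would not close.

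Concretely: (a) your integrand $e^{\lambda t}(\lambda-\mathcal{L})^{-1}\mathcal{L}^{-1}$ contains only \emph{one} resolvent factor, yet your Cauchy--Schwarz step needs two $L^{2}$ factors, $\|(\lambda-\mathcal{L})^{-1}x\|$ and $\|(\overline{\lambda}-\mathcal{L}^{*})^{-1}y\|$. The second factor must be manufactured by inverting the Laplace transform of $t\,e^{t\mathcal{L}}x$, i.e.\ by using $t\,e^{t\mathcal{L}}x=\frac{1}{2\pi i}\int e^{\lambda t}(\lambda-\mathcal{L})^{-2}x\,d\lambda$ together with the splitting $\langle(\lambda-\mathcal{L})^{-2}x,y\rangle=\langle(\lambda-\mathcal{L})^{-1}x,(\overline{\lambda}-\mathcal{L}^{*})^{-1}y\rangle$; this squared-resolvent identity is the key trick of the Hilbert-space proof and is absent from your sketch --- with a single resolvent, Cauchy--Schwarz against the weight $1/|\lambda|$ falls short of $t^{-1/\alpha}$ by a full power. (b) The Plancherel identity you invoke holds only on vertical lines $\mathrm{Re}\,\lambda=\mu>0$; on $\partial\Omega$ it is false, because $t\mapsto e^{-\mu t}e^{t\mathcal{L}}x$ is not square-integrable for $\mu\le 0$ (the semigroup is merely contractive). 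Transferring the $L^{2}$ bound from $\mathrm{Re}\,\lambda=\mu$ to $\partial\Omega$ by the resolvent identity costs a factor of size $1+\mu\,(1+|s|^{\alpha})$, which is bounded only when $\mu\lesssim(1+|s|^{\alpha})^{-1}$; since your high-frequency region $|s|>t^{1/\alpha}$ is unbounded, no single $\mu$ works, and one is forced into a dyadic decomposition in frequency with $\mu$ matched to each band --- the rate $t^{-1/\alpha}$ then comes out of summing the bands, dominated by $|s|\approx t^{1/\alpha}$, not out of one split. (c) Your low-frequency estimate fails outright: at the threshold $|s|\approx t^{1/\alpha}$ the damping is $|e^{\lambda t}|=e^{-ct/(1+|s|^{\alpha})}\approx e^{-c}=O(1)$, so controlling $|s|\le t^{1/\alpha}$ ``directly by the operator bound'' gives $\int^{t^{1/\alpha}}O(|s|^{\alpha-1})\,ds=O(t)$, which diverges instead of being $O(t^{-1/\alpha})$; this is precisely why Batty--Duyckaerts must move the threshold to $(t/\log t)^{1/\alpha}$ and accept the logarithm. (d) Finally, the contour integral does not converge absolutely with a single factor $\mathcal{L}^{-1}$ (the integrand is $O(|s|^{\alpha-1})$); one needs $\mathcal{L}^{-k}$ with $k>\alpha+1$ and then the moment inequality $\|\mathcal{L}^{-1}z\|\lesssim\|\mathcal{L}^{-k}z\|^{1/k}\|z\|^{1-1/k}$ to return to $k=1$. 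These missing steps are the actual substance of the Borichev--Tomilov proof; unless you intend to carry them out, the efficient course is the one the paper takes: quote \cite{BoTo10}.
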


\subsubsection{Exponential stability} 
\begin{theorem}\rm\label{theorem41}
Assume that $a>\kappa$. Then for $\xi\in \mathring{\mathcal{J}}_{a,\kappa,\tau}$,  
 the semigroup $e^{t\mathcal{A}_{\alpha,\beta}}$ is exponentially stable in region $S$.
\end{theorem}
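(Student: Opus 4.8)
The plan is to invoke the frequency-domain characterization for contraction semigroups, Lemma \ref{lem3.3}. Since $a>\kappa$ and $\xi\in\mathring{\mathcal{J}}_{a,\kappa,\tau}\subset\mathcal{J}_{a,\kappa,\tau}$, the operator $\mathcal{A}_{\alpha,\beta}$ is dissipative (Lemma \ref{lemma22"}) and, because $0\in\rho(\mathcal{A}_{\alpha,\beta})$ (Lemma \ref{lemma23"}), it generates a $\mathcal{C}_0$-semigroup of contractions (Proposition \ref{p4.1"}). It then suffices to verify the two conditions $i\mathbb{R}\subseteq\rho(\mathcal{A}_{\alpha,\beta})$ and $\limsup_{|\lambda|\to\infty}\|(i\lambda-\mathcal{A}_{\alpha,\beta})^{-1}\|<\infty$. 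Both are proved by a contradiction/compactness argument patterned on the proof of Theorem \ref{theorem21}: assuming the resolvent bound fails, one produces $\lambda_n\in\mathbb{R}$ and $U_n=(u_n,v_n,\theta_n,z_n)^{\top}\in D(\mathcal{A}_{\alpha,\beta})$ with $\|U_n\|_{\mathcal{H}}=1$, $(i\lambda_n-\mathcal{A}_{\alpha,\beta})U_n\to0$ in $\mathcal{H}$, and $|\lambda_n|\to\infty$; the same estimates specialised to a bounded sequence $\lambda_n\to\lambda_0$ exclude approximate eigenvalues on the imaginary axis and give $i\mathbb{R}\subseteq\rho(\mathcal{A}_{\alpha,\beta})$.

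First I would harvest the thermal and history dissipation. The key point is that for $a>\kappa$ and $\xi$ in the \emph{open} interval $\mathring{\mathcal{J}}_{a,\kappa,\tau}$ the parameter $\varepsilon$ in Lemma \ref{lemma22"} can be chosen so that \emph{both} coefficients in the dissipation identity are \emph{strictly} negative, giving $\mathrm{Re}(\mathcal{A}_{\alpha,\beta}U,U)_{\mathcal{H}}\le -c_1\|z(1)\|_H^2-c_2\|A^{\alpha/2}\theta\|_H^2$ for some $c_1,c_2>0$. Pairing $(i\lambda_n-\mathcal{A}_{\alpha,\beta})U_n$ with $U_n$ and taking real parts forces $c_1\|z_n(1)\|_H^2+c_2\|A^{\alpha/2}\theta_n\|_H^2\to0$, whence $A^{\alpha/2}\theta_n\to0$ and $z_n(1)\to0$ in $H$. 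Since $A$ is strictly positive, $A^{\alpha/2}$ is bounded below, so $\theta_n\to0$ in $H$; integrating the transport equation $i\lambda_n z_n+\tau^{-1}\partial_\rho z_n\to0$ with datum $z_n(0)=A^{\alpha/2}\theta_n$, exactly as in (\ref{s35})--(\ref{s36}) with $A^{\alpha/2}\theta_n$ in place of $A^{1/2}u_n$, yields $\|z_n\|_{L^2((0,1),H)}\to0$. Thus the thermal and memory components vanish and the normalisation collapses to $\|A^{1/2}u_n\|_H^2+\|v_n\|_H^2\to1$.

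The heart of the matter — the step I expect to be the main obstacle — is to transfer this thermal dissipation to the mechanical variables and prove $\|A^{1/2}u_n\|_H\to0$ and $\|v_n\|_H\to0$, which is precisely where the region $S$ enters. Once $z_n(1)$ is known to vanish, the remaining componentwise equations for $(u_n,v_n,\theta_n)$ coincide, up to terms tending to $0$, with the resolvent system of the \emph{undelayed} $\alpha$-$\beta$ thermoelastic operator, so the problem reduces to the classical exponential-stability estimates on $S=\{\max(1-2\beta,2\beta-1)\le\alpha\le2\beta\}$. Concretely, pairing the momentum equation with $u_n$ and using $i\lambda_n u_n-v_n\to0$ together with $2\beta-\alpha\le1$ — valid on $Q\supseteq S$, so that $A^{\beta-\alpha/2}u_n$ is controlled by $A^{1/2}u_n$ — gives $\|A^{1/2}u_n\|_H^2-\|v_n\|_H^2\to0$; a second, independent relation must be extracted from the coupling term $A^{\beta}v_n$ in the heat equation, and it is the inequalities $\alpha\le2\beta$ and $\max(1-2\beta,2\beta-1)\le\alpha$ that render the interpolation estimates bounding that coupling uniform in $n$, yielding $\|v_n\|_H\to0$ and hence $\|A^{1/2}u_n\|_H\to0$. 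This contradicts $\|U_n\|_{\mathcal{H}}=1$ and establishes (\ref{s25''}).

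Finally, rerunning the computation along a bounded subsequence $\lambda_n\to\lambda_0\in\mathbb{R}$ shows no nonzero point of $i\mathbb{R}$ is an approximate eigenvalue, which with $0\in\rho(\mathcal{A}_{\alpha,\beta})$ gives (\ref{s24''}); Lemma \ref{lem3.3} then yields exponential stability. I expect the only genuine difficulty to be the region-$S$ mechanical estimates, reproducing the undelayed thermoelastic analysis — the delay is benign here, acting only through the boundary term $z(1)$ that the strict dissipation already controls — while the hypotheses $a>\kappa$ and $\xi\in\mathring{\mathcal{J}}_{a,\kappa,\tau}$ are exactly what upgrade the dissipation from $\le0$ to the strictly negative bound needed to launch the argument.
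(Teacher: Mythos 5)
Your proposal follows the paper's skeleton exactly: Lemma \ref{lem3.3}, a contradiction sequence, strict dissipativity from $\xi\in\mathring{\mathcal{J}}_{a,\kappa,\tau}$ giving $z_n(1)\to0$ and $A^{\alpha/2}\theta_n\to0$, then the pairing that yields $\|A^{1/2}u_n\|_H^2-\|v_n\|_H^2\to0$, and the final treatment of $i\mathbb{R}\subseteq\rho(\mathcal{A}_{\alpha,\beta})$ via bounded sequences. However, at what you yourself call the heart of the matter — proving $\|v_n\|_H\to0$ — there is a genuine gap, and the justification you offer for deferring it is flawed. You claim that once $z_n(1)\to0$ the remaining equations "coincide, up to terms tending to $0$, with the resolvent system of the undelayed $\alpha$-$\beta$ operator," so that the classical region-$S$ estimates apply. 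This is not true at the level of the equations in $H$: the delay enters the heat equation as $\kappa A^{\alpha/2}z_n(1)$, and $A^{\alpha/2}$ is unbounded, so $\|z_n(1)\|_H\to0$ does \emph{not} make this term small. The delay term is only neutralized \emph{inside} a multiplier argument in which it is paired against a vector on which $A^{\alpha/2}$ can be transferred and absorbed by a bounded operator — and this is precisely where the region $S$ is needed, not merely to reproduce undelayed interpolation estimates.

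Concretely, the paper's mechanism (which your proposal is missing) is a pair of cancelling multiplier identities. First, take the inner product of (\ref{s63"}) with $A^{-\beta}\theta_n$: the terms $(A^{1/2}u_n,A^{1/2-\beta}\theta_n)_H$ and $(A^{\beta-1/2}\theta_n,A^{1/2-\beta}\theta_n)_H$ vanish because $A^{1/2-\beta}\theta_n=A^{(1-2\beta-\alpha)/2}\,A^{\alpha/2}\theta_n$ and $1-2\beta\leq\alpha$ (the first $S$-inequality), leaving $\mathbf{i}\lambda_n(v_n,A^{-\beta}\theta_n)_H\to0$. Second, take the inner product of (\ref{s64"}) with $A^{-\beta}v_n$: the coupling term produces exactly $\|v_n\|_H^2$, while the delay and damping terms become $\kappa\left(z_n(1),A^{\alpha/2-\beta}v_n\right)_H$ and $a\left(A^{\alpha/2}\theta_n,A^{\alpha/2-\beta}v_n\right)_H$, which tend to zero because $A^{\alpha/2-\beta}$ is \emph{bounded} precisely when $\alpha\leq 2\beta$ (the second $S$-inequality) and $v_n$ is bounded; the remaining term $\mathbf{i}\lambda_n(\theta_n,A^{-\beta}v_n)_H$ is, up to conjugation and sign, the quantity shown to vanish in the first identity. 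This yields $\|v_n\|_H^2\to0$, and only then does (\ref{s83"}) give $\|A^{1/2}u_n\|_H\to0$. Without exhibiting these two pairings and their cross-cancellation, the assertion that "the classical exponential-stability estimates on $S$" finish the proof leaves unverified exactly the point where the delay could obstruct them.
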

\begin{proof}
By Lemma \ref{lem3.3} we need to check conditions (\ref{s24''}) and (\ref{s25''}).

\medskip

Suppose that (\ref{s25''}) is not true, then there exists a sequence $(\lambda_n)$ of real numbers, with $\lambda_n\rightarrow \infty$ (without loss of generality, we suppose that $\lambda_n>0$) and a sequence
of unit vectors $U_n=(u_n,v_n,\theta_n,z_n)^\top\in D(\mathcal{A}_{\alpha,\beta})$
with
\begin{equation}\label{s60"}
\underset{n\rightarrow+\infty}\lim\|\big(\mathbf{i}\lambda_n I-\mathcal{A}_{\beta,\alpha}\big)U_n\|_{\mathcal{H}}=0,
\end{equation}

\noindent that is

\begin{equation}\label{s61"}
\|A^{\frac{1}{2}}u_n\|_H^{2}+\|v_n\|_H^{2}+\|\theta_n\|_H^{2}+\xi\int_{0}^{1}\|z_n(\rho)\|_Hd\rho=1,
\quad\forall n\in\N
\end{equation}

and

\begin{equation}\label{s62"}
\ds\mathbf{i}\lambda_n A^{\frac{1}{2}}u_n-A^{\frac{1}{2}}v_n\rightarrow0 \textrm{ in } H,
\end{equation}

\begin{equation}\label{s63"}
\ds\mathbf{i}\lambda_n v_n+A^{1/2}\big(A^{1/2}u_n-A^{\beta-\frac{1}{2}}\theta_n\big)\rightarrow0 \textrm{ in } H,
\end{equation}

\begin{equation}\label{s64"}
\ds\mathbf{i}\lambda_n\theta_n+\kappa A^{\frac{\alpha}{2}}
\big(z_n(1)+\frac{a}{\kappa}A^{\frac{\alpha}{2}}\theta_n
+\frac{1}{\kappa} A^{\beta-\frac{\alpha}{2}}v_n\big)\rightarrow0 \textrm{ in } H,
\end{equation}

\begin{equation}\label{s65"}
\ds\mathbf{i}\lambda_nz_n+\frac{1}{\tau}\partial_{\rho}z_n\equiv
h_n\rightarrow0 \textrm{ in }L^2\big((0,1)H\big).
\end{equation}
Since $\xi\in \mathring{\mathcal{J}}_{a,\kappa,\tau}$,
one can choose, as in the proof of Lemma \ref{lemma22"},
$\ds \varepsilon=:\varepsilon_0 \in \left]\frac{\kappa\tau}{\xi},
\frac{2}{\kappa}\left(a-\frac{\xi}{2\tau}\right)\right[$. 

\medskip

Therefore, we get

\begin{eqnarray}
% \nonumber to remove numbering (before each equation)
\textrm{Re}\bigg((\mathbf{i}\lambda_n I-\mathcal{A}_{\alpha,\beta})U_n,U_n\bigg)_{\mathcal{H}}
&=&-\textrm{Re}
(\mathcal{A}_{\alpha,\beta}U_n,U_n)_{\mathcal{H}} \notag \\
 &\geq&\left( \frac{\xi}{2\tau}-
\frac{\kappa}{2\varepsilon_0}\right) \|z_n(1)\|_H^{2}
+\left(a-\frac{\xi}{2\tau}-\frac{\varepsilon_0\kappa}{2}\right)
\|A^{\frac{\alpha}{2}}\theta_n\|_H^{2}\geq 0.\label{s66-"}
\end{eqnarray}
 It follows (taking in mind (\ref{s60"})) that

\begin{equation}\label{s66"}
 z_n(1)\underset{n\rightarrow+\infty}\rightarrow0\quad \text{ and }\quad
A^{\frac{\alpha}{2}}\theta_n\underset{n\rightarrow+\infty}\rightarrow0\textrm{ in }H.
\end{equation}
Hence,

\begin{equation}\label{s67"}
\theta_n\underset{n\rightarrow+\infty}\rightarrow0\textrm{ in }H.
\end{equation}

%\noindent Since $\ds\beta\leq\frac{\alpha}{2}$, then we get by (\ref{s66})
%\begin{equation}\label{s68}
%A^{\beta}\theta_n\underset{n\rightarrow+\infty}\rightarrow0\textrm{ in }H.
%\end{equation}

Now, taking the inner product of (\ref{s62"}) with $\ds \frac{1}{\lambda_n} A^{\frac{1}{2}}u_n$ and (\ref{s63"}) with $\ds \frac{1}{\lambda_n} v_n$  in $H$, respectively,  in $H$ to get 
\begin{equation}\label{s69"}
\mathbf{i}\|A^{\frac{1}{2}}u_n\|_H^{2}-\frac{1}{\lambda_n}(A^{\frac{1}{2}}v_n,A^{\frac{1}{2}}u_n)_{H}
\underset{n\rightarrow+\infty}\rightarrow0.
\end{equation}

%\noindent Thus,

%\begin{equation}\label{s70}
%\textrm{Re}(\lambda_n)\|A^{\frac{1}{2}}u_n\|^{2}-
%\textrm{Re}(A^{\frac{1}{2}}v_n,A^{\frac{1}{2}}u_n)_{H}
%\underset{n\rightarrow+\infty}\rightarrow0.
%\end{equation}
and

\begin{equation}\label{s71"}
-\mathbf{i}\|v_n\|_H^{2}+\frac{1}{\lambda_n}(A^{\frac{1}{2}}v_n,A^{\frac{1}{2}}u_n)_{H}-\frac{1}{\lambda_n}(A^{\frac{1}{2}}v_n,A^{\beta-\frac{1}{2}}\theta_n)_{H}
\underset{n\rightarrow+\infty}\rightarrow0\textrm{ in }H.
\end{equation}

%Since $A^{\beta-\frac{1}{2}}\theta_n=o(1)$ (by (\ref{s66}) and the fact that $\beta-\frac{1}{2}leq \frac{\alpha}{2}$) and $\frac{1}{\lambda_n}A^{\frac{1}{2}}v_n$ is bounded, the third term in (\ref{s71}) converge to $0$.  Then  summing up (\ref{s69}) and (\ref{s71}) to get
%\begin{equation*}\label{s82"}
%\ds\lambda_n\big(\|A^{\frac{1}{2}}u_n\|_H^{2}+\|v_n\|^{2}\big)+\overline{\lambda_n}\|\theta_n\|_H^{2}
%\underset{n\rightarrow+\infty}\rightarrow0.
%\end{equation*}
Note that $\frac{1}{\lambda_n}A^{1/2}v_n$ is bounded (by (\ref{s62"}) and the boundedness of $A^{1/2}u_n$) and $A^{\frac{\alpha}{2}}\theta_n\underset{n\rightarrow+\infty}\rightarrow0$ (by (\ref{s66"}) and the fact that $\beta-\frac{1}{2}\leq \frac{\alpha}{2}$). Thus
\begin{equation}\label{3.19}
\frac{1}{\lambda_n}(A^{\frac{1}{2}}v_n,A^{\beta-\frac{1}{2}}\theta_n)_{H}
\underset{n\rightarrow+\infty}\rightarrow 0.
\end{equation}

%\noindent Therefore,

%\begin{equation}\label{s72}
%\ds\textrm{Re}(\lambda_n)\|v_n\|^{2}+\textrm{Re}
%(A^{\frac{1}{2}}u_n,A^{\frac{1}{2}}v_n)_{H}
%\underset{n\rightarrow+\infty}\rightarrow0\textrm{ in }H.
%\end{equation}

 Summing up (\ref{s69"}), (\ref{s71"}) taking in mind (\ref{3.19}),  we get,

\begin{equation}\label{s83"}
\ds\|A^{\frac{1}{2}}u_n\|_H^{2}- \|v_n\|_H^{2}
\underset{n\rightarrow+\infty}\rightarrow 0.
\end{equation}
\begin{lemma}
\begin{equation}\label{s83"""}
\|v_n\|_H^{2}
\underset{n\rightarrow+\infty}\rightarrow0\textrm{ in }H,
\end{equation}
\end{lemma}
\begin{proof}

Recall here that $(\beta,\alpha)\in S$.

Taking the inner product of (\ref{s63"}) with $A^{-\beta}\theta_n$ to get
\begin{equation}\label{s83'"}
\mathbf{i}\lambda_n(v_n,A^{-\beta}\theta_n)_H+(A^{1/2}u_n,A^{-\beta+\frac{1}{2}}\theta_n)_H-(A^{\beta-\frac{1}{2}}\theta_n,A^{-\beta+\frac{1}{2}}\theta_n)_{H}
\underset{n\rightarrow+\infty}\rightarrow0\textrm{ in }H.
\end{equation}
Using that $A^{\frac{\alpha}{2}}\theta_n=o(1)$, $\beta-\frac{1}{2}\leq\frac{\alpha}{2}$ and $A^{\frac{1}{2}}u_n$ is bounded, we deduce that the second and the third terms in (\ref{s83'"}) converge to zero, then
\begin{equation}\label{s83''"}
\mathbf{i}\lambda_n(v_n,A^{-\beta}\theta_n)_H
\underset{n\rightarrow+\infty}\rightarrow0\textrm{ in }H.
\end{equation}
Taking the inner product of (\ref{s64"}) with $A^{-\beta}v_n$ to get
\begin{equation}\label{s83'''"}
\mathbf{i}\lambda_n(\theta_n,A^{-\beta}v_n)_H+\kappa(z_n(1),A^{-\beta+\frac{\alpha}{2}}v_n)_H+a(A^{\frac{\alpha}{2}}\theta_n,A^{-\beta+\frac{\alpha}{2}}v_n)_H+\|v_n\|_H^{2}
\underset{n\rightarrow+\infty}\rightarrow0\textrm{ in }H.
\end{equation}
Since $A^{\frac{\alpha}{2}}\theta_n=o(1)$, $z_n(1)=o(1)$, $A^{-\beta+\frac{\alpha}{2}}v_n=O(1)$ (because $-\beta+\frac{\alpha}{2}\leq 0$ and $v_n$ is bounded), then taking into account (\ref{s83''"}), we deduce that  
\begin{equation}\label{s83''""}
\|v_n\|_H^{2}
\underset{n\rightarrow+\infty}\rightarrow0\textrm{ in }H.
\end{equation}
\end{proof}
By (\ref{s83"}) we have also
$$\|A^{1/2}u_n\|_H^{2}
\underset{n\rightarrow+\infty}\rightarrow0\textrm{ in }H,$$
By integration of the identity (\ref{s65"}) and using (\ref{s66"}), we get
\begin{equation*}
\ds\|z_n\|_{L^{2}\big((0,1),H\big)}^{2}\rightarrow0.
\end{equation*}

\noindent All together, we have shown that $\{U_{n}\}$ converges to $0$
in $\mathcal{H}$. This clearly contradicts (\ref{s61"}).
Thus, condition (\ref{s25''}) holds.

Now, to show (\ref{s24''}), we again use a contradiction argument. Suppose that (\ref{s24''}) is not true. Since $0\in\rho(\mathcal{A}_{\alpha,\beta})$, it can be proved that, there exists $\lambda \in \mathbb{R}$ with $\lambda \neq 0$, a sequence $(\lambda_n)$ of real numbers, with $\lambda_n\rightarrow \lambda$  
and a sequence
of unit vectors $U_n=(u_n,v_n,\theta_n,z_n)^\top\in D(\mathcal{A}_{\alpha,\beta})$
such that
\begin{equation}\label{s60""}
\underset{n\rightarrow+\infty}\lim\|\big(\mathbf{i}\lambda_n I-\mathcal{A}_{\alpha,\beta}\big)U_n\|_H=0.
\end{equation}
A repetition of the above argument leads to the same contradiction since above we did not use the condition $\lambda_n\rightarrow \infty$. So the proof of Theorem \ref{theorem41} is completed. 
\end{proof}

 By specifying the regions $S_1$ and  $S_2$ as follow,
$$S_1:=\{ (\beta,\alpha)\in[0,1]\times[\frac{1}{2},1]\mid  0<\alpha-2\beta\}$$ and
$$ S_2:=\{ (\beta,\alpha)\in[0,1]\times[0,\frac{1}{2}]\mid   \alpha +2\beta<1\},$$
we have the following result.
\begin{theorem}\rm\label{theorem41'}
Assume that $a>\kappa$. Then for $\xi\in \mathring{\mathcal{J}}_{a,\kappa,\tau}$,  
 the semigroup $e^{t\mathcal{A}_{\alpha,\beta}}$ has the following stability properties:

(i) In $S_1 $, it is polynomially stable of order $\frac{1}{2(\alpha-2\beta)}$;

(ii) In $S_2$, it is polynomially stable of order $\frac{1}{2-2(\alpha+2\beta)}$.
\end{theorem}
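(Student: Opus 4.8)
The plan is to invoke the Borichev--Tomilov criterion of Lemma \ref{lem3.4}. Condition (\ref{s24''}), i.e. $i\mathbb{R}\subseteq\rho(\mathcal{A}_{\alpha,\beta})$, holds throughout $Q$: for bounded imaginary frequencies the contradiction argument used for (\ref{s24''}) in the proof of Theorem \ref{theorem41} applies verbatim, since it uses neither $\lambda_n\to\infty$ nor any region-specific estimate. It therefore remains to verify the resolvent bound (\ref{s25'''}) with Borichev--Tomilov exponent $\gamma=2(\alpha-2\beta)$ in $S_1$ and $\gamma=2-2(\alpha+2\beta)$ in $S_2$; I write $\gamma$ for the exponent denoted $\alpha$ in Lemma \ref{lem3.4}, to avoid the clash with the thermoelastic parameter. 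Arguing by contradiction, if (\ref{s25'''}) fails there are $\lambda_n\in\mathbb{R}$ with $|\lambda_n|\to\infty$ (WLOG $\lambda_n>0$) and unit vectors $U_n=(u_n,v_n,\theta_n,z_n)^\top\in D(\mathcal{A}_{\alpha,\beta})$ with
\begin{equation*}
\|(\mathbf{i}\lambda_n I-\mathcal{A}_{\alpha,\beta})U_n\|_{\mathcal{H}}=o(|\lambda_n|^{-\gamma}),
\end{equation*}
whose components read exactly as (\ref{s62"})--(\ref{s65"}) but with right-hand sides of size $o(|\lambda_n|^{-\gamma})$ instead of $o(1)$.

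First I would extract the dissipation. Since $a>\kappa$ and $\xi\in\mathring{\mathcal{J}}_{a,\kappa,\tau}$, the coercivity computation (\ref{s66-"}) yields, with a strictly positive constant $c$,
\begin{equation*}
c\big(\|z_n(1)\|_H^2+\|A^{\alpha/2}\theta_n\|_H^2\big)\le \mathrm{Re}\big((\mathbf{i}\lambda_n I-\mathcal{A}_{\alpha,\beta})U_n,U_n\big)_{\mathcal{H}}\le \|(\mathbf{i}\lambda_n I-\mathcal{A}_{\alpha,\beta})U_n\|_{\mathcal{H}}=o(|\lambda_n|^{-\gamma}),
\end{equation*}
so $\|A^{\alpha/2}\theta_n\|_H=o(|\lambda_n|^{-\gamma/2})$ and $\|z_n(1)\|_H=o(|\lambda_n|^{-\gamma/2})$; in particular $\theta_n\to0$ in $H$, and integrating the transport equation (\ref{s65"}) as in (\ref{s35}) gives $z_n\to0$ in $L^2((0,1),H)$. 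The whole difficulty is thereby reduced to proving $\|v_n\|_H\to0$, after which $\|A^{1/2}u_n\|_H\to0$ follows from the identity (\ref{s83"}), which is derived exactly as in Theorem \ref{theorem41} (a fortiori here, the data being smaller).

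Next I would reduce the mechanical block: from (\ref{s62"}), $v_n=\mathbf{i}\lambda_n u_n+o(|\lambda_n|^{-\gamma})$ in the $A^{1/2}$-norm, and substitution into (\ref{s63"}) gives $(A-\lambda_n^2)u_n=A^\beta\theta_n+r_n$ with $r_n$ controlled by the data, while the thermal equation (\ref{s64"}) gives $\theta_n=-(\mathbf{i}\lambda_n+aA^\alpha)^{-1}\big(A^\beta v_n+\kappa A^{\alpha/2}z_n(1)-p_n\big)$. Testing (\ref{s64"}) against the multiplier $A^{-\beta}v_n$ (as was done in the exponential case) produces $\|v_n\|_H^2$ together with the coupling terms $(\mathbf{i}\lambda_n\theta_n,A^{-\beta}v_n)_H$, $\kappa(A^{\alpha/2}z_n(1),A^{-\beta}v_n)_H$ and $a(A^\alpha\theta_n,A^{-\beta}v_n)_H$, each of which must be shown to be $o(1)$. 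The decisive step is a frequency splitting in the spectral decomposition of $A$: on the non-resonant range, where $|\mu_k-\lambda_n^2|$ is comparable to $\mu_k$ or to $\lambda_n^2$, the coordinate $u_{n,k}$ is controlled directly from $(A-\lambda_n^2)u_n=A^\beta\theta_n+r_n$; on the resonant range $\mu_k\sim\lambda_n^2$, one inserts $\theta_{n,k}\sim-\mu_k^\beta v_{n,k}/(\mathbf{i}\lambda_n+a\mu_k^\alpha)$ into the bound $\|A^{\alpha/2}\theta_n\|_H=o(|\lambda_n|^{-\gamma/2})$, which on $\mu_k\sim\lambda_n^2$ reads $\lambda_n^{\,2\beta-\alpha}\|v_n\|_H=o(|\lambda_n|^{-\gamma/2})$ in $S_1$ and $\lambda_n^{\,\alpha+2\beta-1}\|v_n\|_H=o(|\lambda_n|^{-\gamma/2})$ in $S_2$. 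With the stated $\gamma$ these both become $\|v_n\|_H=o(1)$, and the region constraints ($\alpha>2\beta$, $\alpha\ge\tfrac12$ in $S_1$; $\alpha+2\beta<1$, $\alpha\le\tfrac12$ in $S_2$) ensure the denominators behave as claimed and that the remaining cross terms, carrying strictly smaller powers of $\lambda_n$, vanish.

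The main obstacle is precisely this sharp power bookkeeping in the \emph{resonant} regime: the exponents of $A$ in the multipliers must be chosen so that the coupling is exactly absorbed by the thermal dissipation, and the two distinct rates arise from whether $a\mu_k^\alpha$ (when $\alpha\ge\tfrac12$, region $S_1$) or $\lambda_n$ (when $\alpha\le\tfrac12$, region $S_2$) dominates the denominator $\mathbf{i}\lambda_n+a\mu_k^\alpha$ at the resonance $\mu_k\sim\lambda_n^2$. Once $\|v_n\|_H\to0$, $\|A^{1/2}u_n\|_H\to0$, $\theta_n\to0$ and $z_n\to0$ are all established, we get $\|U_n\|_{\mathcal{H}}\to0$, contradicting $\|U_n\|_{\mathcal{H}}=1$; Lemma \ref{lem3.4} then yields the polynomial decay of order $\tfrac{1}{2(\alpha-2\beta)}$ in $S_1$ and of order $\tfrac{1}{2-2(\alpha+2\beta)}$ in $S_2$.
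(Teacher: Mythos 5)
Your proposal shares the paper's skeleton: both invoke Lemma \ref{lem3.4}, argue by contradiction on (\ref{s25'''}), and use the dissipativity computation (\ref{s66-"'}) to get $|\lambda_n|^{\gamma/2}\|z_n(1)\|_H\to 0$ and $|\lambda_n|^{\gamma/2}\|A^{\alpha/2}\theta_n\|_H\to 0$, so that everything reduces to $\|v_n\|_H\to 0$. The difference is how that key step is settled. The paper does not prove it internally: it applies $|\lambda_n|^{-\gamma/2}A^{-\alpha/2}$ to (\ref{s64"'}), absorbs the delayed terms $\kappa z_n(1)$ and $aA^{\alpha/2}\theta_n$ by the two dissipation bounds, observes that the resulting relation is exactly equations (2.7)/(2.30) of \cite{HaLi13} for the undelayed system, and then cites \cite{HaLi13} for the remainder. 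You replace that citation by a self-contained resonant/non-resonant splitting, and your exponent arithmetic is correct: at resonance $\mu\sim\lambda_n^2$ the denominator $|\mathbf{i}\lambda_n+a\mu^{\alpha}|$ is of size $\lambda_n^{2\alpha}$ when $\alpha\geq\frac{1}{2}$ ($S_1$) and of size $\lambda_n$ when $\alpha\leq\frac{1}{2}$ ($S_2$), and inserting $\theta\approx-\mu^{\beta}v/(\mathbf{i}\lambda_n+a\mu^{\alpha})$ into $\|A^{\alpha/2}\theta_n\|_H=o(|\lambda_n|^{-\gamma/2})$ yields $o(1)$ for the resonant part of $v_n$ precisely when $\gamma=2(\alpha-2\beta)$, resp. $\gamma=2-2(\alpha+2\beta)$, while the non-resonant part is controlled from $(A-\lambda_n^2)u_n=A^{\beta}\theta_n+r_n$; the delayed contributions survive the same bookkeeping since $\mu^{\alpha}/|\mathbf{i}\lambda_n+a\mu^{\alpha}|=O(1)$, resp. $\mu^{\alpha/2}/|\mathbf{i}\lambda_n+a\mu^{\alpha}|=O(\lambda_n^{\alpha-1})$, at resonance. (Your computation in fact confirms the rates stated in the theorem and shows that the line ``$\gamma=2-(\beta+2\alpha)$'' in the paper's proof is a typo.) One caveat: you write the argument with eigenvalues $\mu_k$ and coordinates $u_{n,k}$, but the abstract hypotheses on $A$ (self-adjoint, strictly positive) do not grant a discrete spectrum; the splitting must be recast with the spectral projections $\chi_{[\lambda_n^2/2,\,2\lambda_n^2]}(A)$ of the functional calculus — routine, but necessary for the generality claimed.

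There is, however, one genuine error: your justification of (\ref{s24''}). You assert that the argument used for (\ref{s24''}) in Theorem \ref{theorem41} applies verbatim because it uses ``neither $\lambda_n\to\infty$ nor any region-specific estimate.'' The second claim is false: that argument is a rerun of the region-$S$ estimates, whose inner lemma tests (\ref{s64"}) against $A^{-\beta}v_n$ and needs $A^{\frac{\alpha}{2}-\beta}v_n=O(1)$, i.e. $\alpha\leq 2\beta$, and tests (\ref{s63"}) against $A^{-\beta}\theta_n$ and needs $\|A^{\frac{1}{2}-\beta}\theta_n\|_H\lesssim\|A^{\frac{\alpha}{2}}\theta_n\|_H$, i.e. $\alpha\geq 1-2\beta$. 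Both conditions fail at, e.g., $(\beta,\alpha)=(0.1,0.6)\in S_1$, so the argument you point to is simply not available in $S_1\cup S_2$. The repair is the one the paper itself makes: rerun the polynomial argument at hand (your splitting) for a bounded sequence $\lambda_n\to\lambda\neq 0$, noting that nothing in it requires $|\lambda_n|\to\infty$; for bounded frequencies matters in fact simplify, e.g. one can conclude by interpolating $\|u_n\|_H$ between $\|A^{\beta-\frac{\alpha}{2}}u_n\|_H\to 0$ and the bounded $\|A^{\frac{1}{2}}u_n\|_H$. With that substitution, and the spectral-measure reformulation above, your proof goes through.
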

\begin{proof}
We prove the two cases (i) and (ii) simultaneously. So in the sequel, $\gamma=2(\alpha-2\beta)$ in the case (i) and $\gamma=2-(\beta+2\alpha)$ in the case (ii). By Lemma \ref{lem3.4} we need to check conditions (\ref{s24''}) and (\ref{s25'''}).

Suppose that (\ref{s25'''}) is not true, then there exists a sequence $(\lambda_n)$ of real numbers, with $|\lambda_n|\rightarrow \infty$  and a sequence
of unit vectors $U_n=(u_n,v_n,\theta_n,z_n)^\top\in D(\mathcal{A}_{\alpha,\beta})$
with
\begin{equation}\label{s60"'}
\underset{n\rightarrow+\infty}\lim|\lambda_n|^{\gamma}\|\big(\mathbf{i}\lambda_n I-\mathcal{A}_{\beta,\alpha}\big)U_n\|_H=0,
\end{equation}

\noindent that is

\begin{equation}\label{s61"'}
\|A^{\frac{1}{2}}u_n\|_H^{2}+\|v_n\|_H^{2}+\|\theta_n\|_H^{2}+\xi\int_{0}^{1}\|z_n(\rho)\|_Hd\rho=1,
\quad\forall n\in\N
\end{equation}

and

\begin{equation}\label{s62"'}
\ds|\lambda_n|^{\gamma}\left( \mathbf{i}\lambda_n A^{\frac{1}{2}}u_n-A^{\frac{1}{2}}v_n\right) \rightarrow0 \textrm{ in } H,
\end{equation}

\begin{equation}\label{s63"'}
\ds|\lambda_n|^{\gamma}\left( \mathbf{i}\lambda_n v_n+A^{1/2}\big(A^{1/2}u_n-A^{\beta-\frac{1}{2}}\theta_n\big)\right) \rightarrow0 \textrm{ in } H,
\end{equation}

\begin{equation}\label{s64"'}
\ds|\lambda_n|^{\gamma}\left( \mathbf{i}\lambda_n\theta_n+\kappa A^{\frac{\alpha}{2}}
\big(z_n(1)+\frac{a}{\kappa}A^{\frac{\alpha}{2}}\theta_n
+\frac{1}{\kappa} A^{\beta-\frac{\alpha}{2}}v_n\big)\right) \rightarrow0 \textrm{ in } H,
\end{equation}

\begin{equation}\label{s65"'}
\ds|\lambda_n|^{\gamma}\left( \mathbf{i}\lambda_nz_n+\frac{1}{\tau}\partial_{\rho}z_n\right) \equiv
h_n\rightarrow0 \textrm{ in }L^2\big((0,1)H\big).
\end{equation}
Since $\xi\in \mathring{\mathcal{J}}_{a,\kappa,\tau}$,
one can choose, as in the proof of Lemma \ref{lemma22"},
$\ds \varepsilon=:\varepsilon_0 \in \left]\frac{\kappa\tau}{\xi},
\frac{2}{\kappa}\left(a-\frac{\xi}{2\tau}\right)\right[$. 

\medskip

Therefore, we get

$\textrm{Re}\bigg(|\lambda_n|^{\gamma}(\mathbf{i}\lambda_n I-\mathcal{A}_{\alpha,\beta})U_n,U_n\bigg)_{\mathcal{H}}
=-\textrm{Re}
|\lambda_n|^{\gamma}(\mathcal{A}_{\alpha,\beta}U_n,U_n)_{\mathcal{H}}$
\begin{eqnarray}
% \nonumber to remove numbering (before each equation)
\geq \big(\frac{\xi}{2\tau}-
\frac{\kappa}{2\varepsilon_0}\big)|\lambda_n|^{\gamma}\|z_n(1)\|_H^{2}
+\left(a-\frac{\xi}{2\tau}-\frac{\varepsilon_0\kappa}{2}\right)
|\lambda_n|^{\gamma}\|A^{\frac{\alpha}{2}}\theta_n\|_H^{2}\geq 0.\label{s66-"'}
\end{eqnarray}
 It follows (by (\ref{s60"'})) that

\begin{equation}\label{s66"'}
|\lambda_n|^{\frac{\gamma}{2}}\| z_n(1)\|\underset{n\rightarrow+\infty}\rightarrow0\quad \text{ and }\quad
|\lambda_n|^{\frac{\gamma}{2}}\|A^{\frac{\alpha}{2}}\theta_n\|\underset{n\rightarrow+\infty}\rightarrow0\textrm{ in }H
\end{equation}
and in particular, %by taking in mind that $\frac{\alpha}{2}\geq\beta\geq 0$ in $S_1$,
\begin{equation}\label{s67"'}
|\lambda_n|^{
\frac{\gamma}{2}}\|\theta_n\|\underset{n\rightarrow+\infty}\rightarrow0\textrm{ in }H.
\end{equation}

First, we show that $\|v_n\|\underset{n\rightarrow+\infty}\rightarrow0$. As in \cite{HaLi13}, acting the bounded operator $|\lambda_n|^{-\frac{\gamma}{2}}A^{-\frac{\alpha}{2}}$ on (\ref{s64"'}) and applying (\ref{s66"'}), to obtain
\begin{equation}\label{s64bis"'}
\ds \mathbf{i}\lambda_n|\lambda_n|^{\frac{\gamma}{2}}A^{-\frac{\alpha}{2}}\theta_n
+|\lambda_n|^{\frac{\gamma}{2}} A^{\beta-\frac{\alpha}{2}}v_n \rightarrow0 \textrm{ in } H,
\end{equation}
which is exactly equation (2.7) and equation (2.30) in \cite{HaLi13}. Thus, the rest of the estimate of $\|v_n\|$ will be the same as in \cite{HaLi13}. So we omit the details. 
%\noindent Since $\ds\beta\leq\frac{\alpha}{2}$, then we get by (\ref{s66})
%\begin{equation}\label{s68}
%A^{\beta}\theta_n\underset{n\rightarrow+\infty}\rightarrow0\textrm{ in }H.
%\end{equation}

%\noindent Thus,

%\begin{equation}\label{s70}
%\textrm{Re}(\lambda_n)\|A^{\frac{1}{2}}u_n\|^{2}-
%\textrm{Re}(A^{\frac{1}{2}}v_n,A^{\frac{1}{2}}u_n)_{H}
%\underset{n\rightarrow+\infty}\rightarrow0.
%\end{equation}

Second, we estimate $\|A^{\frac{1}{2}}u_n\|_H$. As in the proof of Theorem \ref{theorem41}, we take the inner product of (\ref{s62"'}) with $\frac{1}{\lambda_n|\lambda_n|^{\gamma}}A^{1/2}u_n$ in $H$ and take the inner product of  $\frac{1}{\lambda_n|\lambda_n|^{\gamma}}v_n$ with (\ref{s63"'}) in $H$, respectively, then summing yields

\begin{equation}\label{s83"'}
\ds\mathbf{i}\|A^{\frac{1}{2}}u_n\|_H^{2}-\mathbf{i} \|v_n\|_H^{2}-\frac{1}{\lambda_n}(A^{\frac{1}{2}}v_n,A^{\beta-\frac{1}{2}}\theta_n)_{H}
\underset{n\rightarrow+\infty}\rightarrow0.
\end{equation}

Using the boundedness of $\frac{1}{\lambda_n}A^{1/2}v_n$ (given by (\ref{s61"'}) and (\ref{s62"'})) together with (\ref{s66"'}) and that $\beta-\frac{1}{2}\leq \frac{\alpha}{2}$, we deduce
that the last term in the left hand side of (\ref{s83"'}) goes to zero as $n$ goes to infinity. Hence (\ref{s83"'}) is reduced to
\begin{equation}\label{s83bis"}
\ds\|A^{\frac{1}{2}}u_n\|_H^{2}
\underset{n\rightarrow+\infty}\rightarrow 0
\end{equation}
where we have used the above estimate of $\|v_n\|_H$.

By integration of the identity (\ref{s65"'}) and using (\ref{s66"'}), we get
\begin{equation*}
\ds\|z_n\|_{L^{2}\big((0,1),H\big)}^{2}\rightarrow0.
\end{equation*}

\noindent All together, we have shown that $\{U_{n}\}$ converges to $0$
in $\mathcal{H}$. This clearly contradicts (\ref{s61"'}).
Thus, condition (\ref{s25'''}) holds.

\medskip

To show (\ref{s24''}), we again use a contradiction argument. Suppose that (\ref{s24''}) is not true. Since $0\in\rho(\mathcal{A}_{\alpha,\beta})$, it can be proved that, there exists $\lambda \in \mathbb{R}$ with $\lambda \neq 0$, a sequence $(\lambda_n)$ of real numbers, with $\lambda_n\rightarrow \lambda$  
and a sequence
of unit vectors $U_n=(u_n,v_n,\theta_n,z_n)^\top\in D(\mathcal{A}_{\alpha,\beta})$
such that $\underset{n\rightarrow+\infty}\lim\|\big(\mathbf{i}\lambda_n I-\mathcal{A}_{\alpha,\beta}\big)U_n\|_H=0$, which implies in particular,
\begin{equation}\label{s60""'}
\underset{n\rightarrow+\infty}\lim|\lambda_n|^\gamma\|\big(\mathbf{i}\lambda_n I-\mathcal{A}_{\alpha,\beta}\big)U_n\|_H=0.
\end{equation}
A repetition of the above argument leads to the same contradiction since above we did not use the condition $|\lambda_n|\rightarrow \infty$. So the proof of Theorem \ref{theorem41'} is completed. 
\end{proof}
\begin{remark}
The damping term $aA^\alpha \theta(t)$ added to the heat equation has the same order of $\kappa A^\alpha \theta(t-\tau)$. So we think  that we cannot expect to have better stability results of system (\ref{s1}) than those obtained in the case $\tau=0$. In particular we conjecture that system (\ref{s1}) is only polynomially stable in the region $S_1\cup S_2$.
\end{remark}
\subsection{Some related systems} Suppose that $A^{\alpha}=BB^*$
where $B:D(B):H\rightarrow H$ is a closed densely defined linear unbounded operator.The %  results  on well-posedness and stability in Proposition \ref{p4.1"} and Theorem \ref{theorem41}  are preserved for the following systems (\ref{s1''}) in which the assumption
 assumption 
$$B^*\theta(t-\tau)=g_0(t),  \quad t\in(0,\tau)$$
is given  instead of the assumption
$$A^{\alpha/2}\theta(t-\tau)=g_0(t),  \quad t\in(0,\tau).$$
 
Precisely, we consider the following system
 \begin{equation}\label{s1''}
\left\{
\begin{array}{ll}
u''(t)+ Au(t)-C\theta(t)=0, & \quad t\in(0,+\infty), \\
\theta'(t)+\kappa A^\alpha \theta(t-\tau)+aA^\alpha \theta(t)+C^{*}u'(t)=0, & \quad t\in(0,+\infty), \\
u(0)=u_{0}, u'(0)=u_{1}, \theta(0)=\theta_{0}, &  \\
\ds B^{*}\theta(t-\tau)=\psi(t-\tau), & \quad t\in(0,\tau).
\end{array}
\right.
\end{equation}
where $C:D(C) \subset H\rightarrow H$ is a closed densely defined linear operator such that $CC^*=A^{2\beta}$.
 
We suppose that (\ref{s1''}) satisfies property $(\mathcal{H}_1)$ where
 
 $(\mathcal{H}_1)$:\;\;(i)\;\;There  exists $c_1 > 0$ such that
 \begin{equation}\label{coer2}
     \|B^{*}v\|_H\geq c_1\|v\|_{H},\;\;\forall\
v\in D(B^{*}).
 \end{equation}
 
 (ii)\;\;  $D(A^{1/2})\subset D(C^*)$ and $D(B^{*})\subset D(A^{-1/2}C)$ and there exists $c_2 > 0$ such that $$\|A^{-1/2}Cv\|_H\leq c_2\|B^*v\|_{H},\;\;\forall\
v\in D(B^{*}).$$  %We are interested in the study of the asymptotic behavior of solutions $u,\theta:[0,\infty)\rightarrow H$ as $t\rightarrow\infty$, %for $(\alpha,\beta)\in[0,1]\times [0,1/2]$  such that $\beta \leq \alpha$.
\subsubsection{Well-posedness}
Here we take
$$\ds z(\rho,t):=B^{*}\theta(t-\tau\rho),\quad\rho\in(0,1), t>0.$$
Then, problem (\ref{s1''}) is rewritten as

\begin{equation}\label{s0'''}
\left\{
\begin{array}{ll}
u''(t)+Au(t)-C\theta(t)=0, &  t>0, \\
\theta'(t)+\kappa Bz(1,t)+aBB^*\theta(t)+C^{*}u'(t)=0, & t>0, \\
\tau z_{t}(t,\rho)+z_{\rho}(\rho,t)=0, & (\rho,t)\in(0,1)\times(0,+\infty), \\
u(0)=u_{0}, u'(0)=u_{1}, \theta(0)=\theta_{0}, & \\
z(\rho,0)=\psi(-\tau\rho),& \rho\in(0,1),\\
z(0,t)=B^*\theta(t),&\quad t>0.
\end{array}
\right.
\end{equation}

Define
$U=(u,u',\theta,z)^{\top}$,
then problem (\ref{s0'''}) can be formulated as a first order system of the form

\begin{equation}\label{s7'}
\left\{
\begin{array}{ll}
U'=\mathcal{A}_{\alpha,\beta}U, & \\
\ds U(0)=\big(u_{0},u_{1},\theta_{0},\psi(-\tau \cdot)\big)^{\top},
\end{array}
\right.
\end{equation}
where the operator $\mathcal{A}_{\alpha,\beta}$ is defined by
$$\mathcal{A}_{\alpha,\beta}
\left(
\begin{array}{c}
u \\
v \\
\theta \\
z \\
\end{array}
\right)=
\left(
\begin{array}{c}
v \\
\ds-A^{1/2}\big(A^{1/2}u-A^{-1/2}C\theta\big) \\
-B\big(\kappa z(1)+aB^*\theta\big)-C^{*}v \\
\ds-\frac{1}{\tau}z_{\rho} \\
  \end{array}
\right),$$
with domain
$$D(\mathcal{A}_{\alpha,\beta})=\left\{
\begin{array}{c}
(u,v,\theta,z)^{\top}\in D(A^{1/2})\times D(A^{1/2})
\times D(B^{*})\times H^{1}\big((0,1),H\big): \\
\ds z(0)=B^*\theta,\quad A^{1/2}u-A^{-1/2}C\theta\in D(A^{1/2}),\quad\textrm{ and }\quad aB^*\theta+\kappa z(1)\in D(B)
\end{array}
\right\},$$
in the Hilbert space
$$\mathcal{H}=D(A^{1/2})\times H\times H\times L^{2}\big((0,1),H\big),$$
equipped with the scalar product
$$\ds \big((u,v,\theta,z)^\top,(u_1,v_1,\theta_1,z_1)^\top\big)_{\mathcal{H}}
=\big(A^{1/2}u,A^{1/2}u_1\big)_{H}
+(v,v_1)_{H}+(\theta,\theta_1)_{H}+\xi\int_{0}^{1}(z,z_1)_{H}d\rho,$$
where $\xi>0$ is a positive constant.

\begin{lemma}\rm\label{lemma22}
Assume that $a\geq \kappa$. 
Then, for
$\ds\xi\in\mathcal{J}_{a,\kappa,\tau}$, the operator
$\mathcal{A}_{\alpha,\beta}$ is dissipative in $\mathcal{H}$.
\end{lemma}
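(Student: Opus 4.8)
The plan is to follow the computation of Lemma \ref{lemma22"} essentially verbatim, the only new ingredient being the bookkeeping of the coupling operators $C,C^*$ in place of the explicit fractional powers of $A$. First I would fix $U=(u,v,\theta,z)^{\top}\in D(\mathcal{A}_{\alpha,\beta})$ and expand $\big(\mathcal{A}_{\alpha,\beta}U,U\big)_{\mathcal{H}}$ directly from the definition of the scalar product, pairing each line of $\mathcal{A}_{\alpha,\beta}U$ with the matching slot. The first two lines produce $\big(A^{1/2}v,A^{1/2}u\big)_H-\big(A^{1/2}u,A^{1/2}v\big)_H$ together with the coupling contribution $\big(A^{-1/2}C\theta,A^{1/2}v\big)_H$; the third line produces $-\kappa\big(z(1),B^*\theta\big)_H-a\|B^*\theta\|_H^{2}-\big(C^*v,\theta\big)_H$, where I use $aB^*\theta+\kappa z(1)\in D(B)$ and $\theta\in D(B^*)$ to move $B$ onto $B^*\theta$.

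The crucial observation is that the two skew-adjoint pairs drop out of the real part. The $u$-$v$ pair is obviously purely imaginary. For the coupling terms, the hypotheses $(\mathcal{H}_1)(ii)$ guarantee $\theta\in D(B^*)\subset D(A^{-1/2}C)=D(C)$ and $v\in D(A^{1/2})\subset D(C^*)$, so both $C\theta$ and $C^*v$ are well defined; boundedness and self-adjointness of $A^{-1/2}$ give $\big(A^{-1/2}C\theta,A^{1/2}v\big)_H=\big(C\theta,v\big)_H$, while the adjoint identity yields $\big(C^*v,\theta\big)_H=\big(v,C\theta\big)_H=\overline{\big(C\theta,v\big)_H}$. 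Hence $\big(A^{-1/2}C\theta,A^{1/2}v\big)_H-\big(C^*v,\theta\big)_H$ is again purely imaginary and contributes nothing to $\textrm{Re}\big(\mathcal{A}_{\alpha,\beta}U,U\big)_{\mathcal{H}}$. Using $A^{\alpha}=BB^*$ I would also record $\|A^{\alpha/2}\theta\|_H^{2}=\|B^*\theta\|_H^{2}$, so the dissipative part coincides exactly with that of (\ref{s1}).

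Next I would treat the transport term by integration by parts, $\textrm{Re}\int_{0}^{1}(z_{\rho},z)_H\,d\rho=\tfrac12\big(\|z(1)\|_H^{2}-\|z(0)\|_H^{2}\big)$, and substitute the boundary condition $z(0)=B^*\theta$. Collecting everything leaves
$$\textrm{Re}\big(\mathcal{A}_{\alpha,\beta}U,U\big)_{\mathcal{H}}=-a\|B^*\theta\|_H^{2}-\kappa\,\textrm{Re}\big(z(1),B^*\theta\big)_H-\frac{\xi}{2\tau}\|z(1)\|_H^{2}+\frac{\xi}{2\tau}\|B^*\theta\|_H^{2}.$$
Applying Young's inequality to the cross term with a free parameter $\varepsilon>0$ reproduces exactly the bound established in Lemma \ref{lemma22"},
$$\textrm{Re}\big(\mathcal{A}_{\alpha,\beta}U,U\big)_{\mathcal{H}}\leq\Big(\frac{\kappa}{2\varepsilon}-\frac{\xi}{2\tau}\Big)\|z(1)\|_H^{2}+\Big(\frac{\varepsilon\kappa}{2}-a+\frac{\xi}{2\tau}\Big)\|B^*\theta\|_H^{2}.$$

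Finally, both coefficients are nonpositive as soon as $\frac{\kappa\tau}{\xi}\leq\varepsilon\leq\frac{2}{\kappa}\big(a-\frac{\xi}{2\tau}\big)$, and this interval is nonempty precisely when $\xi^{2}-2a\tau\xi+\kappa^{2}\tau^{2}\leq0$, i.e. $\xi\in\mathcal{J}_{a,\kappa,\tau}$, which requires $a\geq\kappa$ for the discriminant to be nonnegative. Choosing $\varepsilon=\varepsilon_0$ in this interval yields $\textrm{Re}\big(\mathcal{A}_{\alpha,\beta}U,U\big)_{\mathcal{H}}\leq0$ and proves dissipativity. I expect the only genuine difficulty to lie in the second paragraph: one must verify that the abstract assumptions $(\mathcal{H}_1)$ render every manipulation of $C$ and $C^*$ legitimate (domains, the adjoint identity, and the commutation $\big(A^{-1/2}C\theta,A^{1/2}v\big)_H=\big(C\theta,v\big)_H$), since it is precisely this bookkeeping that replaces the concrete fractional powers used in Lemma \ref{lemma22"}. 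Once the coupling terms are seen to cancel, the remainder of the argument is identical to the already-established case.
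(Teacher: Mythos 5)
Your proposal is correct and follows essentially the same route as the paper's proof: expand $\big(\mathcal{A}_{\alpha,\beta}U,U\big)_{\mathcal{H}}$, cancel the skew-symmetric $u$--$v$ and coupling terms (the paper does this via $\big(A^{-1/2}C\big)^*=C^*A^{-1/2}$, you via $\big(A^{-1/2}C\theta,A^{1/2}v\big)_H=\big(C\theta,v\big)_H$ and the adjoint identity, which is the same bookkeeping), integrate the transport term by parts with $z(0)=B^*\theta$, apply Young's inequality, and pick $\varepsilon_0\in\big[\tfrac{\kappa\tau}{\xi},\tfrac{2}{\kappa}\big(a-\tfrac{\xi}{2\tau}\big)\big]$, whose nonemptiness is exactly the condition $\xi\in\mathcal{J}_{a,\kappa,\tau}$. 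Your write-up is in fact slightly more careful than the paper's, since you verify explicitly the domain inclusions from $(\mathcal{H}_1)$ and the equivalence of the interval condition with $\xi\in\mathcal{J}_{a,\kappa,\tau}$, which the paper only cites from the earlier lemma.
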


\begin{proof}
Take $U=(u,v,\theta,z)^{\top}\in D(\mathcal{A}_{\alpha,\beta})$.
Then, we have
\begin{eqnarray*}
% \nonumber to remove numbering (before each equation)
\ds\big(\mathcal{A}_{\alpha,\beta}U,U\big)_{\mathcal{H}}&=&
\big(A^{\frac{1}{2}}v,A^{\frac{1}{2}}u\big)_{H}-
\big(A^{\frac{1}{2}}u,A^{\frac{1}{2}}v\big)_{H}
+ (A^{-1/2}C\theta,A^{1/2}v)_{H}-(\kappa z(1),B^*\theta)_{H}-(C^{*}v,\theta)_{H} \\
&-&a\|B^{*}\theta\|_{H}^{2}-
\frac{\xi}{\tau}\int_{0}^{1}(z_{\rho},z)_{H}d\rho.
\end{eqnarray*}
Since $C$ is densely defined, we have $\big(A^{-1/2}C\big)^*=C^*A^{-1/2}$, then $(A^{-1/2}C\theta,A^{1/2}v)_{H}=(\theta,C^{*}v)_{H}$.

\medskip

Then, with using the Young's inequality and that
$\ds z(0)=A^{\frac{\alpha}{2}}\theta$,
we find that, for every $\varepsilon>0$,

\begin{eqnarray*}
% \nonumber to remove numbering (before each equation)
Re\big(\mathcal{A}_{\alpha,\beta}U,U\big)_{\mathcal{H}}&\leq&
\bigg(\frac{\kappa}{2\varepsilon}-\frac{\xi}{2\tau}\bigg)\|z(1)\|_H^{2}+
\bigg(\frac{\varepsilon\kappa}{2}-a+\frac{\xi}{2\tau}\bigg)
\|B^{*}\theta\|_H^{2}.
\end{eqnarray*}

\medskip

As in the previous case, one has for  $\xi\in\mathcal{J}_{a,\kappa,\tau}$ and $\varepsilon=:\varepsilon_0 \varepsilon \in \left[\frac{\kappa\tau}{\xi},\frac{2}{\kappa}\big(a-\frac{\xi}{2\tau}\big)\right]$
\begin{equation}\label{s48}
Re\big(\mathcal{A}_{\alpha,\beta}U,U\big)_{\mathcal{H}}\leq 0,\quad
\forall \, U \in D(\mathcal{A}_{\alpha,\beta}).
\end{equation}

\end{proof}

We need the following result.

\begin{lemma}\rm\label{lemma23}
Assume that $a\geq \kappa$. Let
$\ds\xi\in\mathcal{J}_{a,\kappa,\tau}$. We have
\begin{equation}\label{s49}
\mathbb{C}_0
\subset\rho\big(\mathcal{A}_{\alpha,\beta}\big).
\end{equation}
\end{lemma}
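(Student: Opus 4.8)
The plan is to combine the dissipativity already established in Lemma \ref{lemma22} with a Lax--Milgram surjectivity argument, exactly in the spirit of Lemma \ref{lemma23'}. Since $\xi\in\mathcal{J}_{a,\kappa,\tau}$ makes $\mathcal{A}_{\alpha,\beta}$ dissipative, for every $\lambda\in\mathbb{C}_0$ one has the a priori estimate $\|(\lambda Id-\mathcal{A}_{\alpha,\beta})U\|_{\mathcal{H}}\geq\textrm{Re}(\lambda)\,\|U\|_{\mathcal{H}}$, so that $\lambda Id-\mathcal{A}_{\alpha,\beta}$ is injective with bounded inverse on its range; it remains only to prove that it is onto. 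Fixing $(f,g,p,h)^{\top}\in\mathcal{H}$, I would seek $U=(u,v,\theta,z)^{\top}\in D(\mathcal{A}_{\alpha,\beta})$ solving $(\lambda Id-\mathcal{A}_{\alpha,\beta})U=(f,g,p,h)^{\top}$.

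First I would eliminate the redundant unknowns: the first line gives $v=\lambda u-f\in D(A^{1/2})$, while solving the transport equation with the boundary value $z(0)=B^{*}\theta$ yields $z(\rho)=e^{-\lambda\tau\rho}B^{*}\theta+\tau e^{-\lambda\tau\rho}\int_{0}^{\rho}h(s)e^{\lambda\tau s}\,ds$, and in particular $z(1)=e^{-\lambda\tau}B^{*}\theta+z_{0}$ with $z_{0}\in H$ depending only on the data. Substituting $v$ and $z(1)$ into the $v$-equation and the $\theta$-equation, then testing the former against $\lambda x$ and the latter against $y$ with $(x,y)\in\mathcal{F}:=D(A^{1/2})\times D(B^{*})$ and summing, reduces the problem to a variational identity $b\big((u,\theta),(x,y)\big)=F(x,y)$ on $\mathcal{F}$, in which the only coefficient carrying the delay and the damping is $(\kappa e^{-\lambda\tau}+a)$, multiplying $(B^{*}\theta,B^{*}y)_{H}$.

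Continuity of $b$ and of the antilinear form $F$ on $\mathcal{F}$ follows from hypothesis $(\mathcal{H}_1)$: the coupling terms are controlled by $\|A^{-1/2}C\theta\|_{H}\leq c_{2}\|B^{*}\theta\|_{H}$ together with the boundedness of $C^{*}\colon D(A^{1/2})\to H$ (a consequence of $D(A^{1/2})\subset D(C^{*})$ and the closed graph theorem). For coercivity I would evaluate $b$ on the diagonal $(x,y)=(u,\theta)$: using $(A^{-1/2}C)^{*}=C^{*}A^{-1/2}$ (as in Lemma \ref{lemma22}), the two coupling contributions equal $-\overline{\lambda}\,(\theta,C^{*}u)_{H}$ and $\lambda\,(C^{*}u,\theta)_{H}$, which are conjugate up to sign and hence cancel in the real part, leaving
\begin{equation*}
\textrm{Re}\,b\big((u,\theta),(u,\theta)\big)=\textrm{Re}(\lambda)\Big(|\lambda|^{2}\|u\|_{H}^{2}+\|A^{1/2}u\|_{H}^{2}+\|\theta\|_{H}^{2}\Big)+\big(a+\kappa e^{-\textrm{Re}(\lambda)\tau}\cos(\textrm{Im}(\lambda)\tau)\big)\|B^{*}\theta\|_{H}^{2}.
\end{equation*}

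The crux is the sign of the last coefficient, and this is exactly where the hypothesis $a\geq\kappa$ together with the openness of $\mathbb{C}_0$ enter: for $\textrm{Re}(\lambda)>0$ one has $e^{-\textrm{Re}(\lambda)\tau}<1$, whence $a+\kappa e^{-\textrm{Re}(\lambda)\tau}\cos(\textrm{Im}(\lambda)\tau)\geq a-\kappa e^{-\textrm{Re}(\lambda)\tau}>a-\kappa\geq 0$, so all four terms are strictly positive and $b$ is coercive on $\mathcal{F}$. I would stress that on the imaginary axis with $a=\kappa$ this coefficient can vanish (take $\cos(\textrm{Im}(\lambda)\tau)=-1$), which is precisely why the statement is restricted to $\mathbb{C}_0$ rather than $\overline{\mathbb{C}}_0$. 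Lax--Milgram then yields a unique $(u,\theta)\in\mathcal{F}$; I would recover $v=\lambda u-f\in D(A^{1/2})$ and $z\in H^{1}\big((0,1),H\big)$ from the formulas above, and read off the domain conditions $A^{1/2}u-A^{-1/2}C\theta\in D(A^{1/2})$ and $aB^{*}\theta+\kappa z(1)\in D(B)$ by specialising the variational identity to $y=0$ and to $x=0$, respectively. This gives surjectivity of $\lambda Id-\mathcal{A}_{\alpha,\beta}$; combined with the dissipative a priori bound it shows $\lambda\in\rho(\mathcal{A}_{\alpha,\beta})$, and since $\lambda\in\mathbb{C}_0$ was arbitrary, (\ref{s49}) follows.
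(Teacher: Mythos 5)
Your proposal is correct and follows essentially the same route as the paper's proof: eliminate $v=\lambda u-f$ and solve the transport equation for $z$ in terms of $B^{*}\theta$, reduce to a variational identity on $\mathcal{F}=D(A^{1/2})\times D(B^{*})$, and apply Lax--Milgram, with coercivity coming from $\mathrm{Re}(\lambda)>0$ and the bound $a+\kappa e^{-\mathrm{Re}(\lambda)\tau}\cos(\mathrm{Im}(\lambda)\tau)>a-\kappa\geq 0$, then recovering the domain conditions by taking $x=0$ and $y=0$. Your added details (the explicit injectivity estimate $\|(\lambda Id-\mathcal{A}_{\alpha,\beta})U\|_{\mathcal{H}}\geq\mathrm{Re}(\lambda)\|U\|_{\mathcal{H}}$ from dissipativity, the cancellation of the coupling terms in the real part, and the observation that the key coefficient can vanish on $i\mathbb{R}$ when $a=\kappa$) merely make explicit what the paper leaves implicit.
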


\begin{proof}
Let $\lambda\in\mathbb{C}_0$ and
fix $(f,g,p,h)^{\top}\in\mathcal{H}$. Wee seek
a unique solution $U=(u,v,\theta,z)^{\top}\in D(\mathcal{A}_{\alpha,\beta})$ of
\begin{equation*}
\big(\lambda Id-\mathcal{A}_{\alpha,\beta}\big)
\left(
\begin{array}{c}
u \\
v \\
\theta \\
z \\
\end{array}
\right)=
\left(
\begin{array}{c}
f \\
g \\
p \\
h \\
\end{array}
\right),
\end{equation*}

that is verifying
\begin{equation}\label{s50}
\left\{
\begin{array}{ll}
\lambda u-v=f, &  \\
\ds\lambda v+A^{1/2}\big(A^{1/2}u-A^{-1/2}C\theta\big)=g, &  \\
\lambda\theta+\kappa B\big(z(1)+\frac{a}
{\kappa}B^{*}\theta\big)+ C^{*}v=p, &  \\
\ds\lambda z+\frac{1}{\tau}z_\rho=h. &
\end{array}
\right.
\end{equation}
If we have found $u$ with the appropriate regularity, then,
\begin{equation}\label{s51}
v=\lambda u-f.
\end{equation}
To determine z, we set $\ds z(0)=B^{*}u$. Then,
by $(\ref{s50})_{4}$, we get
\begin{equation}\label{s52}
\ds z(\rho)=e^{-\lambda\tau\rho}B^{*}\theta+\tau
e^{-\lambda\tau\rho}\int_{0}^{\rho}h(s)e^{\lambda\tau s}ds.
\end{equation}
In particular, we have
\begin{equation*}\label{s53}
z(1)=e^{-\lambda\tau}B^{*}\theta+z_{0}
\end{equation*}
with
\begin{equation*}\label{s54}
z_{0}=\tau e^{-\lambda\tau}\int_{0}^{1}h(s,.)
e^{\lambda\tau s}ds\in L^{2}\big((0,1),H\big).
\end{equation*}
Taking the inner product of $(\ref{s50})_{2}$
with $\ds x\in D(A^{\frac{1}{2}})$ in $H$ yields
\begin{equation}\label{s55}
\ds\lambda^{2}(u,x)_{H}+(A^{\frac{1}{2}}u,A^{\frac{1}{2}}x)_{H}-
(\theta,C^*x)_{H}=(g+\lambda f,x)_{H}, 
\end{equation}
for all $\ds x\in D(A^{\frac{1}{2}})$.

\medskip

Keeping in mind (\ref{s51}), the inner product of $(\ref{s50})_{3}$
with $\ds y\in D(B^{*})$ in $H$ yields
\begin{equation}\label{s56}
\ds\lambda(\theta,y)_{H}+(\kappa e^{-\lambda\tau}+a)
(B^{*}\theta,B^{*}y)_{H}+
\lambda(C^{*}u,y)_{H}=(p,y)_H+(C^*f,y)_{H}-\kappa( z_{0},B^{*}y)_H.
\end{equation}

 Next, summing (\ref{s56}) and (\ref{s55})
multiplied by $\ds\overline{\lambda}$,
we see that problems (\ref{s55}) and (\ref{s56}) can be formulated as
\begin{equation}\label{s58}
\Phi\big((u,\theta),(x,y)\big)=G(x,y)
\end{equation}
with
\begin{eqnarray*}
\Phi\big((u,\theta),(x,y)\big)&=&\lambda|\lambda|^{2}(u,x)_{H}+
\overline{\lambda}(A^{\frac{1}{2}}u,A^{\frac{1}{2}}x)_{H}-
\overline{\lambda}
(\theta,C^*x)_{H}
\\
&+&\lambda(\theta,y)_{H}+(\kappa e^{-\lambda\tau}+a)
(B^{*}\theta,B^{*}y)_{H}
+\lambda(C^{*}u,y)_{H},
\end{eqnarray*}
and
$$\ds G(x,y)=\overline{\lambda}(\lambda f+g,x)_{H}+(p,y)_H+(C^*f,y)_{H}-\kappa( z_{0},B^{*}y)_H.$$

It is obvious that the sesquilinear form $\Phi$ and the anti-linear form $G$ are continuous on $\mathcal{F}\times\mathcal{F}$ and  $\mathcal{F}$ respectively, where $\mathcal{F}$ is the Hilbert space defined by $\mathcal{F}:=D(A^{\frac{1}{2}})\times D(B^{*})$.

On the other hand, we have, for all $(x,y)\in\mathcal{F}$, 
\begin{eqnarray}
\ds \textrm{Re}\big(\Phi\big((x,y),(u,\theta)\big)\big)&=&
\textrm{Re}(\lambda)|\lambda|^{2}\|x\|_H^{2}+\textrm{Re}(\lambda)\|A^\frac{1}{2}x\|_H^{2}
+\textrm{Re}(\lambda)\|y\|_H^{2}\notag\\&+&
\big(\kappa  e^{-\tau\textrm{Re}(\lambda)}\cos(\textrm{Im}\lambda 
 \tau)+a\big)\|B^{*}y\|_H^{2}.\label{s59}
\end{eqnarray}

\noindent Since $a\geq \kappa$, it follows from (\ref{s59}) that $\Phi$ is coercive
on $\mathcal{F}\times\mathcal{F}$.
By the Lax-Milgram lemma, equation (\ref{s59})
has a unique solution $\ds(u,\theta)\in D(A^{\frac{1}{2}})\times D(B^{*})$.

\medskip

Now, we define $v$ by $v=\lambda u-f$, that belongs to $D(A^{1/2})$ and $z$ by (\ref{s52}), that belongs to $H^1((0,1),H)$. Next, by taking respectively $x=0$ and $y=0$ in (\ref{s58}), we deduce that that,
$$\ds\kappa z(1)+aB^{*}\theta
\in D(B)
\quad\textrm{ and }\quad \lambda\theta+\kappa B\big(z(1)+\frac{a}
{\kappa}B^{*}\theta\big)+ C^{*}v=p,$$
and
$$\ds A^{1/2}u-A^{-1/2}C\theta\in D(A^{1/2})\quad\textrm{ and }\quad
\lambda v+A^{1/2}\big(A^{1/2}u-A^{-1/2}C\theta\big)=g.$$
We conclude that $\lambda Id-\mathcal{A}_{\alpha,\beta}$ is bijective for
all $\lambda\in\mathbb{C}_0$ (in particular, $\mathcal{A}_{\alpha,\beta}$ is m-dissipative)

\medskip

Furthermore $\lambda \in \rho(\mathcal{A}_{\alpha,\beta})$, for every $\lambda \in\mathbb{C}_0$. %Now, it is enough to see that $\lambda Id-\mathcal{A}_{\alpha,\beta}$ is also injective (by taking $(f,g,h,p)=(0,0,0,0)$ in (\ref{3.9'}) we deduce that $U:=(u,v,\theta, z)=(0,0,0,0)$). 
\end{proof}

As a consequence of the m-dissipativness of  $\mathcal{A}_{\alpha,\beta}$ , we have the following result:

\begin{proposition}\label{p4.1}
Assume that $a\geq\kappa$ and let
$\ds\xi\in\mathcal{J}_{a,\kappa,\tau}$.
 Then, the system (\ref{s1}) is well-posed.
More precisely, the operator $\mathcal{A}_{\alpha,\beta}$ generates a
$\mathcal{C}_0$-semigroup of contractions on $\mathcal{H}$.
Thus, for every
$(u_0,v_0,\theta_0,z_0)\in \mathcal{H}$, there exists a unique solution
$(u,v,\theta,z)\in\mathcal{C}\big((0,+\infty),\mathcal{H}\big)$ of (\ref{s7'}).
In addition, if the initial datum
$(u_0,v_0,\theta_0,z_0)\in D(\mathcal{A}_{\alpha,\beta})$,
then the solution satisfies
$(u,u^\prime,\theta,z)\in\mathcal{C}\big((0,+\infty),D(\mathcal{A}_{\alpha,\beta})\big)\cap
\mathcal{C}^{1}\big((0,+\infty),\mathcal{H}\big)$.
\end{proposition}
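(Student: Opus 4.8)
The plan is to invoke the Lumer--Phillips theorem, since the two analytic ingredients it requires have already been supplied by Lemma \ref{lemma22} and Lemma \ref{lemma23}. First I would check that $\mathcal{A}_{\alpha,\beta}$ is densely defined in $\mathcal{H}$; this is routine, as the coupling and regularity constraints cutting out $D(\mathcal{A}_{\alpha,\beta})$ (namely $z(0)=B^*\theta$, $A^{1/2}u-A^{-1/2}C\theta\in D(A^{1/2})$ and $aB^*\theta+\kappa z(1)\in D(B)$) still leave a dense subspace of $\mathcal{H}=D(A^{1/2})\times H\times H\times L^2\big((0,1),H\big)$, the trace condition $z(0)=B^*\theta$ being immaterial for density in the $L^2$ component.

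By Lemma \ref{lemma22}, for $a\geq\kappa$ and $\xi\in\mathcal{J}_{a,\kappa,\tau}$ one has $\textrm{Re}\big(\mathcal{A}_{\alpha,\beta}U,U\big)_{\mathcal{H}}\leq 0$ for every $U\in D(\mathcal{A}_{\alpha,\beta})$, so $\mathcal{A}_{\alpha,\beta}$ is dissipative. By Lemma \ref{lemma23}, $\mathbb{C}_0\subset\rho(\mathcal{A}_{\alpha,\beta})$; in particular any real $\lambda_0>0$ lies in the resolvent set, so $\lambda_0 Id-\mathcal{A}_{\alpha,\beta}$ maps $D(\mathcal{A}_{\alpha,\beta})$ onto all of $\mathcal{H}$, i.e. $\textrm{Range}\big(\lambda_0 Id-\mathcal{A}_{\alpha,\beta}\big)=\mathcal{H}$. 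Density, dissipativity and this range condition are exactly the hypotheses of the Lumer--Phillips theorem, which then yields that $\mathcal{A}_{\alpha,\beta}$ generates a $\mathcal{C}_0$-semigroup of contractions $\big(e^{t\mathcal{A}_{\alpha,\beta}}\big)_{t\geq 0}$ on $\mathcal{H}$.

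The remaining assertions are then the standard consequences of the generation theorem applied to the abstract Cauchy problem (\ref{s7'}). For any initial datum in $\mathcal{H}$ the orbit $U(t)=e^{t\mathcal{A}_{\alpha,\beta}}U(0)$ is the unique mild solution and belongs to $\mathcal{C}\big([0,\infty),\mathcal{H}\big)$, while for $U(0)\in D(\mathcal{A}_{\alpha,\beta})$ it is a classical solution, $U\in\mathcal{C}\big([0,\infty),D(\mathcal{A}_{\alpha,\beta})\big)\cap\mathcal{C}^1\big([0,\infty),\mathcal{H}\big)$.

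I do not anticipate a genuine obstacle here: the substantive analysis --- the Young-inequality dissipativity estimate valid precisely when $\xi\in\mathcal{J}_{a,\kappa,\tau}$, and the Lax--Milgram solvability of the resolvent equation that rests on the coercivity guaranteed by $a\geq\kappa$ in (\ref{s59}) --- was already carried out in the two preceding lemmas, so this proposition is in essence a bookkeeping application of Lumer--Phillips. The only point that is not entirely automatic is the density of $D(\mathcal{A}_{\alpha,\beta})$, on which I would spend the small residual effort.
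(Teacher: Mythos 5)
Your proposal is correct and follows essentially the same route as the paper: the paper likewise deduces the proposition directly from the m-dissipativity of $\mathcal{A}_{\alpha,\beta}$, i.e.\ from the dissipativity estimate of Lemma \ref{lemma22} combined with the surjectivity of $\lambda Id-\mathcal{A}_{\alpha,\beta}$ for $\lambda\in\mathbb{C}_0$ established in Lemma \ref{lemma23}, via the Lumer--Phillips theorem. The only addition on your side is the explicit density check, which is harmless (and in a Hilbert space is in fact automatic for a dissipative operator satisfying the range condition).
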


%\begin{pf}
%By Lemma \ref{lemma22} and Lemma \ref{lemma23}, we deduce
%that $\mathcal{\mathcal{A}_{\alpha,\beta}}$ is maximal dissipative.
%Therefore, the Lumer-Philippes theorem concludes
%the proof.
%\end{pf}

%\vskip0.5cm

\subsubsection{Exponential stability of the delayed coupled system (\ref{s1''})}

\begin{theorem}\rm\label{theorem41"}
Suppose that system (\ref{s1''}) satisfies $(\mathcal{H}_1)$ and $(\mathcal{H}_2)$, where

\medskip

$(\mathcal{H}_2)$:\;\;\;(i)\;\; $D(B^*)\subset D(A^{\frac{1}{2}-2\beta}C)$\;\; and there exists $c>0$ such that $$\|A^{\frac{1}{2}-2\beta}Cv\|_H\leq c\|B^*v\|_{H},\;\;\forall\
v\in D(B^{*}).$$ 
(ii)\;\; $A^{-2\beta}C$ is bounded and $B^{*}(A^{-2\beta}C)^*$ is (or can be extend to a) bounded operator.

  Assume that $a>\kappa$ and let
$\ds\xi \in\mathring{\mathcal{J}}_{a,\kappa,\tau}$.
Then, the semigroup $e^{t\mathcal{A}_{\alpha,\beta}}$  is exponentially stable.
\end{theorem}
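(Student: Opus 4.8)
The plan is to apply the Gearhart--Pr\"uss-type criterion of Lemma \ref{lem3.3}: since $\mathcal{A}_{\alpha,\beta}$ generates a contraction semigroup (Lemma \ref{lemma22} and Lemma \ref{lemma23}), it suffices to verify that $i\mathbb{R}\subseteq\rho(\mathcal{A}_{\alpha,\beta})$ and that the resolvent is uniformly bounded along the imaginary axis, i.e. conditions (\ref{s24''}) and (\ref{s25''}). The whole argument runs parallel to the proof of Theorem \ref{theorem41}, the only novelty being that the scalar powers $A^{\alpha/2}$ and $A^{\beta}$ are now replaced by the operators $B^{*}$ and $C$, so every estimate must be closed using $(\mathcal{H}_1)$ and $(\mathcal{H}_2)$ in place of the algebraic inequalities defining region $S$. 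I would argue (\ref{s25''}) by contradiction: assuming it fails, one obtains real $\lambda_n$ with $|\lambda_n|\to\infty$ and unit vectors $U_n=(u_n,v_n,\theta_n,z_n)^{\top}\in D(\mathcal{A}_{\alpha,\beta})$ with $(i\lambda_n I-\mathcal{A}_{\alpha,\beta})U_n\to 0$ in $\mathcal{H}$, which componentwise reads $i\lambda_n A^{1/2}u_n-A^{1/2}v_n\to0$, $\;i\lambda_n v_n+Au_n-C\theta_n\to0$, $\;i\lambda_n\theta_n+\kappa Bz_n(1)+aBB^{*}\theta_n+C^{*}v_n\to0$, and $i\lambda_n z_n+\frac1\tau\partial_\rho z_n\equiv h_n\to0$.

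First I would extract dissipation: taking the real part of $\big((i\lambda_n I-\mathcal{A}_{\alpha,\beta})U_n,U_n\big)_{\mathcal{H}}$ and using the computation behind Lemma \ref{lemma22} with an admissible $\varepsilon_0$, both coefficients $\frac{\xi}{2\tau}-\frac{\kappa}{2\varepsilon_0}$ and $a-\frac{\xi}{2\tau}-\frac{\varepsilon_0\kappa}{2}$ are strictly positive (this is where $a>\kappa$ and $\xi\in\mathring{\mathcal{J}}_{a,\kappa,\tau}$ enter), so $z_n(1)\to0$ and $B^{*}\theta_n\to0$ in $H$; the coercivity (\ref{coer2}) then forces $\theta_n\to0$. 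Next comes the $u$--$v$ energy balance: pairing the first limit with $\frac{1}{\lambda_n}A^{1/2}u_n$ and the second with $\frac{1}{\lambda_n}v_n$ and adding, the cross terms cancel and I am left with $\|A^{1/2}u_n\|_H^2-\|v_n\|_H^2\to0$, where the coupling term $\frac{1}{\lambda_n}(C\theta_n,v_n)_H=(A^{-1/2}C\theta_n,\frac{1}{\lambda_n}A^{1/2}v_n)_H$ is killed by $(\mathcal{H}_1)$(ii) (so that $\|A^{-1/2}C\theta_n\|_H\le c_2\|B^{*}\theta_n\|_H\to0$) together with the boundedness of $\frac{1}{\lambda_n}A^{1/2}v_n$.

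The decisive step, and the one I expect to be the main obstacle, is to show $\|v_n\|_H\to0$; this is exactly where the ``region $S$'' content of $(\mathcal{H}_2)$ must be calibrated correctly. Mimicking the concrete case, the right test functions are $A^{-2\beta}C\theta_n$ for the $v$-equation and $(A^{-2\beta}C)^{*}v_n=C^{*}A^{-2\beta}v_n$ for the $\theta$-equation (these reduce to $A^{-\beta}\theta_n$ and $A^{-\beta}v_n$ when $C=A^{\beta}$). Pairing the $v$-equation with $A^{-2\beta}C\theta_n$ yields $i\lambda_n(v_n,A^{-2\beta}C\theta_n)_H+(A^{1/2}u_n,A^{1/2-2\beta}C\theta_n)_H-(C\theta_n,A^{-2\beta}C\theta_n)_H\to0$; the middle term vanishes by $(\mathcal{H}_2)$(i) and the boundedness of $A^{1/2}u_n$, while the last is handled by writing $(C\theta_n,A^{-2\beta}C\theta_n)_H=(A^{1/2-2\beta}C\theta_n,A^{-1/2}C\theta_n)_H$ and applying Cauchy--Schwarz with $(\mathcal{H}_2)$(i) and $(\mathcal{H}_1)$(ii), so $i\lambda_n(v_n,A^{-2\beta}C\theta_n)_H\to0$. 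Pairing the $\theta$-equation with $C^{*}A^{-2\beta}v_n$ and using $CC^{*}=A^{2\beta}$ produces the wanted term $(C^{*}v_n,C^{*}A^{-2\beta}v_n)_H=(A^{2\beta}v_n,A^{-2\beta}v_n)_H=\|v_n\|_H^2$; the delay contribution $\kappa(Bz_n(1),C^{*}A^{-2\beta}v_n)_H=\kappa(z_n(1),B^{*}(A^{-2\beta}C)^{*}v_n)_H$ and the damping contribution $a(BB^{*}\theta_n,C^{*}A^{-2\beta}v_n)_H=a(B^{*}\theta_n,B^{*}(A^{-2\beta}C)^{*}v_n)_H$ both tend to $0$ thanks to the boundedness of $B^{*}(A^{-2\beta}C)^{*}$ postulated in $(\mathcal{H}_2)$(ii) together with $z_n(1)\to0$ and $B^{*}\theta_n\to0$, while the remaining term reduces by conjugation to the already-established $i\lambda_n(v_n,A^{-2\beta}C\theta_n)_H\to0$. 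Hence $\|v_n\|_H^2\to0$.

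Finally, $\|v_n\|_H\to0$ combined with the energy balance gives $\|A^{1/2}u_n\|_H\to0$; integrating the transport equation $i\lambda_n z_n+\frac1\tau\partial_\rho z_n=h_n$ with boundary value $z_n(0)=B^{*}\theta_n\to0$ and $h_n\to0$ yields $z_n\to0$ in $L^{2}\big((0,1),H\big)$. Thus $\|U_n\|_{\mathcal{H}}\to0$, contradicting $\|U_n\|_{\mathcal{H}}=1$, so (\ref{s25''}) holds. For (\ref{s24''}) I would argue again by contradiction: if some $i\lambda\in i\mathbb{R}$ were not in $\rho(\mathcal{A}_{\alpha,\beta})$, then, since $0\in\rho(\mathcal{A}_{\alpha,\beta})$ (obtained by a direct inversion at $\lambda=0$, as in Lemma \ref{lemma23'}) and the resolvent set is open, one produces a sequence with $\lambda_n\to\lambda\neq0$ and $(i\lambda_n I-\mathcal{A}_{\alpha,\beta})U_n\to0$; the estimates above never used $|\lambda_n|\to\infty$, so the same reasoning forces $\|U_n\|_{\mathcal{H}}\to0$, a contradiction. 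This establishes both hypotheses of Lemma \ref{lem3.3} and proves exponential stability.
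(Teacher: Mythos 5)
Your proposal is correct and follows essentially the same route as the paper: a frequency-domain contradiction argument in which the dissipation identity yields $z_n(1)\to 0$ and $B^{*}\theta_n\to 0$, the $u$--$v$ pairing yields $\|A^{1/2}u_n\|_H^{2}-\|v_n\|_H^{2}\to 0$, and the decisive step uses exactly the paper's test functions $A^{-2\beta}C\theta_n$ and $(A^{-2\beta}C)^{*}v_n$ together with $(\mathcal{H}_1)$--$(\mathcal{H}_2)$ to force $\|v_n\|_H\to 0$. The only cosmetic difference is that you invoke the imaginary-axis criterion (Lemma \ref{lem3.3}) with real $\lambda_n$ while the paper invokes the half-plane criterion (Lemma \ref{lemma exp.stab}) with $\mathrm{Re}(\lambda_n)\geq 0$; these are interchangeable for this contraction semigroup, and your explicit verification of $i\mathbb{R}\subset\rho(\mathcal{A}_{\alpha,\beta})$ via a direct inversion at $\lambda=0$ is in fact more careful than the paper's appeal to Lemma \ref{lemma23}, which only covers the open half-plane.
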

\begin{remark} \label{re3.2}
    \begin{enumerate}
    \item For $\beta\geq \frac{1}{4}$, a sufficient condition for (i) (of $(\mathcal{H}_2)$)  to hold is: $D(B^*)\subset D(C)$\;\; and there exists $c>0$ such that $$\|Cv\|_H\leq c\|B^*v\|_{H},\;\;\forall\
v\in D(B^{*}).$$  
        \item Note that $A^{-2\beta}C=(C^*A^{-2\beta})^*$ (because $C$ is densely defined), then to prove that $A^{-2\beta}C$ is bounded, it suffices to prove that $C^*A^{-2\beta}$ is bounded (or equivalently, $D(A^{2\beta}) \subset D(C^*))$. 
    \end{enumerate}
\end{remark}
\begin{proof}[Proof of Theorem \ref{theorem41"}]
According to Lemma \ref{lemma23} the condition (\ref{s24})
is satisfied. 

\medskip

Now, we prove that condition (\ref{s25})
in Lemma \ref{lemma exp.stab} is satisfied. Suppose that (\ref{s25}) is not satisfied, then
there exists a sequence of complex numbers $\lambda_n$ such that
$\textrm{Re}(\lambda_n)\geq0$, $|\lambda_n|\rightarrow+\infty$
and a sequence
of unit vectors $U_n=(u_n,v_n,\theta_n,z_n)^\top\in D(\mathcal{A}_{\alpha,\beta})$
with
\begin{equation}\label{s60}
\underset{n\rightarrow+\infty}\lim\|\big(\lambda_n I-\mathcal{A}_{\beta,\alpha}\big)U_n \|_H=0,
\end{equation}

\noindent that is

\begin{equation}\label{s61}
\|A^{\frac{1}{2}}u_n\|_H^{2}+\|v_n\|_H^{2}+\|\theta_n\|_H^{2}+\xi\int_{0}^{1}\|z_n(\rho)\|_Hd\rho=1,
\quad\forall n\in\N
\end{equation}

and

\begin{equation}\label{s62}
\ds\lambda_n A^{\frac{1}{2}}u_n-A^{\frac{1}{2}}v_n\rightarrow0 \textrm{ in } H,
\end{equation}

\begin{equation}\label{s63}
\ds \lambda_n v_n+A^{1/2}\big(A^{1/2}u_n-A^{-1/2}C\theta_n\big)\rightarrow0 \textrm{ in } H,
\end{equation}

\begin{equation}\label{s64}
\ds\lambda_n\theta_n+ B
\big(\kappa z_n(1)+aB^{*}\theta_n\big)
+ C^{*}v_n\rightarrow0 \textrm{ in } H,
\end{equation}

\begin{equation}\label{s65}
\ds\lambda_nz_n+\frac{1}{\tau}\partial_{\rho}z_n\equiv
h_n\rightarrow0 \textrm{ in }L^2\big((0,1)H\big).
\end{equation}
As in the proof of Theorem \ref{theorem41}, we find that
\begin{equation}\label{s66}
z_n(1)\underset{n\rightarrow+\infty}\rightarrow0\quad \text{ and }\quad
B^{*}\theta_n\underset{n\rightarrow+\infty}\rightarrow0\textrm{ in }H,
\end{equation}
\begin{equation}\label{s67}
\theta_n\underset{n\rightarrow+\infty}\rightarrow0\textrm{ in }H.
\end{equation}
\begin{equation}
Re(\lambda_n)\underset{n\rightarrow+\infty}\rightarrow0,
\end{equation}
and 
Moreover
\begin{equation}\label{s83}
\ds\|A^{\frac{1}{2}}u_n\|_H^{2}-\|v_n\|_H^{2}
\underset{n\rightarrow+\infty}\rightarrow0.
\end{equation}
(Since $Re(\lambda_n)\rightarrow0$ and $|\lambda_n|\rightarrow\infty$, then without loss of generality we have supposed that that $\textrm{Im}(\lambda_n)\rightarrow \infty$.)
\begin{lemma}
\begin{equation}\label{s83""}
\|v_n\|_H^{2}
\underset{n\rightarrow+\infty}\rightarrow0\textrm{ in }H,
\end{equation}
\end{lemma}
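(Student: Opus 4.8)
The plan is to transpose the argument that established $(\ref{s83"""})$ in the proof of Theorem \ref{theorem41} to the abstract coupling operators $B,C$, the point being that the fractional powers $A^{-\beta}$ used there are replaced by $A^{-2\beta}C$ and its adjoint. Indeed, in the concrete situation $C=A^{\beta}$ one has $A^{-\beta}\theta_n=A^{-2\beta}C\theta_n$ and $A^{-\beta}v_n=(A^{-2\beta}C)^{*}v_n$, so these two vectors are the natural test functions here. Throughout I use the facts already secured above, namely $z_n(1)\to 0$, $B^{*}\theta_n\to 0$, $\theta_n\to 0$, together with the structural identities $CC^{*}=A^{2\beta}$ and $A^{\alpha}=BB^{*}$, and hypotheses $(\mathcal{H}_1)$, $(\mathcal{H}_2)$. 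The first preliminary observation is that $CC^{*}=A^{2\beta}$ forces $C^{*}A^{-\beta}$ to be an isometry, hence $A^{-\beta}C$ (and a fortiori $A^{-2\beta}C$) extends to a bounded operator on $H$.

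First I would take the inner product of $(\ref{s63})$ with $A^{-2\beta}C\theta_n$. Moving one factor $A^{1/2}$ onto the test vector produces three terms. The term $(A^{1/2}u_n,A^{1/2-2\beta}C\theta_n)_H$ tends to $0$ because $A^{1/2}u_n$ is bounded while, by $(\mathcal{H}_2)(i)$, $\|A^{1/2-2\beta}C\theta_n\|_H\le c\,\|B^{*}\theta_n\|_H\to 0$; the remaining term equals $\|A^{-\beta}C\theta_n\|_H^{2}$, which is bounded by $\|\theta_n\|_H^{2}\to 0$ by the contraction property just noted. This leaves
\begin{equation*}
\lambda_n\big(v_n,A^{-2\beta}C\theta_n\big)_H\underset{n\rightarrow+\infty}\rightarrow 0 .
\end{equation*}

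Next I would take the inner product of $(\ref{s64})$ with $(A^{-2\beta}C)^{*}v_n=C^{*}A^{-2\beta}v_n$. The coupling term yields $(C^{*}v_n,C^{*}A^{-2\beta}v_n)_H$, which equals $\|v_n\|_H^{2}$ once one uses $v_n\in D(C^{*})$, $A^{-2\beta}v_n\in D(CC^{*})=D(A^{2\beta})$ and $CC^{*}=A^{2\beta}$. The damping/delay contribution is $(\kappa z_n(1)+aB^{*}\theta_n,\,B^{*}C^{*}A^{-2\beta}v_n)_H$, and it tends to $0$ since $\kappa z_n(1)+aB^{*}\theta_n\to 0$ while $B^{*}C^{*}A^{-2\beta}=B^{*}(A^{-2\beta}C)^{*}$ is bounded by $(\mathcal{H}_2)(ii)$, so $B^{*}C^{*}A^{-2\beta}v_n$ stays bounded. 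Finally, the cross term rewrites as $\lambda_n(\theta_n,C^{*}A^{-2\beta}v_n)_H=\lambda_n\,\overline{(v_n,A^{-2\beta}C\theta_n)_H}$, whose modulus equals $\big|\lambda_n(v_n,A^{-2\beta}C\theta_n)_H\big|$ (because $|\lambda_n|=|\overline{\lambda_n}|$) and therefore tends to $0$ by the first step. Collecting the three contributions gives $\|v_n\|_H^{2}\to 0$, which is precisely $(\ref{s83""})$.

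The hard part will not be the analysis but the operator bookkeeping: I must verify that $A^{-\beta}C$ and $A^{-2\beta}C$ genuinely extend to bounded operators, that $\theta_n\in D(C)$ and $C^{*}A^{-2\beta}v_n\in D(B^{*})$ so that the two pairings and the adjoint manipulations are legitimate, and that the identity $(C^{*}v_n,C^{*}A^{-2\beta}v_n)_H=\|v_n\|_H^{2}$ holds in the weak sense. These are exactly the statements packaged in $(\mathcal{H}_1)$ and $(\mathcal{H}_2)$ — the clause in $(\mathcal{H}_2)(ii)$ allowing a bounded extension is what handles the possible failure of $C^{*}A^{-2\beta}v_n$ to lie in $D(B^{*})$, by working on a dense set and passing to the closure. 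Once these boundedness facts are in place, the scheme is a faithful translation of the self-adjoint computation behind $(\ref{s83"""})$.
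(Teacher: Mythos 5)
Your proposal is correct and follows essentially the same route as the paper: pairing (\ref{s63}) with $A^{-2\beta}C\theta_n$ to obtain $\lambda_n(v_n,A^{-2\beta}C\theta_n)_H\to 0$, then pairing (\ref{s64}) with $(A^{-2\beta}C)^{*}v_n$ and invoking $CC^{*}=A^{2\beta}$, $z_n(1)\to 0$, $B^{*}\theta_n\to 0$ and the boundedness of $B^{*}(A^{-2\beta}C)^{*}$ from $(\mathcal{H}_2)$ to conclude $\|v_n\|_H^{2}\to 0$. The only (harmless) deviation is cosmetic: you dispose of the term $(A^{-1/2}C\theta_n,A^{1/2-2\beta}C\theta_n)_H$ via the contraction property of $A^{-\beta}C$ deduced from $CC^{*}=A^{2\beta}$, where the paper simply uses that this factor is bounded by $(\mathcal{H}_1)$ while $A^{1/2-2\beta}C\theta_n\to 0$ by $(\mathcal{H}_2)(i)$.
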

\begin{proof}

Using $(\mathcal{H}_2)$  we have that $\|A^{\frac{1}{2}-2\beta}Cv\|\leq \|B^{*}v\|$

Taking the inner product of (\ref{s63}) with $A^{-2\beta}C\theta_n$, then using that $A^{\frac{1}{2}}u_n$ is bounded, we get
\begin{equation}\label{s83''}
\lambda_n(v_n,A^{-2\beta}C\theta_n)_H
\underset{n\rightarrow+\infty}\rightarrow0\textrm{ in }H.
\end{equation}
Taking the inner product of (\ref{s64}) with $(A^{-2\beta}C)^*v_n$, then using that $B^{*}\theta_n=o(1)$, $z_n(1)=o(1)$, $B^{*}(A^{-2\beta}C)^*$ is bounded, $v_n$ is bounded, and taking into account (\ref{s83''}), we deduce  
$$\|v_n\|_H^{2}
\underset{n\rightarrow+\infty}\rightarrow0\textrm{ in }H.$$

\end{proof}
By (\ref{s83}) we have also
$$\|A^{1/2}u_n\|_H^{2}
\underset{n\rightarrow+\infty}\rightarrow0\textrm{ in }H,$$
By integration of the identity (\ref{s65}) and using (\ref{s66}), we get
\begin{equation*}
\ds\|z_n\|_{L^{2}\big((0,1),H\big)}^{2}\rightarrow0.
\end{equation*}

\noindent All together, we have shown that $\{U_{n}\}$ converges to $0$
in $\mathcal{H}$. This clearly contradicts (\ref{s61}).
Thus, condition (\ref{s25}) holds and
Theorem \ref{theorem41"} is then proved.
\end{proof}
\subsubsection{A particular case}

 We focus on the second formulation (\ref{s1''}) by taking $C=C^*=A^\beta$ with $\beta \leq \frac{1}{2}$. then instead of $A^{-1/2}C$ we take $A^{\beta-\frac{1}{2}}$ which is bounded.
 
 Then, 
 $$\mathcal{A}_{\alpha,\beta}
\left(
\begin{array}{c}
u \\
v \\
\theta \\
z \\
\end{array}
\right)=
\left(
\begin{array}{c}
v \\
-A^{1/2}\big( A^{1/2}u-A^{\beta-\frac{1}{2}}\theta\big)  \\
\ds-\kappa B\big(z(1)+\frac{a}
{\kappa}B^*\theta\big)- A^{\beta}v \\
\ds-\frac{1}{\tau}z_{\rho} \\
  \end{array}
\right),$$
and
$$D(\mathcal{A}_{\alpha,\beta})=\left\{
\begin{array}{c}
(u,v,\theta,z)^{\top}\in D(A^{\frac{1}{2}})\times D(A^{\frac{1}{2}})
\times D(B^{*})\times H^{1}\big((0,1),H\big): \quad z(0)=B^*\theta, \\
\ds   A^{1/2}u-A^{\beta-\frac{1}{2}}\theta\in D(A^{1/2})  \quad\textrm{ and }\quad z(1)+\frac{a}
{\kappa}B^*\theta\in D(B)
\end{array}
\right\}.$$ 

The result in Proposition \ref{p4.1} is preserved under the condition
$(\mathcal{H}_1^\prime)$:\;\;\;  There exists $c_1 > 0$ such that $$\|B^{*}v\|_H\geq c_1\|v\|_{H},\;\;\forall\
v\in D(B^{*}).$$
Moreover, under the condition

\medskip

$(\mathcal{H}_2^\prime):$\;\;\;(i)\;\;
$D(B^*)\subset D(A^{\frac{1}{2}-\beta})$\;\; and there exists $c>0$ such that $$\|(A^{\frac{1}{2}-\beta})v\|_H\leq c\|B^*v\|_{H},\;\;\forall\
v\in D(B^{*}),$$

(ii)\;\;  $B^{*}A^{-\beta}$ bounded, equivalently, $D(A^\beta)\subset D(B^*)$,

\medskip

 the result in Theorem \ref{theorem41"} is preserved. 
 
 Note here that property $(\mathcal{H}_1^\prime)$ is a consequence of property (i) in $(\mathcal{H}_2^\prime)$.  Moreover,
 we have the following result
\begin{theorem}
Taking $C=C^*=A^\beta$ with $\beta \leq \frac{1}{2}$ and so $\mathcal{A}_{\alpha,\beta}$ is defined as above. Suppose that $D(B^*)= D(A^{\alpha/2})$\;\; and $\|(A^{\alpha/2})v\|_H\leq c\|B^*v\|_{H},\;\;\forall\
v\in D(B^{*})=D(A^{\alpha/2}).$ Then  the semigroup $e^{t\mathcal{A}_{\alpha,\beta}}$ has the following stability properties:

(i)  In $S $, it is exponentially  stable.

(ii) In $S_1 $, it is polynomially stable of order $\frac{1}{2(\alpha-2\beta)}$.

(iii) In $S_2$, it is polynomially stable of order $\frac{1}{2-2(\alpha+2\beta)}$. 
\end{theorem}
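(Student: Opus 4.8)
The plan is to reduce each of the three regimes to machinery already established, the crucial preliminary observation being that $A^{\alpha}=BB^{*}$ together with $D(B^{*})=D(A^{\alpha/2})$ forces the exact identity
$$\|B^{*}v\|_{H}^{2}=(BB^{*}v,v)_{H}=(A^{\alpha}v,v)_{H}=\|A^{\alpha/2}v\|_{H}^{2},\qquad v\in D(A^{\alpha/2}).$$
Hence $B^{*}A^{-\alpha/2}$ extends to an isometry of $H$ and its adjoint $A^{-\alpha/2}B$ is a bounded operator. This single fact is what permits $B$ and $B^{*}$ to be substituted for $A^{\alpha/2}$ in every frequency-domain estimate below, and it is what makes the present particular case fall within the scope of the abstract results.

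For (i) I would only verify the specialized hypotheses $(\mathcal{H}_1')$ and $(\mathcal{H}_2')$ of the preceding subsection and then quote Theorem \ref{theorem41"}. With $C=C^{*}=A^{\beta}$ and $\beta\le\frac12$ one has $A^{\frac12-2\beta}C=A^{\frac12-\beta}$ and $A^{-2\beta}C=A^{-\beta}$. The inclusion $D(B^{*})=D(A^{\alpha/2})\subset D(A^{\frac12-\beta})$ with $\|A^{\frac12-\beta}v\|_{H}\le c\|B^{*}v\|_{H}$ holds precisely when $\frac\alpha2\ge\frac12-\beta$, i.e. $\alpha\ge1-2\beta$, and the boundedness of $B^{*}A^{-\beta}$ (equivalently $D(A^{\beta})\subset D(A^{\alpha/2})$) holds precisely when $\beta\ge\frac\alpha2$, i.e. $\alpha\le2\beta$. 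These two inequalities are exactly the defining constraints of the region $S$, so Theorem \ref{theorem41"} applies directly and yields exponential stability.

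For (ii) and (iii) I would run the Borichev--Tomilov criterion (Lemma \ref{lem3.4}) with $\gamma=2(\alpha-2\beta)$ in $S_{1}$ and $\gamma=2-2(\alpha+2\beta)$ in $S_{2}$, mimicking the contradiction scheme of Theorem \ref{theorem41'}. Negating (\ref{s25'''}) produces $\lambda_n\in\mathbb{R}$ with $|\lambda_n|\to\infty$ and unit vectors $U_n=(u_n,v_n,\theta_n,z_n)^{\top}$ with $|\lambda_n|^{\gamma}(\mathbf{i}\lambda_n-\mathcal{A}_{\alpha,\beta})U_n\to0$; dissipativity (Lemma \ref{lemma22}) together with the choice $\xi\in\mathring{\mathcal{J}}_{a,\kappa,\tau}$ gives $|\lambda_n|^{\gamma/2}z_n(1)\to0$ and $|\lambda_n|^{\gamma/2}B^{*}\theta_n\to0$, whence $|\lambda_n|^{\gamma/2}A^{\alpha/2}\theta_n\to0$ and $|\lambda_n|^{\gamma/2}\theta_n\to0$ by the identity above. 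The decisive step is to apply $|\lambda_n|^{-\gamma/2}A^{-\alpha/2}$ to the scaled third equation $|\lambda_n|^{\gamma}\big(\mathbf{i}\lambda_n\theta_n+\kappa Bz_n(1)+aA^{\alpha}\theta_n+A^{\beta}v_n\big)\to0$: boundedness of $A^{-\alpha/2}B$ annihilates the $\kappa Bz_n(1)$ contribution under the weight, the term $aA^{\alpha/2}\theta_n$ is absorbed, and one is left with $\mathbf{i}\lambda_n|\lambda_n|^{\gamma/2}A^{-\alpha/2}\theta_n+|\lambda_n|^{\gamma/2}A^{\beta-\alpha/2}v_n\to0$, which is exactly equation (\ref{s64bis"'}), i.e. the equations (2.7) and (2.30) of \cite{HaLi13}. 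From this point the estimate $\|v_n\|_{H}\to0$ is imported verbatim from \cite{HaLi13}; then $\|A^{1/2}u_n\|_{H}\to0$ follows as in (\ref{s83"'})--(\ref{s83bis"}), and $\|z_n\|_{L^{2}((0,1),H)}\to0$ by integrating the fourth equation, contradicting $\|U_n\|_{\mathcal{H}}=1$. Finally $i\mathbb{R}\subset\rho(\mathcal{A}_{\alpha,\beta})$ follows from $0\in\rho(\mathcal{A}_{\alpha,\beta})$ (Lemma \ref{lemma23}) by the same contradiction argument applied to a sequence $\lambda_n\to\lambda\neq0$, which completes (ii) and (iii).

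I expect the main obstacle to be precisely the transfer of the Hao--Liu reduction: in \cite{HaLi13} the argument is carried out with $B=A^{\alpha/2}$ literally, whereas here only $BB^{*}=A^{\alpha}$ and $D(B^{*})=D(A^{\alpha/2})$ are available, so one must justify with care that acting with $A^{-\alpha/2}$ is meaningful on $\mathrm{ran}\,B$ and that it eliminates the $Bz_n(1)$ term under the polynomial weight. This rests entirely on the isometry $\|B^{*}v\|_{H}=\|A^{\alpha/2}v\|_{H}$ and the consequent boundedness of $A^{-\alpha/2}B$; once that is secured, the remaining estimates are routine bookkeeping identical to the proofs of Theorems \ref{theorem41"} and \ref{theorem41'}.
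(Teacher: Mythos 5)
Your proposal is correct and follows essentially the same route as the paper: part (i) is settled by verifying $(\mathcal{H}_1^\prime)$ and $(\mathcal{H}_2^\prime)$, which under your hypotheses reduce exactly to the defining inequalities $\alpha\geq 1-2\beta$ and $\alpha\leq 2\beta$ of the region $S$, and then invoking Theorem \ref{theorem41"}, while (ii)--(iii) repeat the contradiction argument of Theorem \ref{theorem41'} with $B^*$ playing the role of $A^{\alpha/2}$. Your isometry identity $\|B^*v\|_H=\|A^{\alpha/2}v\|_H$ on $D(B^*)=D(A^{\alpha/2})$ and the consequent boundedness of $A^{-\alpha/2}B$ is precisely the ``slight modification'' that the paper leaves implicit, so your write-up is, if anything, a more explicit justification of the same argument.
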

\begin{proof}
The justification for (i) is  explained above. For (ii) and (iii), it suffices to repeat the proof of Theorem \ref{theorem41'} with a slight modification.
\end{proof}
\subsection{Applications} 
\subsubsection{A thermoelastic beam 1}
We consider the one dimensional delayed thermoelastic  beam model in $(0,L)$, $L>0$:

\begin{equation}\label{exp33}
\left\{
\begin{array}{ll}
u_{tt}(t,x)+u_{xxxx}(x,t)-\theta_{xx}(x,t)=0, &
\quad (x,t)\in(0,L)\times(0,+\infty), \\
\theta_{t}(x,t)-\kappa\theta_{xx}(x,t-\tau)-a\theta_{xx}(x,t)+u_{txx}(x,t)=0, &
\quad(x,t)\in(0,L)\times(0,+\infty), \\
u(0,t)=u(L,t)=u_{xx}(0,t)=u_{xx}(L,t)=0, & \quad t\in(0,+\infty), \\
u(x,0)=u^0(x),u_t(x,0)=u^1(x), & \quad t\in(0,\tau), \\
\theta(0,t)=\theta(L,t)=0, & \quad t\in(0,+\infty), \\
\theta(x,0)=\theta^0(x), &  \\
\theta_x(x,t)=g_0(x,t), &\quad-\tau\leq t<0, x\in (0,L),
\end{array}
\right.
\end{equation}
where $L$ and $a$ are real positive constants.

\medskip

It is a concrete example of the last related system, with $H=L^2(0,L)$, $A=\frac{\partial^4}{\partial x^4}:D(A)=H_0^2(0,L)\cap H^4(0,L)\rightarrow L^2(0,L)$, $\alpha=\beta=\frac{1}{2}$,   $A^\alpha=A^\beta=A^{1/2}=-\frac{\partial^2}{\partial x^2}:D(A^{1/2})=H_0^1(0,L)\cap H^2(0,L)\rightarrow L^2(0,L)$,  $B=-\frac{\partial}{\partial x}:D(B)=H^1(0,L)\rightarrow L^2(0,L)$, $B^*=\frac{\partial}{\partial x}:D(B^*)=H_0^1(0,L)\rightarrow L^2(0,L)$. We have that $A^{\alpha}=A^{1/2}=BB^*$, $D(A^{1/4})=D(B^*)=H_0^1(0,L)$ and $\|(A^{1/4})v\|_H=\|B^*v\|_{H},\;\;\forall\
v\in D(B^{*})=H_0^1(0,L)$.

\medskip

The operator $\mathcal{A}_{\frac{1}{2},\frac{1}{2}}$ is given by
\begin{equation*}
\ds\mathcal{A}_{\frac{1}{2},\frac{1}{2}}
\left(
\begin{array}{c}
u \\
v \\
\theta \\
z \\
\end{array}
\right)=
\left(
\begin{array}{c}
v \\
\ds\left( -u_{xx}+\theta\right)_{xx}  \\
\kappa\big(z(\cdot,1)+\frac{a}{\kappa}\theta_{x}\big)_x-v_{xx} \\
\ds-\frac{1}{\tau}z_{\rho} \\
  \end{array}
\right),
\end{equation*}
with domain
\begin{equation*}
D(\mathcal{A}_{\frac{1}{2},\frac{1}{2}})=\left\{
\begin{array}{c}
(u,v,\theta,z)^{\top}\in \left( H_{0}^{1}\big(0,L\big) \cap H^{2}\big(0,L\big)\right) \times \left( H_{0}^{1}\big(0,L\big)\cap H^{2}\big(0,L\big)\right)
\times  H_{0}^{1}\big(0,L\big) \times H^{1}\big((0,1),L^{2}(0,L)\big): \\
\ds \quad z(\cdot,0)=\theta_x,\quad -u_{xx}+\theta\in H_{0}^{1}\big(0,L\big)\cap H^2\big(0,L\big) \quad\textrm{ and }\quad z(\cdot,1)+\frac{a}{\kappa}\theta_x\in H^{1}\big(0,L\big)
\end{array}
\right\},
\end{equation*}
in the Hilbert space
\begin{equation*}
\mathcal{H}= \left(H_{0}^{1}\big(0,L\big)\cap H^{2}\big(0,L\big)\right) \times
L^{2}\big(0,L\big) \times L^{2}\big(0,L\big)\times
L^{2}\big((0,L)\times (0,1)\big),
\end{equation*}

%\noindent In order to set the abstract formulation of system (\ref{s37}),
%we define

%$$H=L^{2}\big((0,L)\big),\quad\quad A=-\frac{d}{dx^{2}},
%\quad\quad D(A)=H^{2}\big((0,L)\big).$$
%Let's recall that $A$ is a strictly positive self adjoint operator and that
%$$\ds D(A^{\frac{1}{2}})=H_{0}^{1}\big((0,L)\big).$$

Then, the Cauchy problem 
\begin{equation*}
\left\{
\begin{array}{ll}
U^\prime (t) =\mathcal{A}_{\frac{1}{2},\frac{1}{2}}U(t), \, t > 0, & \\
\ds U(0)=\big(u_{0},u_{1},\theta_{0},g_0(\cdot, -\tau \cdot)\big)^{\top},
\end{array}
\right.
\end{equation*}
is well-posed in the Hilbert space $\mathcal{H}$.

\begin{theorem}
If $a> \kappa$ and $\xi \in \mathring{\mathcal{J}}_{a,\kappa,\tau}$, the system (\ref{exp33}) is exponentially stable.%, the energy
%$$\ds E(t)=\frac{1}{2}\left(\int_{0}^{L}
%\big(|u_{t}(x,t)|^{2}+|u_{x}(x,t)|^{2}+|\theta(x,t)|^{2}\big)dx
%+\xi\int_{0}^{L}\int_{0}^{1}|u_{x}(x,t-\tau\rho)|^{2}d\rho dx\right),$$
%satisfies
%$$E(t)\leq Ce^{-wt}E(0),\quad\forall t\geq0,$$
%for some positive constants $C$ and $w$.
\end{theorem}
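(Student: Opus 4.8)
The plan is to exhibit system (\ref{exp33}) as the concrete realization of the abstract system (\ref{s1''}) in the particular case $C=C^*=A^\beta$, and then to invoke part (i) of the corresponding stability theorem (the last theorem of the preceding subsection). First I would fix the functional setting already indicated: $H=L^2(0,L)$, $A=\partial_x^4$ with $D(A)=H_0^2(0,L)\cap H^4(0,L)$, and $\alpha=\beta=\tfrac12$, so that $A^{1/2}=A^\alpha=A^\beta=-\partial_x^2$ on $H_0^1(0,L)\cap H^2(0,L)$. I set $C=C^*=A^{1/2}=-\partial_x^2$ (so that $CC^*=A=A^{2\beta}$, as required), and $B=-\partial_x$, $B^*=\partial_x$ with $D(B^*)=H_0^1(0,L)$. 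The structural identity $A^\alpha=A^{1/2}=(-\partial_x)(\partial_x)=BB^*$ then holds, matching the hypothesis $A^\alpha=BB^*$ of the abstract model.

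Next I would check that the exponent pair lies in the exponential region: for $(\beta,\alpha)=(\tfrac12,\tfrac12)$ one has $\max\{1-2\beta,\,2\beta-1\}=0\le\alpha=\tfrac12\le 2\beta=1$, so $(\tfrac12,\tfrac12)\in S$. This is what selects part (i) (exponential stability) rather than the polynomial alternatives in $S_1\cup S_2$.

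The core verification is that the operators satisfy the two explicit hypotheses of the particular-case theorem, namely $C=C^*=A^\beta$ with $\beta\le\tfrac12$ (clear, since $\beta=\tfrac12$), and $D(B^*)=D(A^{\alpha/2})$ together with $\|A^{\alpha/2}v\|_H\le c\,\|B^*v\|_H$. Here $A^{\alpha/2}=A^{1/4}$, and by interpolation $D(A^{1/4})=[L^2(0,L),\,D(A^{1/2})]_{1/2}=H_0^1(0,L)=D(B^*)$; moreover, for $v\in H_0^1(0,L)$,
\begin{equation*}
\|A^{1/4}v\|_H^2=(A^{1/2}v,v)_H=(-\partial_x^2 v,v)_H=\|\partial_x v\|_H^2=\|B^*v\|_H^2,
\end{equation*}
so the inequality holds with $c=1$ (in fact as an equality), exactly as recorded in the identification of operators for (\ref{exp33}). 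From this the auxiliary conditions $(\mathcal{H}_1^\prime)$ and $(\mathcal{H}_2^\prime)$ follow at once: coercivity of $B^*$ is the Poincar\'e inequality $\|\partial_x v\|_H\ge c_1\|v\|_H$ on $H_0^1(0,L)$; the operator $A^{1/2-\beta}=A^0=\mathrm{Id}$ is trivially bounded; and since $\beta=\tfrac12$, the inclusion $D(A^\beta)=D(A^{1/2})\subset D(A^{1/4})=D(B^*)$ guarantees that $B^*A^{-\beta}$ extends to a bounded operator.

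Finally, with $a>\kappa$ and $\xi\in\mathring{\mathcal{J}}_{a,\kappa,\tau}$, part (i) of the particular-case theorem applies and yields that the semigroup $e^{t\mathcal{A}_{1/2,1/2}}$ is exponentially stable, which is precisely the asserted exponential stability of (\ref{exp33}). I do not expect a genuine obstacle, since the statement is an application of the abstract theory; the only point requiring care is confirming that the Dirichlet/hinged boundary conditions in (\ref{exp33}) are exactly those encoded in the domains $D(A)$, $D(A^{1/2})$ and $D(B^*)$ above, so that the reduction to (\ref{s1''}) is faithful and the abstract hypotheses are genuinely met.
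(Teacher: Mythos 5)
Your proposal is correct and follows essentially the same route as the paper: the paper likewise treats (\ref{exp33}) as the particular case $C=C^*=A^\beta$ of (\ref{s1''}) with $\alpha=\beta=\tfrac12$, $A=\partial_x^4$, $B^*=\partial_x$, records that $D(B^*)=D(A^{1/4})=H_0^1(0,L)$ with $\|A^{1/4}v\|_H=\|B^*v\|_H$, and then invokes part (i) of the particular-case theorem since $(\tfrac12,\tfrac12)\in S$. The only cosmetic point worth adding is that matching the signs of the coupling terms ($-\theta_{xx}$ and $+u_{txx}$ in (\ref{exp33}) versus $-C\theta$ and $+C^*u'$ with $C=A^\beta=-\partial_x^2$) requires the harmless substitution $\theta\mapsto-\theta$, a normalization the paper also leaves implicit.
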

\subsubsection{A thermoelastic beam 2}

We consider the one dimentional delayed thermoelastic  beam model in $(0,L)$, $L>0$:

\begin{equation}\label{exp22}
\left\{
\begin{array}{ll}
u_{tt}(t,x)+u_{xxxx}(x,t)-\theta(x,t)=0, &
\quad (x,t)\in(0,L)\times(0,+\infty), \\
\theta_{t}(x,t)-\kappa\theta_{xx}(x,t-\tau)-a\theta_{xx}(x,t)+u_{t}(x,t)=0, &
\quad(x,t)\in(0,L)\times(0,+\infty), \\
u(0,t)=u(L,t)=u_{xx}(0,t)=u_{xx}(L,t)=0, & \quad t\in(0,+\infty), \\
u(x,0)=u^0(x),u_t(x,0)=u^1(x), & \quad t\in(0,\tau), \\
\theta(0,t)=\theta(L,t)=0, & \quad t\in(0,+\infty), \\
\theta(x,0)=\theta^0(x), &  \\
\theta_x(x,t)=g_0(x,t), &\quad-\tau\leq t<0, x\in (0,L),
\end{array}
\right.
\end{equation}
where $L$ and $a$ are real positive constants.

It is a second concrete example of the last related system, with $\alpha=\frac{1}{2}$, $\beta=0$, $H=L^2(0,L)$, $A=\frac{\partial^4}{\partial x^4}:D(A)=H_0^2(0,L)\cap H^4(0,L)\rightarrow L^2(0,L)$, then  $A^\alpha=A^{1/2}=-\frac{\partial^2}{\partial x^2}:D(A^{1/2})=H_0^1(0,L)\cap H^2(0,L)\rightarrow L^2(0,L)$, $B=-\frac{\partial}{\partial x}:D(B)=H^1(0,L)\rightarrow L^2(0,L)$, $B^*=\frac{\partial}{\partial x}:D(B^*)=H_0^1(0,L)\rightarrow L^2(0,L)$. We have that $A^{1/2}=BB^*$, $D(A^{1/4})=D(B^*)=H_0^1(0,L)$ and $\|(A^{1/4})v\|_H=\|B^*v\|_{H},\;\;\forall\
v\in D(B^{*})=H_0^1(0,L)$.  

The operator $\mathcal{A}_{\frac{1}{2},0}$ is given by
\begin{equation*}
\ds\mathcal{A}_{\frac{1}{2},0}
\left(
\begin{array}{c}
u \\
v \\
\theta \\
z \\
\end{array}
\right)=
\left(
\begin{array}{c}
v \\
\ds-u_{xxxx}+\theta \\
\kappa\big(z(\cdot,1)+\frac{a}{\kappa}\theta_{x}\big)_x-v \\
\ds-\frac{1}{\tau}z_{\rho} \\
  \end{array}
\right),
\end{equation*}
with domain
\begin{equation*}
D(\mathcal{A}_{0,\frac{1}{2}})=\left\{
\begin{array}{c}
(u,v,\theta,z)^{\top}\in \left( H_{0}^{1}\big(0,L\big) \cap H^{2}\big(0,L\big)\right) \times \left( H_{0}^{1}\big(0,L\big)\cap H^{2}\big(0,L\big)\right)
\times  H_{0}^{1}\big(0,L\big) \times L^{2}\big((0,L),H^{1}(0,1)\big): \\
\ds  u_{xx}\in H_{0}^{1}\big(0,L\big) \cap H^{2}\big(0,L\big),\quad z(\cdot,0)=\theta_x\quad\textrm{ and }\quad z(\cdot,1)+\frac{a}{\kappa}\theta_x\in H^{1}\big(0,L\big)
\end{array}
\right\},
\end{equation*}
in the Hilbert space
\begin{equation*}
\mathcal{H}= \left(H_{0}^{1}\big(0,L\big)\cap H^{2}\big(0,L\big)\right) \times
L^{2}\big(0,L\big)\times L^{2}\big(0,L\big)\times
L^{2}\big((0,L)\times (0,1)\big),
\end{equation*}

%\noindent In order to set the abstract formulation of system (\ref{s37}),
%we define

%$$H=L^{2}\big((0,L)\big),\quad\quad A=-\frac{d}{dx^{2}},
%\quad\quad D(A)=H^{2}\big((0,L)\big).$$
%Let's recall that $A$ is a strictly positive self adjoint operator and that
%$$\ds D(A^{\frac{1}{2}})=H_{0}^{1}\big((0,L)\big).$$

One has the following polynomial stability result.

\begin{theorem}
If $a> \kappa$ and $ \xi \in \mathring{\mathcal{J}}_{a,\kappa,\tau}$, the system (\ref{exp22}) is polynomially stable of order $1$.
\end{theorem}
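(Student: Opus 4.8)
The plan is to recognize system (\ref{exp22}) as a concrete instance of the abstract related system (\ref{s1''}) and then invoke the particular-case theorem proved just above, rather than repeating any resolvent analysis. First I would make the dictionary between the PDE and the abstract operators explicit: with $H = L^2(0,L)$, the operator $A = \partial_x^4$ (with the stated hinged boundary conditions) gives $A^{1/2} = -\partial_x^2$, so $\alpha = \tfrac{1}{2}$; the coupling $-\theta$ in the first equation and $+u_t$ in the second shows $C\theta = \theta$ and $C^* v = v$, i.e. $C = C^* = I = A^0$, which is exactly $C = C^* = A^\beta$ with $\beta = 0 \le \tfrac{1}{2}$. Finally $B = -\partial_x$ and $B^* = \partial_x$ realize $A^{1/2} = BB^*$, so the setting of the particular case is matched with $(\beta,\alpha) = (0,\tfrac{1}{2})$.

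Next I would verify the two structural hypotheses required by that theorem. The identity $A^\alpha = A^{1/2} = BB^*$ holds by the previous line, and since $\alpha = \tfrac{1}{2}$ we have $A^{\alpha/2} = A^{1/4}$. The excerpt already records $D(B^*) = D(A^{1/4}) = H_0^1(0,L)$ and $\|A^{1/4} v\|_H = \|B^* v\|_H$ for every $v \in H_0^1(0,L)$, so the hypothesis $D(B^*) = D(A^{\alpha/2})$ together with $\|A^{\alpha/2} v\|_H \le c\|B^* v\|_H$ is satisfied with $c = 1$. Well-posedness and m-dissipativity then come from Proposition \ref{p4.1} under $a \ge \kappa$ and $\xi \in \mathcal{J}_{a,\kappa,\tau}$, so the semigroup $e^{t\mathcal{A}_{\frac{1}{2},0}}$ is well defined.

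It then remains only to locate $(\beta,\alpha) = (0,\tfrac{1}{2})$ in the stability diagram. This point satisfies $\alpha - 2\beta = \tfrac{1}{2} > 0$ with $\alpha \in [\tfrac{1}{2},1]$, hence $(0,\tfrac{1}{2}) \in S_1$; it equally satisfies $\alpha + 2\beta = \tfrac{1}{2} < 1$ with $\alpha \in [0,\tfrac{1}{2}]$, hence $(0,\tfrac{1}{2}) \in S_2$. The point thus lies on the common boundary $\alpha = \tfrac{1}{2}$ of the two polynomial regions. Applying part (ii) of the particular-case theorem yields polynomial stability of order $\tfrac{1}{2(\alpha - 2\beta)} = \tfrac{1}{2\cdot(1/2)} = 1$, and, reassuringly, part (iii) gives the identical order $\tfrac{1}{2 - 2(\alpha + 2\beta)} = \tfrac{1}{2 - 1} = 1$, so there is no ambiguity at the boundary. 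Under $a > \kappa$ and $\xi \in \mathring{\mathcal{J}}_{a,\kappa,\tau}$ this gives exactly the claimed polynomial stability of order $1$.

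The only genuinely delicate point, and the place where I would be careful, is the matching of the abstract generator $\mathcal{A}_{\frac{1}{2},0}$ and its domain (the boundary data encoded in $D(\mathcal{A}_{\frac{1}{2},0})$, the identity $D(B^*) = H_0^1(0,L)$, and the compatibility relation $z(\cdot,0) = \theta_x$) with the concrete PDE and its delayed history data $\theta_x(x,t) = g_0(x,t)$. Once this identification and the single norm equivalence $\|A^{1/4} v\|_H = \|B^* v\|_H$ are confirmed, no new frequency-domain estimate is needed: the conclusion follows immediately as a corollary of the already-established particular-case theorem.
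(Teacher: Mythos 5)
Your proposal is correct and follows exactly the route the paper intends: system (\ref{exp22}) is identified as the particular case $C=C^*=A^\beta$ with $(\beta,\alpha)=(0,\tfrac12)$, $A^{1/2}=BB^*$, $D(B^*)=D(A^{1/4})=H_0^1(0,L)$ and $\|A^{1/4}v\|_H=\|B^*v\|_H$, after which the particular-case theorem of Subsection~3.3.3 gives the order $\tfrac{1}{2(\alpha-2\beta)}=\tfrac{1}{2-2(\alpha+2\beta)}=1$. Your additional observations — that $(0,\tfrac12)$ lies on the common boundary of $S_1$ and $S_2$ and that both formulas agree there, and that $(\mathcal{H}_1')$ follows from the stated norm identity — are consistent with, and slightly more explicit than, the paper's own (unwritten) justification.
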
 
\subsubsection{A thermoelastic string}
We consider the following system with delay
\begin{equation}\label{exp44}
\left\{
\begin{array}{ll}
u_{tt}(t,x)-u_{xx}(x,t)+\theta_{x}(x,t)=0, &
\quad (x,t)\in(0,L)\times(0,+\infty), \\
\theta_{t}(x,t)-\kappa\theta_{xx}(x,t-\tau)-a\theta_{xx}(x,t)+u_{xt}(x,t)=0, &
\quad(x,t)\in(0,L)\times(0,+\infty), \\
u(0,t)=u(L,t)=0, &  \\
u(x,0)=u^0(x),u_t(x,0)=u^1(x), & \quad t\in(0,\tau), \\
\theta(0,t)=\theta(L,t)=0, & \quad t\in(0,+\infty), \\
\theta(x,0)=\theta^0(x), &  \\
\theta_x(x,t)=g_0(x,t), &\quad-\tau\leq t<0, x\in (0,L),
\end{array}
\right.
\end{equation}
where $L$ and $a$ are real positive constants.

\medskip

We thus find a system considered in \cite{MuKa13}, by considering here Dirichlet conditions instead of Neumann conditions for $\theta$. It is a direct application of the initial related system: 
$H=L^{2}\big(0,L\big)$, $A=-\frac{d^2}{dx^{2}}:
 D(A)=H_0^1(0,L)\cap H^2(0,L)\rightarrow L^2(0,L)$, $\alpha=1$, $\beta=\frac{1}{2}$,  $B=C=\frac{\partial}{\partial x}:D(B)=H^1(0,L)\rightarrow L^2(0,L)$, $B^*=C^*=-\frac{\partial}{\partial x}:D(B^*)=H_0^1(0,L)\rightarrow L^2(0,L)$ and $A=BB^*$. 

Now, $D(A^{1/2})=H^1_0(0,L)=D(C^*)$, $D(B^*)=H^1_0(0,L)\subset D(C)=H^1(0,L)$ and $\|Cv\|_H= \|B^*v\|_{H},\;\;\forall v\in D(B^{*})$. The condition $(\mathcal{H}_1)$ is then satisfied, so system (\ref{exp44}) is well-posed.  

On the other hand, in view of Remark \ref{re3.2}, assumption (i) and the first part of assumption (ii) in $\mathcal{H}_2$ are satisfied.

The second part of assumption (ii) in $\mathcal{H}_2$ is satisfied too, because  for every $v\in \mathcal{C}_0^\infty$, we have $\|A^{-1}CBv\|_H=\|v\|_H$. Hence,
\begin{theorem}
If $a> \kappa$ and $\xi \in \mathring{\mathcal{J}}_{a,\kappa,\tau}$, then the system (\ref{exp44}) is exponentially stable.
\end{theorem}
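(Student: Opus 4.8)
The plan is to recognize system (\ref{exp44}) as a concrete realization of the abstract delayed system (\ref{s1''}) and then to verify that the structural hypotheses $(\mathcal{H}_1)$ and $(\mathcal{H}_2)$ of Theorem \ref{theorem41"} hold for the operators at hand; exponential stability follows at once by invoking that theorem. Concretely, I would fix $H=L^2(0,L)$, $A=-\partial_{xx}$ with $D(A)=H^2(0,L)\cap H_0^1(0,L)$, and take $B=C=\partial_x$, $B^*=C^*=-\partial_x$ with $D(B^*)=H_0^1(0,L)$, so that $\alpha=1$, $\beta=\tfrac12$ and $A^\alpha=A=BB^*$, placing us exactly in the framework of (\ref{s1''}).

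First I would dispose of well-posedness by checking $(\mathcal{H}_1)$. The coercivity estimate (\ref{coer2}) reads $\|B^*v\|_H=\|\partial_x v\|_H\ge c_1\|v\|_H$ and is nothing but the Poincar\'e inequality on $H_0^1(0,L)$. The domain inclusions $D(A^{1/2})=H_0^1(0,L)\subset D(C^*)$ and $D(B^*)=H_0^1(0,L)\subset D(A^{-1/2}C)$, together with the comparison $\|A^{-1/2}Cv\|_H\le c_2\|B^*v\|_H$, are immediate since $A^{-1/2}C$ acts boundedly here and $\|Cv\|_H=\|B^*v\|_H$. By Proposition \ref{p4.1} this already yields a contraction semigroup, hence well-posedness.

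The heart of the argument is the verification of $(\mathcal{H}_2)$, which is what powers the exponential decay. For part (i) I would invoke Remark \ref{re3.2}: since $\beta=\tfrac12\ge\tfrac14$, it suffices that $D(B^*)\subset D(C)$ and $\|Cv\|_H\le c\|B^*v\|_H$, both trivial because $D(B^*)=H_0^1(0,L)\subset H^1(0,L)=D(C)$ and $\|Cv\|_H=\|\partial_x v\|_H=\|B^*v\|_H$. For the first half of part (ii), the boundedness of $A^{-2\beta}C=A^{-1}\partial_x$ follows from the second item of Remark \ref{re3.2}, namely from $D(A)\subset H_0^1(0,L)=D(C^*)$, so that $C^*A^{-1}$, and hence its adjoint $A^{-1}C$, is bounded.

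The step I expect to be the only genuinely delicate point is the second half of $(\mathcal{H}_2)$(ii): the boundedness (or bounded extension) of $B^*(A^{-2\beta}C)^*$. Here the one-dimensional structure produces an exact cancellation, since $B^*C^*=(-\partial_x)(-\partial_x)=\partial_x^2=-A$ (equivalently $CB=-A$), so that $B^*(A^{-1}C)^*=B^*C^*A^{-1}=-\mathrm{Id}$ on $\mathcal{C}_0^\infty(0,L)$, which therefore extends to a bounded operator with $\|A^{-1}CBv\|_H=\|v\|_H$. In a more general abstract setting the verification of precisely this operator-product identity would be the main difficulty. With $(\mathcal{H}_1)$ and $(\mathcal{H}_2)$ in hand and the standing assumptions $a>\kappa$, $\xi\in\mathring{\mathcal{J}}_{a,\kappa,\tau}$, Theorem \ref{theorem41"} applies and yields $\|e^{t\mathcal{A}_{\alpha,\beta}}\|\le Ce^{-wt}$, that is, the exponential stability of (\ref{exp44}).
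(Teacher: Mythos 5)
Your proposal is correct and is essentially the paper's own argument: it realizes (\ref{exp44}) as the abstract system (\ref{s1''}) with $H=L^2(0,L)$, $A=-\partial_{xx}$, $B=C=\partial_x$, $\alpha=1$, $\beta=\tfrac12$, verifies $(\mathcal{H}_1)$ via Poincar\'e and $\|Cv\|_H=\|B^*v\|_H$, verifies $(\mathcal{H}_2)$ through Remark \ref{re3.2} plus the one-dimensional cancellation $CB=-A$ (so that $\|A^{-1}CBv\|_H=\|v\|_H$ for $v\in\mathcal{C}_0^\infty(0,L)$), and then invokes Theorem \ref{theorem41"}. One immaterial blemish: the intermediate claim that $B^*C^*A^{-1}=-\mathrm{Id}$ holds \emph{on} $\mathcal{C}_0^\infty(0,L)$ is not literally correct, because $C^*A^{-1}v=-\partial_x(A^{-1}v)$ need not vanish at the endpoints and hence need not belong to $D(B^*)=H_0^1(0,L)$; the legitimate formulation is the dual one you (and the paper) actually use, namely $A^{-1}CB=-\mathrm{Id}$ on $\mathcal{C}_0^\infty(0,L)$, whose bounded closure yields by adjunction the required bounded extension of $B^*(A^{-1}C)^*$.
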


\section*{Conflict of interest}
The authors declare that they have no conflict of interest.

\section*{Availability of data}
The authors declare that all data in this paper is available.

\end{document}